\documentclass[pdftex,a4paper,12pt]{scrartcl}

\usepackage[T1]{fontenc}
\usepackage{lmodern} 
\usepackage{exscale}

\usepackage[pdftex]{graphicx}
\usepackage{amsmath,amssymb,amsthm}
\usepackage{mathtools}
\usepackage{hyperref}
\usepackage[capitalize]{cleveref}
\usepackage{enumitem}


\usepackage[en-US]{datetime2}

\usepackage{macros}
\usepackage[tikzset={x=.7cm,y=.7cm}]{linkcobs} 

\ifdefined\cprime\else
\def\cprime{'}
\fi

\theoremstyle{plain}
\newtheorem{mainthm}{Main Theorem}
\newtheorem*{maincor}{Corollary}
\newtheorem{theorem}{Theorem}[section]
\newtheorem{proposition}[theorem]{Proposition}
\newtheorem{corollary}[theorem]{Corollary}
\newtheorem{lemma}[theorem]{Lemma}

\theoremstyle{definition}
\newtheorem{definition}[theorem]{Definition}
\newtheorem{problem}[theorem]{Problem}

\theoremstyle{remark}
\newtheorem{example}[theorem]{Example}
\newtheorem{remark}[theorem]{Remark}
\newtheorem{notation}[theorem]{Notation}
\newtheorem*{convention}{Convention}

\numberwithin{equation}{section}
\numberwithin{figure}{section}

\title{A cobordism realizing crossing change on $\mathfrak{sl}_2$ tangle homology and a categorified Vassiliev skein relation}
\date{\DTMdisplaydate{2020}{7}{31}{-1}}
\author{Noboru Ito and Jun Yoshida}


\begin{document}
\maketitle

\begin{abstract}
In this paper, we discuss degree $0$ crossing change on Khovanov homology in terms of cobordisms.
Namely, using Bar-Natan's formalism of Khovanov homology, we introduce a sum of cobordisms that yields a morphism on complexes of two diagrams of crossing change, which we call the ``genus-one morphism.''    
It is proved that the morphism is invariant under the moves of double points in tangle diagrams.
As a consequence, in the spirit of Vassiliev theory, taking iterated mapping cones, we obtain an invariant for singular tangles that extending sl(2) tangle homology; examples include Lee homology, Bar-Natan homology, and Naot's universal Khovanov homology as well as Khovanov homology with arbitrary coefficients.
We also verify that the invariant satisfies categorified analogues of Vassiliev skein relation and the FI relation.
\end{abstract}


\tableofcontents

\section{Introduction}
\label{sec:intro}

The goal of this paper is to realize crossing change on $\mathfrak{sl}_2$ homology, aka Khovanov homology, in the spirit of Vassiliev theory.
More precisely, following Bar-Natan's formalism~\cite{BarNatan2005}, we discuss a formal sum of two cobordisms.
We prove that it induces a morphism on Khovanov complexes which is of bidegree $(0,0)$ and invariant under moves of double points (\cref{main:genus1}).
Thanks to this invariance, the morphism extends Khovanov homology and its variants to invariants of singular links (\cref{main:UKH-sing}); the examples include Lee homology~\cite{Lee2005}, Bar-Natan homology~\cite{BarNatan2005}, and Naot's universal Khovanov homology~\cite{Naot2006}.
It is also proved that the extended invariants satisfy a categorified form of the \emph{FI relation}, which is one of the fundamental relations appearing in Kontsevich's universal construction of Vassiliev invariants~\cite{Kontsevich1993}.

In the work of Vassiliev~\cite{Vassiliev1990}, it was pointed out that crossing change can be understood as a ``wall-crossing'' in the space of knots $\mathcal K$.
This viewpoint was formulated more clearly by Birman and Lin~\cite{Birman1993,BirmanLin1993}; they pointed out that Vassiliev's finite order invariants are characterized in terms of vanishing of their extension to singular knots via the following the \emph{Vassiliev skein relation}:
\begin{equation}
\label{eq:intro:Vassiliev-skein}
v^{(r+1)}\left(\diagSingUp\right)
= v^{(r)}\left(\diagCrossPosUp\right)
- v^{(r)}\left(\diagCrossNegUp\right)
\quad.
\end{equation}
The extended invariant $v^{(r)}$ is sometimes called the \emph{$r$-th Vassiliev derivative} of $v$, and $v$ is a Vassiliev invariant if and only if $v^{(r)}\equiv 0$ for sufficiently large $r$.
Notice that, as suggested in \eqref{eq:intro:Vassiliev-skein}, in Vassiliev theory, double points are regarded as traces of crossing change and measure the effect of them.
In fact, they showed a strong relationship between Vassiliev invariants and \emph{quantum invariants}: if $v$ is a quantum invariant with values in Laurent polynomials, then in its Taylor expansion, the coefficient of order $r$ is a Vassiliev invariant of order $r$.

Although the result of Birman and Lin is suggestive, the relation is, however, unclear in case of link homologies.
In an attempt to establish it, the first problem we have to solve is the following.

\begin{problem}\label{ProbCatVassiliev}
Realize the crossing change in \eqref{eq:intro:Vassiliev-skein} as a morphism on link homologies so that it is invariant under moves of singular knots.
\end{problem}

One may notice that an instance of crossing change is provided in link cobordism theory; it is realized as the following composition of cobordisms:
\begin{equation}
\label{eq:intro:cob-crossingchange}
\diagCrossNegUp{}=
\begin{tikzpicture}[baseline=-.5ex]
\node[circle,inner sep=2] (L) at (-.2,0) {};
\node[circle,inner sep=2] (R) at (.2,0) {};
\draw[red,very thick,-stealth] (-.6,-.8) -- (L) (L) to[out=56,in=180] (R.north) to[out=0,in=236] (.6,.8);
\draw[red,very thick,-stealth] (.6,-.8) to[out=124,in=0] (R.south) to[out=180,in=-56] (L.center) -- (-.6,.8);
\end{tikzpicture}
\xrightarrow{\text{saddle}}
\begin{tikzpicture}[baseline=-.5ex]
\node[circle,inner sep=2] (L) at (-.2,0) {};
\node[circle,inner sep=1] (R) at (.2,0) {};
\draw[red,very thick,-stealth] (-.6,-.8) -- (L) (L) to[out=56,in=90,looseness=3] (R.west) to[out=-90,in=-56,looseness=3] (L.center) -- (-.6,.8);
\draw[red,very thick,-stealth] (.6,-.8) to[out=124,in=-90] (R.east) to[out=90,in=236] (.6,.8);
\end{tikzpicture}
\xrightarrow{\RMove1^2}
\begin{tikzpicture}[baseline=-.5ex]
\node[circle,inner sep=2] (L) at (-.2,0) {};
\node[circle,inner sep=1] (R) at (.2,0) {};
\draw[red,very thick,-stealth] (-.6,-.8) -- (L.center) to[out=56,in=90,looseness=3] (R.west) to[out=-90,in=-56,looseness=3] (L) (L) -- (-.6,.8);
\draw[red,very thick,-stealth] (.6,-.8) to[out=124,in=-90] (R.east) to[out=90,in=236] (.6,.8);
\end{tikzpicture}
\xrightarrow{\text{saddle}}
\begin{tikzpicture}[baseline=-.5ex]
\node[circle,inner sep=2] (L) at (-.2,0) {};
\node[circle,inner sep=2] (R) at (.2,0) {};
\draw[red,very thick,-stealth] (-.6,-.8) -- (L.center) to[out=56,in=180] (R.north) to[out=0,in=236] (.6,.8);
\draw[red,very thick,-stealth] (.6,-.8) to[out=124,in=0] (R.south) to[out=180,in=-56] (L) (L) -- (-.6,.8);
\end{tikzpicture}
=\diagCrossPosUp
\quad.
\end{equation}
By the functoriality of Khovanov homology~\cite{Jacobsson2004,ClarkMorrisonWalker2009}, the sequence~\eqref{eq:intro:cob-crossingchange} gives rise to a map between Khovanov homologies of the form
\begin{equation}
\label{eq:intro:cob-crossingchange-Kh}
\mathit{Kh}\left(\diagCrossNegUp\right)
\to \mathit{Kh}\left(\diagCrossPosUp\right)
\quad.
\end{equation}
Unfortunately, it turns out that this is not a suitable instance of crossing change with respect to Vassiliev theory.
Indeed, a computation shows that the map~\eqref{eq:intro:cob-crossingchange-Kh} is of bidegree $(0,2)$ in the standard gradings, and, as a consequence, \eqref{eq:intro:cob-crossingchange-Kh} is not an isomorphism even if the crossing change actually does nothing.
In fact, it was showed by Hedden and Watson~\cite{HeddenWatson2018} that the map~\eqref{eq:intro:cob-crossingchange-Kh} induces a categorified version of \emph{Jones skein relation}, not Vassiliev skein relation~\eqref{eq:intro:Vassiliev-skein}.

As an attempt to finding a degree $(0,0)$ crossing change, we note that we have obtained an answer to \cref{ProbCatVassiliev} in our previous paper~\cite{ItoYoshida2019} in the case of Khovanov homology with coefficients in the field $\mathbb{F}_2$ of two elements.
Indeed, we constructed a chain map
\begin{equation}\label{eq:intro:PhiHatF2}
\widehat\Phi:
C^{\ast,\star}\left(\diagCrossNegUp;\mathbb F_2\right)
\to C^{\ast,\star}\left(\diagCrossPosUp;\mathbb F_2\right)
\quad,
\end{equation}
called the \emph{genus-one morphism}, so that it is invariant under moves of singular links.
More precisely, we showed that it is invariant under the moves
\begin{equation}
\label{eq:intro:moveSing}
\diagCrossSingRivOL \leftrightarrow \diagCrossSingRivOR
\ ,\quad
\diagCrossSingRivUL \leftrightarrow \diagCrossSingRivUR
\ ,\quad
\diagRvSingU \leftrightarrow \diagRvSingD
\quad,
\end{equation}
which generate all the moves of singular links.
On the other hand, this construction does not work for general coefficients or variants of Khovanov homologies, including Lee homology and Bar-Natan homology.

We achieve the goal by extending this result to these variants.
For this, we use Bar-Natan's category of ``picture''~\cite{BarNatan2005} with a little bit different notations to emphasize the relation to topological field theories (cf.~\cite{LaudaPfeiffer2009}).
Namely, let $k$ be a fixed coefficient ring.
For compact oriented $0$-dimensional manifolds $Y_0$ and $Y_1$, let us denote by $k\mathbf{Cob}_2(Y_0,Y_1)$ the $k$-linear category with cobordisms $Y_0\to Y_1$ as objects and formal sums of \emph{$2$-bordisms} (\cite{SchommerPries2009}, aka~\emph{cobordisms with corners}~\cite{Laures2000}) between them with coefficients in $k$ as morphisms.
Bar-Natan introduced three relations on the category called \ref{relK:S}, \ref{relK:T}, and \ref{relK:4Tu} relations; the quotient category will be denoted by $\Cob(Y_0,Y_1)$ in this paper.
He then constructed a chain complex $\KhOf{D}$ in $\Cob(\partial^-D,\partial^+D)$ for each tangle diagram $D$ and proved the following theorem.

\begin{theorem}[{Bar-Natan~\cite[Theorem~1]{BarNatan2005}}]
\label{theo:UKH-BarNatan}
The homotopy type of $\KhOf{D}$ is invariant under Reidemeister moves so that it defines an invariant of tangles.
\end{theorem}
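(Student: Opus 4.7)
The plan is to follow Bar-Natan's argument by reducing the global statement to three purely local computations, one for each Reidemeister move, and then handling each move by exhibiting explicit strong deformation retracts of the associated cube complexes modulo the relations \ref{relK:S}, \ref{relK:T}, and \ref{relK:4Tu}.

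First I would establish a locality principle for $\KhOf{-}$. Since the complex is defined crossing by crossing and then assembled by horizontal and vertical composition of tangle categories, two tangle diagrams $D$ and $D'$ that agree outside a disk $U$ give rise to complexes $\KhOf{D}$ and $\KhOf{D'}$ obtained by substituting $\KhOf{D\cap U}$ and $\KhOf{D'\cap U}$ into the same ``planar algebra'' expression. Because the tensor operations preserve chain homotopy equivalences, it suffices to produce, for each of the local tangle pictures appearing in a Reidemeister move, a chain homotopy equivalence between the corresponding small complexes in $\Cob$. This reduces \cref{theo:UKH-BarNatan} to three finite combinatorial statements.

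Next I would dispatch the Reidemeister moves one at a time. For \emph{R1}, the complex of the kinked arc is a two-term complex whose resolutions are a single strand and a strand with a detached circle; I would write down the chain map to and from the straight-arc complex given by the obvious cap/cup cobordism, verifying that one composition is the identity and that the other is homotopic to the identity by a homotopy whose only error term is a sphere, which vanishes by \ref{relK:S}. For \emph{R2}, the four-term cube complex has a contractible subcomplex whose killing uses a cylinder together with the \ref{relK:T} relation to identify two torus components; again I would give explicit cobordisms for the inclusion and projection of the remaining two-term complex and check that the homotopy is well-defined modulo \ref{relK:S} and \ref{relK:T}. For \emph{R3}, the two sides share a common eight-term cube; I would construct chain maps between the two sides by taking, in each vertex of the cube, the cobordism that realizes the appropriate isotopy of the resolved tangles, and check commutativity with the differentials using the \ref{relK:4Tu} relation, which precisely encodes the discrepancy between the two ways of pulling a strand past a crossing.

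The main obstacle will be the R3 verification. Even after exploiting the delooping-style simplifications available from \ref{relK:S} and \ref{relK:T}, the cube of resolutions for R3 has eight vertices and twelve edges, and one must produce a homotopy whose boundary term is a sum of four genus-zero cobordisms fitting exactly the shape of \ref{relK:4Tu}. The bookkeeping of orientations, edge signs, and the identification of the saddle cobordisms between $0$- and $1$-resolutions across the two sides of the move is where most of the work lies; once the relevant 4-tuple of surfaces is identified, the rest follows mechanically. The R1 and R2 cases, by contrast, reduce to short calculations once the right local cobordisms are written down.
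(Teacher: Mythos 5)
The overall strategy you describe---reduce to the local pictures via the composition and tensor formulas (\cref{prop:TangleAst}, \cref{prop:TangleOtimes}) and then exhibit explicit local homotopy equivalences using \ref{relK:S}, \ref{relK:T}, and \ref{relK:4Tu}---is indeed the strategy of Bar-Natan's proof, which this paper imports rather than reproves. However, your plan for R3 has a genuine gap. The two sides of that move do \emph{not} ``share a common eight-term cube'': several corresponding vertices are non-isotopic crossingless tangles with six endpoints (for instance, the two distinct Temperley--Lieb diagrams $e_1e_2e_1=e_1$ and $e_2e_1e_2=e_2$ occur at corresponding extreme corners). Hence at those vertices there is no ``cobordism realizing the appropriate isotopy,'' and no vertex-wise chain map of the kind you propose exists; the difficulty is structural, not bookkeeping. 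Bar-Natan circumvents it with the categorified Kauffman trick: each side is written as the mapping cone over the saddle at one distinguished crossing, the R2 equivalences (upgraded to strong deformation retracts) are applied to the two halves of each cone, and one checks that these retractions intertwine the connecting saddle morphisms modulo \ref{relK:4Tu}; homotopy functoriality of mapping cones (in the spirit of \cref{prop:cone-hfunc} and \cref{cor:cone-heq}) then yields the equivalence. An alternative repair is to deloop and Gaussian-eliminate both cubes down to isomorphic complexes, but either way the R3 step must be restructured, not merely computed.

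Two smaller corrections. In the R2 cube no tori occur: the extraneous vertex contains a closed \emph{circle}, and \ref{relK:T} is not what eliminates it, so your phrase about ``a cylinder together with the T relation to identify two torus components'' does not describe any step of the argument. Conversely, for R1 the sphere relation alone is not enough: the composite $GF=\mathrm{id}$ uses \ref{relK:T} (a closed torus evaluates to $2$) as well as \ref{relK:S}, and the homotopy $FG\simeq\mathrm{id}$ compares a connected ``merge-then-split'' surface with disconnected ones, which forces an application of the neck-cutting consequence of \ref{relK:4Tu}. With these repairs your outline does reproduce the cited argument.
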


To recover link homologies, one can apply a functor $Z:\Cob(\varnothing,\varnothing)\to\mathbf{Mod}_k$ to the complex $\KhOf{D}$ for a link diagram $D$.
Indeed, by virtue of \cref{theo:UKH-BarNatan}, the homology of the resulting chain complexes over $k$ is an invariant for links.
In particular, in the case where $Z$ is the TQFT associated with the Frobenius algebra $k[x]/(x^2)$, the construction above yields the original Khovanov homology.
As for variants, one can take the Frobenius algebra $C_{h,t}\coloneqq k[x]/(x^2-hx-t)$ for $h,t\in k$ (see e.g.~\cite{Khovanov2006}, \cite{LaudaPfeiffer2009}); if $(h,t)=(0,1),(1,0)$, one obtains Lee homology~\cite{Lee2005} and Bar-Natan homology~\cite{BarNatan2005} respectively.
This type of Frobenius algebras are comprehensively discussed in \cite{Khovanov2006}, and some sorts of universality are proved in \cite{Naot2006}.

We generalize the \emph{genus-one morphism} $\widehat\Phi$ in \eqref{eq:intro:PhiHatF2} to the universal Khovanov homology based on the morphism given by the following sum of cobordisms:
\[
\BordPhiFst\;-\;\BordPhiSnd:\diagSmoothV\to\diagSmoothV
\quad.
\]
In fact, it turns out that this induces a morphism of chain complexes
\begin{equation}
\label{eq:intro:UKH-genus1}
\widehat\Phi:\KhOf*{\diagCrossNegUp}\to\KhOf*{\diagCrossPosUp}
\quad.
\end{equation}
Our first main result is the following.

\begin{mainthm}[\cref{theo:genus1-inv}, \cref{theo:Phi-comm-RI}]
\label{main:genus1}
The genus-one morphism $\widehat\Phi$ is of bidegree $(0,0)$ and satisfies the following properties:
\begin{enumerate}[label=\upshape(\arabic*)]
  \item it is invariant under the moves in \eqref{eq:intro:moveSing}.
  \item it commutes with the morphisms associated with the Reidemeister moves of type \RomNum1.
\end{enumerate}
\end{mainthm}

As a result, we obtain a map realizing crossing change on link homologies.
In particular, it agrees with the genus-one morphism constructed in~\cite{ItoYoshida2019} in the case of Khovanov homology with coefficients in $\mathbb F_2$.

By virtue of Vassiliev skein relation~\eqref{eq:intro:Vassiliev-skein}, crossing change gives rise to an extension of link invariants to singular links.
In the case of link homologies, an analogous idea works; indeed, we consider mapping cones instead of the substitution in \eqref{eq:intro:Vassiliev-skein}.

\begin{mainthm}[\cref{cor:vassiliev-cone-UKH}, \cref{theo:Khsing-inv}]
\label{main:UKH-sing}
For every singular tangle diagram $D$, there is a complex $\KhOf{D}$ in the category $\Cob(\partial_0D,\partial_1D)$ so that there is an isomorphism
\begin{equation}
\label{eq:intro:cat-Vassiliev-skein}
\KhOf*{\diagSingUp}
\cong\Cone\left(\KhOf*{\diagCrossNegUp}\xrightarrow{\widehat\Phi}\KhOf*{\diagCrossPosUp}\right)
\quad.
\end{equation}
Furthermore, the complex $\KhOf{D}$ is invariant under moves of singular tangles up to chain homotopy equivalences.
\end{mainthm}

We note that the isomorphism~\eqref{eq:intro:cat-Vassiliev-skein} categorifies the Vassiliev skein relation~\eqref{eq:intro:Vassiliev-skein}.
Indeed, it induces the following long exact sequence of Khovanov homologies with coefficients in $k$:
\[
\begin{tikzcd}[column sep=.9em]
\cdots \ar[r] & \operatorname{Kh}^{i,j}\left(\diagCrossNegUp\;;k\right) \ar[r,"\widehat\Phi_\ast"] & \operatorname{Kh}^{i,j}\left(\diagCrossPosUp\;;k\right) \ar[r] & \operatorname{Kh}^{i,j}\left(\diagSingUp\;;k\right) \ar[dl,out=-20,in=160] & \\
&& \operatorname{Kh}^{i+1,j}\left(\diagCrossNegUp\;;k\right) \ar[r,"\widehat\Phi_\ast"] & \operatorname{Kh}^{i+1,j}\left(\diagCrossPosUp\;;k\right) \ar[r] &
\cdots\quad.
\end{tikzcd}
\]
Taking the Euler characteristics, one recovers the Vassiliev skein relation on the (unnormalized) Jones polynomial.
Hence, this observation also shows that our extension of Khovanov homology categorifies Jones polynomial even on singular links.

As an invariant of singular knots in view of Vassiliev theory, the extended Khovanov homology enjoys several properties.
We in particular focus on the \emph{FI relation}, which appears in Kontsevich's construction of the universal Vassiliev invariant~\cite{Kontsevich1993}; namely, it is easily checked that for every knot invariant $v$, its Vassiliev derivatives satisfy the following identity:
\begin{equation}
\label{eq:intro:FI-abinv}
v^{(r)}\left(\diagFiSing\right) = 0
\quad.
\end{equation}
To understand it more clearly, we quickly review the work of Vassiliev~\cite{Vassiliev1990}.
Let us denote by $\mathcal M$ the space of generic smooth maps $S^1\to\mathbb R^3$ equipped with Whitehead $C^\infty$-topology.
Then, Thom-Boardman theory (see~\cite[Chapter~VI]{GolubitskyGuillemin1973}) gives rise to a stratification $\mathcal M=\bigcup_i\mathcal M_i$: for example,
\begin{itemize}
  \item $\mathcal M_0$ consists of smooth embeddings;
  \item $\mathcal M_1$ consists of smooth immersions with exactly one double point;
  \item $\mathcal M_2$ consists of
\begin{enumerate}[label=\upshape(\alph*)]
  \item\label{M2:crit} smooth injections with exactly one critical point and
  \item smooth immersion with exactly two double points.
\end{enumerate}
\end{itemize}
Although $\mathcal M$ is not finite dimensional, it turns out that each stratum $\mathcal M_i\subset\mathcal M$ has codimension exactly $i$.
Since knot invariants can be seen as cohomology classes on $\mathcal M$, ``Poincar\'e duality'' hence implies that the homology class of $\mathcal M_i$ yields degree $i$ relations.
For instance, Vassiliev skein relation comes from the stratum $\mathcal M_1$.

The FI relation~\eqref{eq:intro:FI-abinv} is also one of them.
Indeed, for a point in $\mathcal M_2$ of type~\ref{M2:crit} above, its neighborhood in $\mathcal M$ is depicted as in \cref{fig:FI-Vassiliev}, and let us consider the two paths (1) and (2) there.
\begin{figure}[tbp]
\centering
\begin{tikzpicture}
\node[below] (N) at (0,-1) {\diagFiNeg};
\node[above] (P) at (0,1) {\diagFiPos};
\node[right] (Cr) at (1,0) {\diagFiCrit};
\node[right] (Non) at (3,0) {\diagFiNil};
\draw[red] (-3,0) node[left]{\diagFiSing} -- node[below left] {$\mathcal M_1$} (1,0);
\fill[blue] (1,0) circle (.15) node[below]{$\mathcal M_2$};
\draw[thick,dotted] (N) to[bend right] (Non);
\draw[thick,dotted,-stealth] (Non) to[bend right] node[above right]{(1)} (P);
\draw[thick,dotted,-stealth] (N) to[bend left] node[above left] {(2)} (P);
\end{tikzpicture}
\caption{The FI~relation in Vassilev theory}
\label{fig:FI-Vassiliev}
\end{figure}
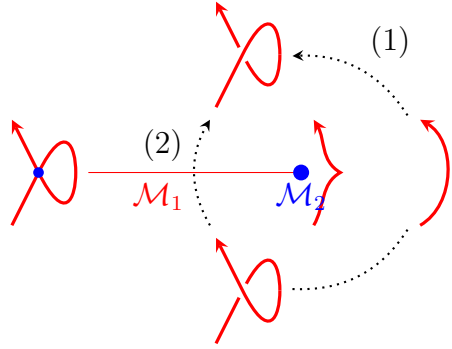
If a knot $K$ moves along the path (1), the value of $v(K)$ does not change.
On the other hand, if it goes along (2), $v(K)$ is subject to Vassiliev skein relation since it crosses the ``wall'' $\mathcal M_1$.
Comparing the effects of the two paths, we obtain the equation \eqref{eq:intro:FI-abinv}.
Actually, our extension of Khovanov complex satisfies a categorified analogue.

\begin{maincor}[\cref{cor:Ubrc-FI}]
The complex below is contractible, i.e. the identity is null-homotopic
\[
\KhOf*{\diagFiSing}
\quad.
\]
\end{maincor}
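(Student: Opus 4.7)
The plan is to combine \cref{main:UKH-sing} with the Reidemeister~\RomNum1 invariance of the genus-one morphism. By the categorified Vassiliev skein relation, resolving the double point in $\diagFiSing$ gives
\[
\KhOf*{\diagFiSing} \cong \Cone\bigl(\widehat\Phi \colon \KhOf*{\diagFiNeg} \to \KhOf*{\diagFiPos}\bigr),
\]
so the corollary reduces to showing that this particular instance of $\widehat\Phi$ is a chain homotopy equivalence in $\Cob$.

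Observe that $\diagFiNeg$ and $\diagFiPos$ are themselves Reidemeister~\RomNum1 curls on a single strand, with opposite kink signs, and therefore each admits, by \cref{theo:UKH-BarNatan}, an explicit chain homotopy equivalence $\alpha_\pm$ onto $\KhOf*{\diagFiNil}$, the complex of the trivial strand. Item~(2) of \cref{main:genus1} asserts that $\widehat\Phi$ commutes with such Reidemeister~\RomNum1 morphisms, yielding a commutative square
\[
\begin{tikzcd}
\KhOf*{\diagFiNeg} \ar[r,"\widehat\Phi"] \ar[d,"\alpha_-"',"\simeq"] & \KhOf*{\diagFiPos} \ar[d,"\alpha_+"',"\simeq"] \\
\KhOf*{\diagFiNil} \ar[r,"\bar\Phi"] & \KhOf*{\diagFiNil}.
\end{tikzcd}
\]
Since the vertical maps are homotopy equivalences, $\widehat\Phi$ is a homotopy equivalence exactly when $\bar\Phi$ is, and in that case $\Cone(\widehat\Phi)$ is contractible.

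The final step is therefore to identify $\bar\Phi$. The complex $\KhOf*{\diagFiNil}$ is concentrated in a single homological degree with generator the trivial strand, so $\bar\Phi$ is represented by a single $2$-bordism from this strand to itself. Unfolding the definition $\widehat\Phi = \BordPhiFst - \BordPhiSnd$ through the chosen Reidemeister~\RomNum1 retractions, one computes this bordism explicitly in $\Cob$ and identifies it, modulo the \emph{S}, \emph{T}, and \emph{4Tu} relations, with the identity cylinder on the strand.

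The main obstacle is precisely this diagrammatic computation. One must pick concrete $2$-bordisms implementing the Reidemeister~\RomNum1 simplifications and carefully track how the two genus-one summands of $\widehat\Phi$ transform after pre- and postcomposition with them. The key point is that the two contributions must not cancel but must combine, modulo the local relations of $\Cob$, into the identity bordism; otherwise $\Cone(\widehat\Phi)$ would split as a sum of shifted copies of $\KhOf*{\diagFiNil}$ instead of being contractible. Once $\bar\Phi$ has been shown to be an isomorphism in the homotopy category, the contractibility of $\KhOf*{\diagFiSing}$ follows at once from the standard characterization of contractible mapping cones.
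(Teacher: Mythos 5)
Your reduction of the corollary to showing that the genus-one morphism $\widehat\Phi$ is a chain homotopy equivalence, and your plan to achieve this by comparing with the trivial strand via the Reidemeister~\RomNum1 equivalences, is the right idea and the same basic strategy as the paper. However, as written there is a genuine gap: you reduce everything to identifying the induced map $\bar\Phi$ on $\KhOf*{\diagFiNil}$ with the identity, and then only \emph{describe} the required diagrammatic computation in $\Cob$ modulo the $S$, $T$, $4Tu$ relations without actually doing it. That computation is the substance of the result. If $\bar\Phi$ were, say, zero---a priori plausible, since $\widehat\Phi$ is a difference of two bordisms---then $\Cone(\widehat\Phi)$ would split into shifted copies of the trivial strand and would not be contractible, as you yourself observe. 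A proof cannot stop at the very step that determines the outcome.

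The paper sidesteps the auxiliary map $\bar\Phi$ altogether by stating the key identity as a strictly commutative \emph{triangle} (\cref{theo:Phi-comm-RI}) rather than a square. Both Bar-Natan sections $\RMoveBar1^-$ and $\RMoveBar1^+$ from \eqref{eq:RI-morcob} go \emph{out} of $\brcOf*{\diagFiNil^\chi}$, and the claim is $\widehat\Phi\circ\RMoveBar1^-=\RMoveBar1^+$ on the nose. Since both $\RMoveBar1^\pm$ are already chain homotopy equivalences, two-out-of-three gives at once that $\widehat\Phi$ is one, and the cone is contractible with no need to compute anything further. The verification of this triangle is precisely the concrete calculation you deferred: first the $4Tu$ relation rewrites $\widehat\Phi$ on the local picture as a difference of two particular bordisms, and then one records two diffeomorphisms of cobordisms matching, after precomposition with $\RMoveBar1^-$, each of these summands with the corresponding summand of $\RMoveBar1^+$. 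Your square formulation with retractions $\alpha_\pm$ would, after inverting the Reidemeister~\RomNum1 equivalences, reduce to the same two diffeomorphism checks, but only up to homotopy; the triangle form makes the identity strict and the conclusion immediate.
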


To the best of our knowledge, there is no singular tangle homology except ours known to satisfy the FI relation in the sense above.
In other words, it distinguishes our invariant from the others.

The plan of this paper is as follows.
We review Bar-Natan's complexes of cobordisms for ordinary tangles in \cref{sec:univ-Kh}.
In particular, the category $\Cob(Y_0,Y_1)$ is defined in terms of cobordisms with corners; we mainly follow~\cite{SchommerPries2009} for this material.
We also define the \emph{universal bracket complex} as the ``unshifted'' version of the universal Khovanov complex.
The checkerboard colorings are discussed to determine the orientations on cobordisms.

We then define the genus-one morphism $\widehat\Phi$ \eqref{eq:intro:UKH-genus1} in \cref{sec:genus-1}.
In \cref{sec:invariance}, we show the invariance of $\widehat\Phi$ under the moves of singular tangles.
Using the result, in \cref{sec:UKHsing}, we extend the Khovanov complex to singular tangles and verify the FI relation.

\section{The universal Khovanov  complex}\label{sec:univ-Kh}

\subsection{Cobordisms of manifolds with corners}

In order to develop tangle homology, we need the notion of cobordism of manifolds with corners.
We here give a brief sketch, and for details, we refer the reader to 
\cite[Definition~1]{Janich1968}, \cite{Laures2000}, and \cite{SchommerPries2009}.

Let $Y_0$ and $Y_1$ be closed oriented $0$-manifolds (i.e.~finite sets with a label $\{-,+\}$ on each element), and let $W_0$ and $W_1$ be two cobordisms from $Y_0$ to $Y_1$.
Then, a \emph{$2$-bordism} from $W_0$ to $W_1$ is a compact oriented $2$-manifold $S$ with corners such that
\begin{itemize}
\item $\partial S$ is a union of submanifolds: $\partial S=\partial_0 S\cup\partial_1 S$;
\item it is equipped with orientation-preserving diffeomorphisms
\[
s_0:\overbar{W_0}\amalg W_1 \xrightarrow\simeq \partial_0S
\ ,\quad
s_1: (\overbar{Y_0}\amalg Y_1)\times[0,1] \xrightarrow\simeq \partial_1S
\quad,
\]
here $\overbar{W_0}$ and $\overbar{Y_0}$ are respectively  the manifolds $W_0$ and $Y_0$ with the reversed orientation.  
\end{itemize}
In this case, we write $S:W_0\to W_1:Y_0\to Y_1$ or $S:W_0\to W_1$ simply.

\begin{definition}\label{def:Cob}
For closed oriented $0$-manifolds $Y_0$ and $Y_1$, we define a category $\mathbf{Cob}_{2}(Y_0,Y_1)$ as follows:
\begin{itemize}
  \item the objects are cobordisms $W:Y_0\to Y_1$;
  \item the morphisms are diffeomorphism classes of $2$-bordisms $S:W_0\to W_1:Y_0\to Y_1$, where only diffeomorphisms that preserve orientations and structure maps are considered;
  \item the composition is given by gluing.
\end{itemize}
\end{definition}

\begin{remark}
\label{rem:glue-unique}
By Collar Neighborhood Theorem \cite[Lemma~2.1.6]{Laures2000},   every chain of bordisms actually  admits gluing.
It also turns out that gluing is unique up to diffeomorphisms of bordisms.
In general, though a choice of such diffeomorphisms is not canonical, it can be done within a canonical choice of an isotopy class \cite{SchommerPries2009}.
\end{remark}

In view of \cref{rem:glue-unique}, the composition is associative.
For a cobordism $W:Y_0\to Y_1$, the identity on $W$ is represented by the trivial $2$-bordism $W\times[0,1]$.
Hence, $\mathbf{Cob}_{2}(Y_0,Y_1)$ is a category.

\begin{convention}
In this paper, we always use the ``bottom-to-top'' convention for cobordisms and the ``left-to-right'' one for $2$-bordisms as in \cref{fig:example-2bord}.
\begin{figure}[htbp]
\centering
\begin{tikzpicture}[xlen=.5pt,ylen=-.5pt,scale=.8]
\node at (0,0) {$S$};
\node at (-100,0) {$\partial_1^- S$};
\node at (100,0) {$\partial_1^+ S$};
\node[left] at (0,60) {$\xrightarrow{\simeq}\;\partial_0^-S$};
\node[right] at (0,-60) {$\partial_0^+S\;\xleftarrow{\simeq}$};
\begin{scope}[transform canvas={xshift=-120}]
\node at (-100,0) {$Y_0$};
\node at (100,0) {$Y_1$};
\node at (0,60) {$W_0$};
\draw[red,very thick] (-59.5,40.5) -- (-19.5,40.5) to[quadratic={(0.5,40.5)}] (0.5,20.5) to[quadratic={(0.5,0.5)}] (-19.5,0.5) to[quadratic={(-39.5,0.5)}] (-39.5,-19.5) to[quadratic={(-39.5,-39.5)}] (-19.5,-39.5) -- (60.5,-39.5);
\draw[red,very thick] (60.5,40.5) to[quadratic={(40.5,40.5)}] (40.5,20.5) to[quadratic={(40.5,0.5)}] (60.5,0.5);
\fill[blue] (-59.5,40.5) circle (.5ex);
\fill[blue] (60.5,-39.5) circle(.5ex);
\fill[blue] (60.5,40.5) circle(.5ex);
\fill[blue] (60.5,0.5) circle(.5ex);
\end{scope}
\draw[red,very thick] (-73.59,48.63) -- (-33.59,48.63) to[quadratic={(-13.59,48.63)}] (-6.545,44.57) to[quadratic={(-4.71,43.51)}] (-4.71,42.73);
\draw[red,very thick,dotted] (-4.71,42.73) to[quadratic={(-4.71,40.5)}] (-19.5,40.5) to[quadratic={(-39.5,40.5)}] (-32.46,36.43) to[quadratic={(-25.41,32.37)}] (-5.41,32.37) -- (-3.76,32.37);
\draw[red,very thick] (-3.76,32.37) -- (29.86,32.37);
\draw[red,very thick,dotted] (29.86,32.37) -- (60.5,32.37);
\draw[red,very thick] (60.5,32.37) -- (74.59,32.37);
\draw[red,very thick] (46.41,48.63) to[quadratic={(31.62,48.63)}] (31.62,46.41);
\draw[red,very thick,dotted] (31.62,46.41) to[quadratic={(31.62,45.63)}] (33.46,44.57) to[quadratic={(37.64,42.15)}] (46.41,41.17);
\draw[red,very thick] (46.41,41.17) to[quadratic={(52.39,40.5)}] (60.5,40.5);
\draw[black] (-4.71,42.72) .. controls+(0,-40)and+(0,-40) .. (31.62,46.41);
\draw[black,thick,dotted] (-34.29,38.28) -- (-34.29,-31.37);
\draw[black] (-34.29,-31.37) -- (-34.29,-41.72);
\draw[black] (9.38,-45.41) .. controls(9.38,-40.08)and(10.03,-35.4) .. (11.14,-31.37);
\draw[black,thick,dotted] (11.14,-31.37) .. controls(17.61,-8.11)and(39.9,-7) .. (44.76,-31.37);
\draw[black] (44.76,-31.37) .. controls(45.37,-34.42)and(45.71,-37.87) .. (45.71,-41.72);
\draw[blue,very thick] (-73.59,48.63) -- (-73.59,-31.37);
\draw[blue,very thick] (46.41,48.63) -- (46.41,-31.37);
\draw[blue,very thick] (60.5,40.5) -- (60.5,-39.5);
\draw[blue,very thick] (74.59,32.37) -- (74.59,-47.63);
\draw[red,very thick] (-73.59,-31.37) -- (46.41,-31.37);
\draw[red,very thick] (-32.46,-43.57) to[quadratic={(-39.5,-39.5)}] (-19.5,-39.5) to[quadratic={(0.5,-39.5)}] (7.545,-43.57) to[quadratic={(14.59,-47.63)}] (-5.41,-47.63) to[quadratic={(-25.41,-47.63)}] (-32.46,-43.57);
\draw[red,very thick] (60.5,-39.5) to[quadratic={(40.5,-39.5)}] (47.54,-43.57) to[quadratic={(54.59,-47.63)}] (74.59,-47.63);
\begin{scope}[transform canvas={xshift=120}]
\node at (-100,0) {$Y_0$};
\node at (100,0) {$Y_1$};
\node at (0,-60){$W_1$};
\draw[red,very thick] (-59.5,40.5) -- (60.5,40.5);
\draw[red,very thick] (-39.5,-19.5) to[quadratic={(-39.5,0.5)}] (-19.5,0.5) to[quadratic={(0.5,0.5)}] (0.5,-19.5) to[quadratic={(0.5,-39.5)}] (-19.5,-39.5) to[quadratic={(-39.5,-39.5)}] (-39.5,-19.5);
\draw[red,very thick] (60.5,0.5) to[quadratic={(40.5,0.5)}] (40.5,-19.5) to[quadratic={(40.5,-39.5)}] (60.5,-39.5);
\fill[blue] (-59.5,40.5) circle (.5ex);
\fill[blue] (60.5,-39.5) circle(.5ex);
\fill[blue] (60.5,40.5) circle(.5ex);
\fill[blue] (60.5,0.5) circle(.5ex);
\end{scope}
\end{tikzpicture}
\caption{Example of a $2$-bordism (orientation omitted).}
\label{fig:example-2bord}
\end{figure}
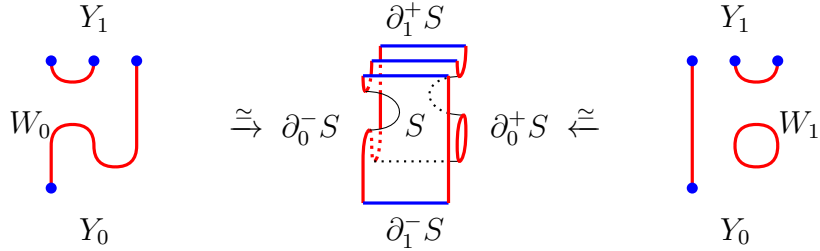
\end{convention}

The gluing also gives rise to a functor.
Indeed, if $Y_0$, $Y_1$ and $Y_2$ are closed oriented $0$-manifolds, then we define a functor
\begin{equation}
\label{eq:h-composition}
(\blank)\ast(\blank):\mathbf{Cob}_{2}(Y_1,Y_2)\times\mathbf{Cob}_{2}(Y_0,Y_1)
\to \mathbf{Cob}_{2}(Y_0,Y_2)
\end{equation}
as follows (see \cite{SchommerPries2009} for details):
\begin{itemize}
  \item for objects, if $W:Y_0\to Y_1$ and $W':Y_1\to Y_2$, then choose a gluing $\widetilde W$ of $W$ and $W'$ along $Y_1$, and set $W'\ast W\coloneqq \widetilde W$ (which is often denoted by $W'\circ W$);
  \item for morphisms, if $S:W_0\to W_1:Y_0\to Y_1$ and $S':W'_0\to W'_1:Y_1\to Y_2$, then $S\ast S'$ is the diffeomorphism class of a gluing of $S$ and $S'$ along $Y_1$ with respect to $W'_0\circ W_0$ and $W'_1\circ W_1$.
\end{itemize}

\begin{lemma}[\cite{SchommerPries2009}]
\label{lem:h-comp-unique}
The construction above actually defines a unique functor \eqref{eq:h-composition} up to a canonical isomorphism.
\end{lemma}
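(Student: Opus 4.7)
The plan is to reduce everything to two inputs: the Collar Neighborhood Theorem (cf.~\cite[Lemma~2.1.6]{Laures2000}) and the fact, recorded in \cref{rem:glue-unique}, that any two gluings of the same configuration are diffeomorphic via an orientation- and boundary-preserving diffeomorphism which is well defined up to a canonical isotopy class. I would then verify in turn well-definedness on objects, on morphisms, and the functor axioms.

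First, for objects, given $W\colon Y_0\to Y_1$ and $W'\colon Y_1\to Y_2$, fixing collars of $Y_1$ in $W$ and in $W'$ yields a smooth manifold $W'\cup_{Y_1}W$ with corners, which I take as a representative $W'\ast W$. Any other choice of collars produces a representative related to this one by an orientation-preserving diffeomorphism commuting with the inclusions of $Y_0$ and $Y_2$, and by the uniqueness portion of \cref{rem:glue-unique} the isotopy class of such a diffeomorphism is uniquely determined. This gives a canonical isomorphism in $\mathbf{Cob}_2(Y_0,Y_2)$ between any two representatives of $W'\ast W$, so the assignment $(W',W)\mapsto W'\ast W$ is well defined up to canonical isomorphism.

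Next, for morphisms $S\colon W_0\to W_1$ and $S'\colon W'_0\to W'_1$, I would glue $S$ and $S'$ along the prescribed piece of $\partial_1$ identified with $Y_1\times[0,1]$, using collars compatible with the ones chosen on the objects. The result is a $2$-bordism $(S'\ast S)\colon (W'_0\ast W_0)\to(W'_1\ast W_1)$. Since morphisms in $\mathbf{Cob}_2$ are taken modulo diffeomorphisms preserving orientation and structure maps, the isotopy freedom from the Collar Neighborhood Theorem is absorbed in passing to diffeomorphism classes, making $S'\ast S$ independent of the choices made. The functor axioms follow quickly: the identity $W\times[0,1]$ on $W$ glues with $W'\times[0,1]$ to a bordism diffeomorphic to $(W'\ast W)\times[0,1]$, giving preservation of identities; and the compatibility of $\ast$ with the vertical composition along $\partial_0$ is a consequence of the associativity of gluing of $2$-bordisms along disjoint boundary strata, again using \cref{rem:glue-unique}.

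The main obstacle I expect is the bookkeeping of the ``canonical isomorphisms'': one must check that the isomorphisms produced by different choices of collars fit together coherently, so that the functor is uniquely specified up to a \emph{canonical} natural isomorphism and not merely up to some unspecified isomorphism. Here the crucial point, and the reason the argument works cleanly, is that while the gluing diffeomorphisms themselves are not canonical, their isotopy classes are; since $\mathbf{Cob}_2(Y_0,Y_2)$ only records diffeomorphism classes of $2$-bordisms and bordisms up to orientation- and boundary-preserving diffeomorphism, the ambiguity disappears at the level of the category. For a fully rigorous treatment of the coherence I would refer to the $2$-categorical framework of \cite{SchommerPries2009}.
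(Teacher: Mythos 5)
The paper does not give its own proof of this lemma: it is stated as a citation to \cite{SchommerPries2009}, with the relevant ingredients (the Collar Neighborhood Theorem and the existence of a canonical isotopy class of gluing diffeomorphisms) only recorded informally in \cref{rem:glue-unique}. So there is no in-text argument against which to compare you line by line; what you have written is a reconstruction of the argument one expects to find in the cited source, and it uses exactly the two inputs the paper itself flags.

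Your reconstruction is substantively correct, and your division of labor---first well-definedness on objects, then on morphisms, then the functor axioms---is the natural one. I would flag one imprecision in your closing paragraph. You write that the ambiguity disappears because $\mathbf{Cob}_2(Y_0,Y_2)$ ``only records diffeomorphism classes of $2$-bordisms \emph{and bordisms},'' but by \cref{def:Cob} the objects of $\mathbf{Cob}_2(Y_0,Y_2)$ are honest cobordisms, not diffeomorphism classes of them. The ambiguity on objects therefore does \emph{not} disappear; it is instead tamed into a canonical isomorphism, exactly as you describe in your second paragraph: the gluing diffeomorphism between two representatives of $W'\ast W$ has a canonical isotopy class, and because \emph{morphisms} are diffeomorphism classes of $2$-bordisms, this isotopy class determines a well-defined invertible morphism (the mapping-cylinder $2$-bordism of the diffeomorphism) in $\mathbf{Cob}_2(Y_0,Y_2)$. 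Your final paragraph should be phrased in those terms rather than suggesting that objects are already quotiented. With that correction, the sketch is a faithful account of how \cref{lem:h-comp-unique} is established, and your deferral to the $2$-categorical coherence machinery of \cite{SchommerPries2009} for the full ``canonical natural isomorphism'' statement is the right move, since that is precisely where the paper also stops.
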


The disjoint union of manifolds gives rise to another functor
\begin{equation}
\label{eq:cobfunc-disj}
(\blank)\otimes(\blank):\mathbf{Cob}_{2}(Y_0,Y_1)\times\mathbf{Cob}_{2}(Y'_0,Y'_1)
\to \mathbf{Cob}_{2}(Y_0\amalg Y'_0,Y_1\amalg Y'_1)
\quad.
\end{equation}
This functor is associative in the sense that it defines an essentially unique functor
\[
\begin{multlined}
\mathbf{Cob}_{2}(Y^{(1)}_0,Y^{(1)}_1)\times\dots\times\mathbf{Cob}_{2}(Y^{(r)}_0,Y^{(r)}_1)
\\
\to \mathbf{Cob}_{2}(Y^{(1)}_0\amalg\dots\amalg Y^{(r)}_0,Y^{(1)}_1\amalg\dots\amalg Y^{(r)}_1)
\quad.
\end{multlined}
\]
In particular, in the case $Y_0=Y_1=\varnothing$, we obtain a symmetric monoidal structure on the category $\mathbf{Cob}_{2}(\varnothing,\varnothing)$.
We further introduce two functors, both of which are given by the orientation reversion:
\begin{equation}
\label{eq:cobfunc-rev}
\begin{gathered}
\begin{array}[t]{rccc}
  \rho_0:&\mathbf{Cob}_{2}(Y_0,Y_1)&\to&\mathbf{Cob}_{2}(\overbar{Y_0},\overbar{Y_1})\\
  \text{on cobordisms} & W &\mapsto& \overbar{W} \\
  \text{on $2$-bordisms} & S &\mapsto& \overbar{S}
\end{array}
\quad,\\[2ex]
\begin{array}[t]{rccc}
  \rho_2:&\mathbf{Cob}_{2}(Y_0,Y_1)^{\opposite}&\mapsto&\mathbf{Cob}_{2}(Y_0,Y_1)\\
  \text{on cobordisms} & W &\mapsto& W \\
  \text{on $2$-bordisms} & S &\mapsto& \overbar{S}
\end{array}
\quad.
\end{gathered}
\end{equation}
These functors respect gluing and disjoint union.

\subsection{The category $\Cob(Y_0, Y_1)$}\label{sec:univ-Kh:K}

Let $k$ be a commutative ring and $\mathcal{C}$ a $k$-linear category.  We define a category $Mat(\mathcal{C})$ as follows \cite[VIII.2, Exercise~6]{MacLane1971}:
\begin{itemize}
\item An object is a tuple $(A_1, A_2, \ldots, A_n)$, which is denoted by $\bigoplus_{i=1}^{n} A_i$ of $n \in \mathbb{N}$ and $A_i \in \mathcal{C}$.
\item For objects $\bigoplus_{i=1}^{n} A_i$ and $\bigoplus_{j=1}^{m} B_j$, a morphism is defined by the set $\{ f_{ij} : A_i \to B_j \}_{i=1, j=1}^{n \quad m}$, here $f_{ij}$ is a morphism of $\mathcal{C}$.
\item Compositions of morphisms are defined in the same way as the matrix multiplication.
\end{itemize}

\begin{notation}\label{MatSumNo}
For $\{f_{ij}\} \in Mat (\mathcal{C})$, we often denote it by $\sum_{i, j} f_{ij}$ if there is no danger of confusion.
\end{notation}

Let $Y_0$ and $Y_1$ be compact oriented $0$-manifolds.
For a fixed commutative ring $k$, we extend the category $\mathbf{Cob}_2(Y_0,Y_1)$ to a $k$-linear category $k\mathbf{Cob}_2(Y_0,Y_1)$ with the same objects and $k\mathbf{Cob}_2(Y_0,Y_1)(W_0, W_1)$ being the free $k$-module generated by the hom-set $\mathbf{Cob}_2(Y_0,Y_1)(W_0, W_1)$.
We introduce the following relations on $k\mathbf{Cob}_2(Y_0,Y_1)(W_0,W_1)$ for each cobordisms $W_0$ and $W_1$.
\begin{enumerate}[label=\upshape(\unexpanded{\labelseq{$S$\\$T$\\$4Tu$}}{\value*})]
  \item\label{relK:S} $S \amalg S^2= 0$ for each $2$-bordism $S$, here $S^2$ is the $2$-dimensional sphere;
  \item\label{relK:T} $S\amalg T^2= 2\cdot S$ for each $2$-bordism $S$, here $T^2$ is the $2$-dimensional torus $T^2=S^1\times S^2$;
  \item\label{relK:4Tu} $S_1+S_2-S_3-S_4 = 0$ for each quadruple of $2$-bordisms $S_1$, $S_2$, $S_3$, and $S_4$ which are identical outside disks and tubes that are depicted as follows:
\[
\begin{array}{c|c|c|c}
  S_1 & S_2 & S_3 & S_4 \\
  \BordFourTuL & \BordFourTuR & \BordFourTuU & \BordFourTuD
\end{array}
\quad.
\]
\end{enumerate}
We denote by $k\mathbf{Cob}_2(Y_0,Y_1)/\mathcal L$ the quotient category and set  
\[
 \Cob(Y_0,Y_1)
\coloneqq\operatorname{Mat}(k\mathbf{Cob}_2(Y_0,Y_1)/\mathcal L)
\quad.
\]

\begin{lemma}
\label{lem:K-cobfuncs}
The two functors \eqref{eq:h-composition} and \eqref{eq:cobfunc-disj} induce $k$-bilinear functors
\[
\begin{gathered}
(\blank)\ast(\blank):\Cob(Y_1,Y_2)\times\Cob(Y_0,Y_1)\to\Cob(Y_0,Y_2)
\quad,\\
(\blank)\otimes(\blank):\Cob(Y_0,Y_1)\times\Cob(Y'_0,Y'_1)\to\Cob(Y_0\amalg Y'_0,Y_1\amalg Y'_1)
\quad.
\end{gathered}
\]
In particular, $\Cob(\varnothing,\varnothing)$ is a symmetric monoidal category with $k$-bilinear monoidal product.
\end{lemma}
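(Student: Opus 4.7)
The plan is a three-stage descent: first extend the functors $\ast$ and $\otimes$ from \eqref{eq:h-composition} and \eqref{eq:cobfunc-disj} $k$-bilinearly to $k\mathbf{Cob}_2$, then verify they pass to the quotient by the relations $\mathcal L$, and finally lift them to the matrix categories $\operatorname{Mat}(k\mathbf{Cob}_2/\mathcal L) = \Cob$.

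Step one is formal. Since each hom-module $k\mathbf{Cob}_2(Y_0,Y_1)(W_0,W_1)$ is by construction the free $k$-module on the corresponding hom-set of $\mathbf{Cob}_2$, the bifunctors $\ast$ and $\otimes$ given on generators by gluing and disjoint union extend uniquely to $k$-bilinear bifunctors out of the product categories, by the universal property of free modules. Functoriality (i.e., respecting identities and vertical composition of $2$-bordisms) is inherited from \cref{lem:h-comp-unique} applied generator-wise.

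Step two is the main substantive point, and it comes down to the \emph{locality} of each relation in $\mathcal L$. The relations \ref{relK:S} and \ref{relK:T} differ only by a closed component ($S^2$ or $T^2$) sitting in the interior of the ambient $2$-bordism, and \ref{relK:4Tu} modifies a $2$-bordism only inside a small ball where two tubes can be reconnected; in all three cases the support of the modification may be taken disjoint from every boundary component. I would show that if $\sum_i c_i S_i \equiv 0 \pmod{\mathcal L}$ because the $S_i$ witness an S-, T-, or 4Tu-relation, then for any auxiliary $2$-bordism $S'$ compatible for gluing one has $\sum_i c_i (S' \ast S_i) \equiv 0$ and $\sum_i c_i (S_i \ast S'')\equiv 0$, and likewise for $\otimes$. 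This is because the gluing (resp.\ disjoint union) leaves the local region where the relation lives untouched, so the resulting bordisms still differ only by the \emph{same} local modification, i.e.\ still exhibit an instance of the same relation. Hence $\ast$ and $\otimes$ descend to well-defined $k$-bilinear bifunctors on $k\mathbf{Cob}_2/\mathcal L$.

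Step three uses the universal property of the $\operatorname{Mat}(-)$ construction: any $k$-bilinear functor $F:\mathcal A\times\mathcal B\to\mathcal D$ between $k$-linear categories extends uniquely, up to canonical isomorphism, to a $k$-bilinear functor $\operatorname{Mat}(\mathcal A)\times\operatorname{Mat}(\mathcal B)\to\operatorname{Mat}(\mathcal D)$ sending $\bigoplus_i A_i, \bigoplus_j B_j$ to $\bigoplus_{i,j}F(A_i,B_j)$ and acting on hom-matrices by the usual matrix-product rule. Applied to the two bifunctors from step two, this yields the claimed $\ast$ and $\otimes$ on $\Cob$. For the final assertion, the category $\mathbf{Cob}_2(\varnothing,\varnothing)$ is already symmetric monoidal under disjoint union with unit the empty manifold, the associator, unitor, and braiding being represented by canonical diffeomorphisms of $2$-bordisms; each of these structure isomorphisms is a single bordism, hence descends through the $k$-linear quotient of step two and lifts to $\operatorname{Mat}$ via block-diagonal matrices in step three, delivering a symmetric monoidal structure on $\Cob(\varnothing,\varnothing)$ whose tensor product is $k$-bilinear. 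The hard part is really the locality check in step two; everything else is bookkeeping driven by universal properties.
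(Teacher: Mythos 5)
The paper states \cref{lem:K-cobfuncs} without proof, treating it as routine bookkeeping, so there is no ``paper's own proof'' to compare against. Your three-stage descent --- free $k$-linear extension, passage through the quotient by $\mathcal L$ via the locality of the \ref{relK:S}, \ref{relK:T}, and \ref{relK:4Tu} relations, and the lift through $\operatorname{Mat}(-)$ using its universal property --- is exactly the argument the authors are implicitly invoking, and you correctly isolate the locality check in step two as the only non-formal ingredient: the three relations are supported in regions disjoint from the boundary, so horizontal gluing and disjoint union with an auxiliary $2$-bordism carry a relator to another relator of the same type. Your handling of the symmetric monoidal structure on $\Cob(\varnothing,\varnothing)$ by pushing the structure isomorphisms of $\mathbf{Cob}_2(\varnothing,\varnothing)$ through the same two stages is likewise correct.
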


Similarly, since the relations \ref{relK:S}, \ref{relK:T}, and \ref{relK:4Tu} are stable under orientation reversion, we also have functors below induced by \eqref{eq:cobfunc-rev}:
\begin{equation}
\label{eq:Kfunc-rev}
\begin{gathered}
\rho_0:\Cob(Y_0,Y_1)\to\Cob(\overbar{Y_0},\overbar{Y_1})
\quad,\\
\rho_2:\Cob(Y_0,Y_1)^{\opposite} \to \Cob(Y_0,Y_1)
\quad.
\end{gathered}
\end{equation}

We further extend these functors to complexes in the following way:
let $\mathcal A$, $\mathcal B$, and $\mathcal C$ be $k$-linear categories with $\mathcal C$ being additive.
If $F:\mathcal A\times\mathcal B\to\mathcal C$ is a $k$-bilinear functor, then, for bounded chain complexes $X$ in $\mathcal A$ and $Y$ in $\mathcal B$, we define a chain complex $F(X,Y)$ in $\mathcal C$ by setting
\[
\begin{gathered}
F(X,Y)^n
\coloneqq \bigoplus_{p+q=n}F(X^p,Y^q)
\quad,\\
d^n_{F(X,Y)}\coloneqq \sum_{p+q=n}\left(F(d_X^p,\mathrm{id}_Y)+(-1)^pF(\mathrm{id}_X,d_Y^q)\right)
\quad.
\end{gathered}
\]
We denote by $\mathbf{Ch}^{\mathsf b}(\mathcal A)$ the category of bounded chain complexes and chain maps in $\mathcal A$.
Then, the assignment above yields a $k$-bilinear functor
\[
F:\mathbf{Ch}^{\mathsf b}(\mathcal A)\times\mathbf{Ch}^{\mathsf b}(\mathcal B)\to \mathbf{Ch}^{\mathsf b}(\mathcal C)
\]
which extends the original $F$.
Applying the construction to the functors in \cref{lem:K-cobfuncs}, we in particular obtain functors
\begin{gather}
\label{eq:Kom-fun:hcomp}
(\blank)\ast(\blank):\mathbf{Ch}^{\mathsf b}(\Cob(Y_1,Y_2))\times\mathbf{Ch}^{\mathsf b}(\Cob(Y_0,Y_1))\to\mathbf{Ch}^{\mathsf b}(\Cob(Y_0,Y_2))
\quad,\\
\label{eq:Kom-fun:disj}
(\blank)\otimes(\blank):\mathbf{Ch}^{\mathsf b}(\Cob(Y_0,Y_1))\times\mathbf{Ch}^{\mathsf b}(\Cob(Y'_0,Y'_1))\to \mathbf{Ch}^{\mathsf b}(\Cob(Y_0\amalg Y'_0,Y_1\amalg Y'_1))
\quad.
\end{gather}
We also extend the functors $\rho_0$ and $\rho_2$ in \eqref{eq:Kfunc-rev} by
\begin{gather}
\label{eq:chain-rho0}
\begin{array}[t]{rccc}
  \rho_0:&\mathbf{Ch}^{\mathsf b}(\Cob(Y_0,Y_1))&\to&\mathbf{Ch}^{\mathsf b}(\Cob(\overbar{Y_0},\overbar{Y_1})) \\
       & \{X^i,d^i\} &\mapsto& \{\rho_0(X^i),\rho_0(d^i)\}_i
\end{array}
\quad,\\
\label{eq:chain-rho2}
\begin{array}[t]{rccc}
  \rho_2:&\mathbf{Ch}^{\mathsf b}(\Cob(Y_0,Y_1))^{\opposite}&\mapsto&\mathbf{Ch}^{\mathsf b}(\Cob(Y_0,Y_1)) \\
       & \{X^i,d^i\} &\mapsto& \{\rho_2(X^{-i}),\rho_2(d^{-i-1})\}_i
\end{array}
\quad.
\end{gather}

\subsection{The universal bracket complex}\label{sec:univ-Kh:univBracket}

In this section, we construct the   universal bracket complex of tangle diagrams.  To begin with, we define the modules of signs.

Let $\mathcal{S}$ be the totally ordered set.
For each subset $A \subset {\mathcal{S}}$, we set $E_A:=\varnothing\in\Cob(\varnothing,\varnothing)$, which is the unit in the monoidal structure.
For each $a\in\mathcal{S}$, we define the morphisms $\cra{a}$, $( \wedge a)$, and $(a \wedge )$ as follows.
Let $\mu_a=\#\{a'\in A\mid a'<a\}$ and $\nu_a=\#\{a'\in A\mid a'>a\}$.
Then, we set
\begin{gather}
\label{SymbolCheck}
\cra{a}: E_A \to E_{A \setminus \{ a \} }:=
\begin{cases*}
(-1)^{\mu_a} & if $a \in A$,\\
0 & if $a \notin A$,
\end{cases*}
\\
\label{SymbolLeftWedge}
(\wedge a): E_A \to E_{A \cup \{ a \} }:=
\begin{cases*}
(-1)^{\mu_a} & if $a \notin A$,\\
0 & if $a \in A$,
\end{cases*}
\\
\label{SymbolRightWedge}
(a\wedge):E_A \to E_{A \cup \{ a \} } :=
\begin{cases*}
(-1)^{\nu_a} & if $a \notin A$,\\
0 & if $a \in A$.
\end{cases*}
\end{gather}

\begin{notation}
We often denote $(a \wedge )$ by $a_{\dagger}$.
\end{notation}  

Let $D$ be a tangle diagram, which we regard as a planar graph with boundary neatly embedded in $\mathbb R\times[0,1]$.
We denote by $c(D)$ the set of crossings in $D$ and call each subset $s\subset c(D)$ a \emph{state} on $D$; we write $|s|$ the cardinality.
For each state $s$ on $D$, we write $D_s$ the compact $1$-dimensional neat submanifold of $\mathbb R\times[0,1]$ obtained by smoothing each crossing of $D$ according to $s$:
\[
\diagSmoothH
\;\xleftarrow[\text{$0$-smoothing}]{\displaystyle c\notin s}\;
\diagCrossNegWith{c}
\;\xrightarrow[\text{$1$-smoothing}]{\displaystyle c\in s}\;
\diagSmoothV
\quad.
\]
Hence, each $D_s$ is a neat submanifold of $\mathbb R\times[0,1]$.
We endow $D_s$ with an orientation as follows: recall that a \emph{checkerboard coloring} on the complement $(\mathbb R\times[0,1])\setminus D$ is a mapping
\[
\chi:\pi_0((\mathbb R\times[0,1])\setminus D)\to \{\text{white},\text{black}\}
\]
which distinguishes adjacent components.
If a checkerboard coloring $\chi$ on the complement of $D$ is fixed, it induces a checkerboard coloring on $(\mathbb R\times[0,1])\setminus D_s $ for each state $s$  which we also write $\chi$ by abuse of notation.
Then, we denote by $D_s^\chi$ the manifold $D_s$ equipped with the canonical orientation on the boundary of the black component with respect to $\chi$; i.e. $D_s^\chi= \partial(\chi^{-1}\{\text{black}\})$ as oriented manifolds.
Note that there are exactly two checkerboard colorings.
Namely, if $\chi$ is a checkerboard coloring on $(\mathbb R\times[0,1])\setminus D$, then the other is obtained by swapping all the values of $\chi$, which we denote by $-\chi$.
In this case, the $1$-manifold $D_s^{-\chi}$ is identified with $D_s^\chi$ with the reversed orientation.

Since the induced orientation on the boundary $\partial D_s^\chi$ does not depend on the state $s$, we in particular write $\partial D^\chi\coloneqq\partial D_\varnothing^\chi$ and
\[
\partial^- D^\chi\coloneqq \overbar{\partial D^\chi\cap(\mathbb R\times\{0\})}
\ ,\quad \partial^+ D^\chi\coloneqq \partial D^\chi\cap(\mathbb R\times\{1\})
\quad.
\]
In fact, the orientations on them are determined locally by the rules  as in \cref{fig:D-bndry-ori}.
\begin{figure}[tbp]
\centering
\begin{tikzpicture}
\node[left] at (0,0) {$\mathbb R\times\{1\}$};
\node[above=1ex,blue] at (3.5,0) {$\partial^+D^\chi$};
\fill[lightgray] (2,0) rectangle (.5,-1);
\fill[lightgray] (5,0) rectangle (6.5,-1);
\draw (0,0) -- (7,0);
\draw[red,very thick] (2,0) -- (2,-1);
\draw[red,very thick] (5,0) -- (5,-1);
\fill[blue] (2,0) circle(.15) node[above]{$+$};
\fill[blue] (5,0) circle(.15) node[above]{$-$};
\end{tikzpicture}
\qquad
\begin{tikzpicture}
\node[left] at (0,0) {$\mathbb R\times\{0\}$};
\node[below=1ex,blue] at (3.5,0) {$\partial^-D^\chi$};
\fill[lightgray] (2,0) rectangle (.5,1);
\fill[lightgray] (5,0) rectangle (6.5,1);
\draw (0,0) -- (7,0);
\draw[red,very thick] (2,0) -- (2,1);
\draw[red,very thick] (5,0) -- (5,1);
\fill[blue] (2,0) circle(.15) node[below]{$+$};
\fill[blue] (5,0) circle(.15) node[below]{$-$};
\end{tikzpicture}
\caption{The orientation on $\partial^-D^\chi$ and $\partial^+D^\chi$.}
\label{fig:D-bndry-ori}
\end{figure}
Thus, for each state $s$ on $D$, we may regard $D_s^\chi$ as an object of the category $\mathbf{Cob}_2(\partial^-D^\chi,\partial^+D^\chi)$ and hence of $\Cob(\partial^-D^\chi,\partial^+D^\chi)$.

For a tangle diagram $D$ with a checkerboard coloring $\chi$ on $(\mathbb R\times[0,1])\setminus D$, we define a graded $k$-module $\brcOf*{D^\chi}$ by
\[
\brcOf*{D^\chi}^i
\coloneqq \bigoplus_{s\subset c(D),\,|s|=i} D_s^\chi\otimes E_s
\in \Cob(\partial^-D^\chi,\partial^+D^\chi)
\]
for each integer $i\in\mathbb Z$.
We in addition endow $\brcOf*{D^\chi}$ with a differential as follows:
for each pair $(s,c)$ of a state $s\subset c(D)$ and a crossing $c\in c(D)$ with $c\notin s$, notice that, $D_s$ and $D_{s\cup\{c\}}$ is identical except on a neighborhood of the crossing $c$ where they are of the following forms regardless of the orientation:
\[
\begin{array}{c|c|c}
  D & D_s& D_{s\cup\{c\}} \\\hline\rule{0pt}{5ex}
  \diagCrossNegWith{c}\; & \;\diagSmoothH\; & \;\diagSmoothV
\end{array}
\quad.
\]
We define a cobordism $S_{s;c}:D_s\to D_{s\cup\{c\}}$ by
\begin{equation}
\label{eq:saddle-cob}
\BordDeltaN
\;:\;
\diagSmoothH
\;\rightarrow\; \diagSmoothV
\end{equation}
on the neighborhood and the identity elsewhere.
Thanks to the stability of checkerboard colorings under smoothing, $S_{s;c}$ has an obvious orientation which make $S_{s;c}$ as an oriented cobordism $D_s^\chi\to D_{s\cup\{c\}}^\chi$.
Hence, we obtain a morphism
\[
S_{s;c}\otimes(\wedge c):D_s^\chi\otimes E_s\to D_{s\cup\{c\}}^\chi\otimes E_{s\cup\{c\}}
\quad.
\]
We then define the differential by
\begin{equation}
\label{eq:brc-diff}
d^i\coloneqq \sum_{s\subset c(D),\,|s|=i,\,c\in c(D)\setminus s} S_{s;c}\otimes(\wedge c):\brcOf*{D^\chi}^i\to \brcOf*{D^\chi}^{i+1}
\quad.
\end{equation}
We call $\brcOf*{D^\chi}$ the \emph{universal bracket complex} of $D$.

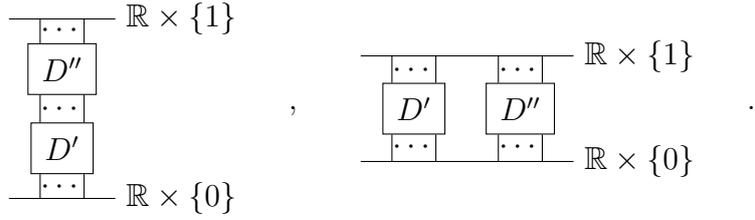
\begin{figure}[t]
\centering
\begin{tikzpicture}[baseline=(current bounding box.center)]
\node[rectangle,draw,inner sep=1ex] (DU) at (0,.75) {$D''$};
\node[rectangle,draw,inner sep=1ex] (DD) at (0,-.75) {$D'$};
\node at (0,0) {$\cdots$};
\node[below=1.5ex] at (DD) {$\cdots$};
\node[above=1.5ex] at (DU) {$\cdots$};
\draw (-1,1.7) -- (1,1.7) node[right]{$\mathbb R\times\{1\}$};
\draw (-1,-1.7) -- (1,-1.7) node[right]{$\mathbb R\times\{0\}$};
\draw[transform canvas={xshift=-.7em}] (0,-1.7) -- (DD) -- (DU) -- (0,1.7);
\draw[transform canvas={xshift=.7em}] (0,-1.7) -- (DD) -- (DU) -- (0,1.7);
\end{tikzpicture}
\quad,\qquad
\begin{tikzpicture}[baseline=(current bounding box.center)]
\node[draw,rectangle,inner sep=1ex] (DL) at (-1,0) {$D'$};
\node[draw,rectangle,inner sep=1ex] (DR) at (1,0) {$D''$};
\node[above=1.5ex] at (DL) {$\cdots$};
\node[below=1.5ex] at (DL) {$\cdots$};
\node[above=1.5ex] at (DR) {$\cdots$};
\node[below=1.5ex] at (DR) {$\cdots$};
\draw (-2,1) -- (2,1) node[right]{$\mathbb R\times\{1\}$};
\draw (-2,-1) -- (2,-1) node[right]{$\mathbb R\times\{0\}$};
\draw[transform canvas={xshift=-.7em}] (-1,-1) -- (DL) -- (-1,1);
\draw[transform canvas={xshift=.7em}] (-1,-1) -- (DL) -- (-1,1);
\draw[transform canvas={xshift=-.7em}] (1,-1) -- (DR) -- (1,1);
\draw[transform canvas={xshift=.7em}] (1,-1) -- (DR) -- (1,1);
\end{tikzpicture}
\quad.
\caption{The composition (left) and the tenser product (right) of  tangle diagrams.}\label{fig:Comp_tangle}
\end{figure}

The following results show that the universal bracket complex respects the operation on tangles in terms of the functors (\ref{eq:Kom-fun:hcomp}) and (\ref{eq:Kom-fun:disj}). 

\begin{proposition}[{\cite[Theorem~2]{BarNatan2005}}]
\label{prop:TangleAst}
Let $D$ be the composition of two tangle diagrams $D'$ and $D''$ as in \cref{fig:Comp_tangle}.
For a checkerboard coloring $\chi$ on $(\mathbb R\times[0,1])\setminus D$, let us write $\chi'$ and $\chi''$ respectively the induced coloring on the complements of $D'$ and $D''$.
Then, there is an isomorphism
\begin{equation}
\label{eq:UKH-tang-comp}
\brcOf*{D^\chi}\cong\brcOf*{D'^{\chi'}}\ast\brcOf*{D''^{\chi''}}
\end{equation}
in the category $\mathbf{Ch}^{\mathsf b}(\Cob(\partial^-D^\chi,\partial^+D^\chi))$.  
\end{proposition}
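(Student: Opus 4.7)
The plan is to construct the isomorphism by identifying summands on the two sides and then verifying compatibility of differentials. The key input is that the crossings split as a disjoint union $c(D) = c(D') \sqcup c(D'')$, so every state $s \subset c(D)$ decomposes uniquely as $s = s' \sqcup s''$ with $s' \subset c(D')$ and $s'' \subset c(D'')$. I would fix a total order on $c(D)$ placing all elements of $c(D')$ before those of $c(D'')$ (compatibly with the chosen composition convention) and restricting on each half to the orders used for $D'$ and $D''$ individually.

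At the level of objects, for each decomposable state $s = s' \sqcup s''$, the smoothing $D_s$ is obtained by gluing $D'_{s'}$ and $D''_{s''}$ along $\partial^+ D' = \partial^- D''$, since smoothing is supported in a neighborhood of a single crossing. The checkerboard coloring $\chi$ on $(\mathbb{R}\times[0,1])\setminus D$ restricts to colorings $\chi'$ and $\chi''$ on the two halves which necessarily agree along the interface, and the oriented boundary of the black region is a local notion, so the orientations glue to give $D_s^\chi \cong D'^{\chi'}_{s'} \ast D''^{\chi''}_{s''}$. Combining this with the interchange law between disjoint union $\otimes$ and gluing $\ast$ (both being $k$-bilinear by \cref{lem:K-cobfuncs}), the bijection on states yields a natural identification of graded objects
\[
\brcOf*{D^\chi}^n \;\cong\; \bigoplus_{p+q=n}\bigoplus_{\substack{|s'|=p\\ |s''|=q}}\bigl(D'^{\chi'}_{s'}\otimes E_{s'}\bigr) \ast \bigl(D''^{\chi''}_{s''}\otimes E_{s''}\bigr),
\]
which matches the tensor-product complex on the right-hand side of \eqref{eq:UKH-tang-comp}.

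Next, I would verify that the differentials match. Each saddle generator $S_{s;c}$ appearing in \eqref{eq:brc-diff} is supported in a neighborhood of the single crossing $c$, so it decomposes as $S_{s';c} \ast \mathrm{id}_{D''^{\chi''}_{s''}}$ when $c \in c(D')$, and as $\mathrm{id}_{D'^{\chi'}_{s'}} \ast S_{s'';c}$ when $c \in c(D'')$; the orientations on the pieces come from the same local checkerboard data and so glue correctly. The combinatorial sign $(-1)^{\mu_c}$ appearing in $(\wedge c)$ also splits along the chosen total order: for $c\in c(D')$ it equals the internal sign $(-1)^{\mu'_c}$ used in the differential of $\brcOf*{D'^{\chi'}}$, while for $c \in c(D'')$ it equals $(-1)^{|s'|+\mu''_c}$. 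The extra factor $(-1)^{|s'|}$ is precisely the Koszul sign $(-1)^p$ prescribed by the tensor-product differential $d^p_X\ast\mathrm{id}_Y + (-1)^p\mathrm{id}_X\ast d^q_Y$, so the two differentials coincide under the identification above.

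The main obstacle is the orientation and sign bookkeeping. Concretely, one must check that the saddle $S_{s;c}$, oriented from the checkerboard coloring of $D$ near $c$, is diffeomorphic as an oriented $2$-bordism to the gluing of the corresponding saddle in the appropriate half with the identity on the other half; this in turn requires verifying that the checkerboard colorings $\chi'$ and $\chi''$ agree at the interface for every state, which reduces to the fact that $\chi$ is a single coloring of the full complement. Once this orientation compatibility is established, the sign-splitting argument above is essentially mechanical and the isomorphism follows.
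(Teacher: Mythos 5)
Your argument is correct and is essentially the standard decomposition-by-states proof: the paper itself gives no proof here but simply cites Bar-Natan's Theorem~2, and your reconstruction is what that proof looks like. Two small remarks. First, the ordering convention ($c(D')$ entirely before $c(D'')$) is exactly what is needed to make $E_{s'}\otimes E_{s''}\cong E_{s}$ without an extra shuffle sign, and your case analysis $\mu_c=\mu'_c$ versus $\mu_c=|s'|+\mu''_c$ does line up with the Koszul sign $(-1)^p$ in the formula $d_{F(X,Y)}=F(d_X,\mathrm{id})+(-1)^pF(\mathrm{id},d_Y)$, so the sign bookkeeping is sound. Second, note that with the paper's convention $W'\ast W$ (second argument glued on the bottom), and with $D'$ drawn below $D''$ in \cref{fig:Comp_tangle}, the gluing should be written $D''\ast D'$; the ordering in the stated isomorphism $\brcOf*{D'^{\chi'}}\ast\brcOf*{D''^{\chi''}}$ appears to be reversed relative to this (the later Theorem~\ref{theo:singtang-comp} uses the order $\brcOf{D''}\ast\brcOf{D'}$). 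This discrepancy is inherited from the paper's statement, not a gap in your reasoning, but it is worth fixing to keep $X=\brcOf{D'}$ (so $p=|s'|$) consistent with the ordering choice and with the direction of $\ast$.
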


\begin{proposition}[{\cite[Theorem~2]{BarNatan2005}}]
\label{prop:TangleOtimes}
Let $D$ be the tensor product of tangle diagrams $D'$ and $D''$ as in \cref{fig:Comp_tangle}.
For a checkerboard coloring $\chi$ on $(\mathbb R\times[0,1])\setminus D$, we write $\chi'$ and $\chi''$ respectively the induced coloring on the complements of $D'$ and $D''$.
Then, there is an isomorphism
\[
\brcOf*{D^\chi}
\cong \brcOf*{D'^{\chi'}}\otimes\brcOf*{D''^{\chi''}}
\]
in the category $\mathbf{Ch}^{\mathsf b}(\Cob(\partial^-D^\chi,\partial^+D^\chi))$.
\end{proposition}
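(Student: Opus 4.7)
The plan is to mirror the argument already available for \cref{prop:TangleAst}, replacing horizontal gluing along $\mathbb R \times \{t\}$ with horizontal disjoint union of neat submanifolds. First, I would fix a total order on $c(D) = c(D') \sqcup c(D'')$ that places every crossing of $D'$ before every crossing of $D''$; this is consistent with restricting to the induced total orders on $c(D')$ and $c(D'')$. Under this ordering, states $s \subset c(D)$ correspond bijectively to pairs $(s', s'')$ with $s' \subset c(D')$ and $s'' \subset c(D'')$ via $s = s' \sqcup s''$, with $|s| = |s'| + |s''|$.

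Next, for each such $s$, the smoothed diagram $D_s$ is, by construction, the horizontal juxtaposition of $D'_{s'}$ and $D''_{s''}$ as neat $1$-submanifolds of $\mathbb R \times [0,1]$. Because the checkerboard coloring $\chi$ restricts to $\chi'$ on the complement of $D'$ and to $\chi''$ on the complement of $D''$, and because the black regions of $D$ are exactly the union of the black regions of $D'$ and $D''$, the oriented manifold $D^\chi_s$ is canonically identified with $D'^{\chi'}_{s'} \otimes D''^{\chi''}_{s''}$ inside $\Cob(\partial^-D^\chi, \partial^+D^\chi)$. On the sign side, the chosen order factors $E_s$ as $E_{s'} \otimes E_{s''}$. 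Collecting by total homological degree then yields the desired identification on objects:
\[
\brcOf*{D^\chi}^n
\cong \bigoplus_{p+q=n} \brcOf*{D'^{\chi'}}^p \otimes \brcOf*{D''^{\chi''}}^q,
\]
which matches the construction of the tensor product of chain complexes preceding \eqref{eq:Kom-fun:disj}.

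Third, I would check that the differentials agree. For any crossing $c \in c(D) \setminus s$ there are two cases. If $c \in c(D')$, the saddle $S_{s;c}$ acts only on the left factor and equals $S_{s';c} \otimes \mathrm{id}_{D''_{s''}}$; furthermore, since $c$ precedes every element of $c(D'')$ in our total order, the count $\mu_c$ used in $(\wedge c)$ involves only elements of $s'$, so this summand is exactly $d_{X'} \otimes \mathrm{id}_{X''}$. If instead $c \in c(D'')$, the cobordism is $\mathrm{id}_{D'_{s'}} \otimes S_{s'';c}$, but now $\mu_c$ picks up every element of $s'$ \emph{together with} the elements of $s''$ less than $c$, contributing an extra sign $(-1)^{|s'|} = (-1)^p$ compared with the intrinsic wedge on $D''$. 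This is precisely the Koszul sign required in the formula for $d_{F(X', X'')}$.

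The main obstacle is exactly this sign bookkeeping; the point is that Bar-Natan's sign modules $E_s$ are designed so that once a compatible total order is chosen, the Koszul signs of the tensor product of complexes emerge automatically. Naturality of the resulting isomorphism with respect to the ambient category structure is then clear, so the isomorphism lives in $\mathbf{Ch}^{\mathsf b}(\Cob(\partial^-D^\chi, \partial^+D^\chi))$ as claimed. This completes the proposed proof.
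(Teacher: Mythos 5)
Your proposal is correct. Note that the paper itself gives no proof of this proposition---it is cited directly from Bar-Natan's Theorem~2---so there is no in-text argument to compare against; what you wrote is the standard verification that the citation leaves implicit, and it is complete. In particular, the one genuinely subtle point is handled correctly: with the compatible total order placing $c(D')$ before $c(D'')$, the count $\mu_c$ in $(\wedge c)$ for $c \in c(D'')$ includes all of $s'$, contributing the extra factor $(-1)^{|s'|} = (-1)^p$, which is exactly the Koszul sign appearing in the definition of the tensor-product differential $d_{F(X,Y)} = \sum (F(d_X,\mathrm{id}) + (-1)^p F(\mathrm{id},d_Y))$ preceding \eqref{eq:Kom-fun:disj}. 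The remaining identifications (state bijection $s \leftrightarrow (s',s'')$, disjoint-union decomposition of the smoothed diagram, restriction of the checkerboard coloring and the induced orientations, and $E_s \cong E_{s'} \otimes E_{s''}$) are all routine and correctly stated.
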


\subsection{The universal Khovanov complex}\label{sec:univ-Kh:univKh}

We now introduce the complex $\KhOf*{D}$ that is an invariant of tangles.
We always assume a tangle $T$ to be ``generic'' so that the image of $T$ under the projection $\mathbb R^2\times[0,1]\to\mathbb R\times[0,1]$ defines a tangle diagram $D$; in this case, we call $D$ \emph{the} diagram of $T$.
We say that two tangles are \emph{isotopic} if they are connected by an ambient smooth isotopy which is the identity on the boundary.
A connected component of $(\mathbb R\times[0,1])\setminus D$ is said to be \emph{negatively unbounded} if it contains the point $(-x,\frac12)$ for arbitrarily large $x>0$.

\begin{notation}\label{NotationShift}
If $W=\{W^{i}, d^i\}_{i}$ is a chain complex, then we define a chain complex $W[k]$ by
\[
W[k]^{i}
\coloneqq W^{i-k}
\ ,\quad
d^i_{W[k]} \coloneqq (-1)^k d^i
\quad.
\] 
\end{notation}

\begin{definition}
Let $D$ be a tangle diagram with  $n_-$ negative crossings.
Let $\chi_w$ be the checkerboard coloring with negatively unbounded white component.
Then, we set
\begin{equation}
\label{eq:univKh-def}
\KhOf*{D}\coloneqq \brcOf*{D^{\chi_w}}[-n_-]
\in\Cob(\partial^-D^{\chi_w},\partial^+D^{\chi_w})
\end{equation}
and call it the \emph{universal Khovanov complex} of $D$.
\end{definition}

If $D$ and $D'$ are the diagrams of two isotopic tangles, then the restriction on the isotopies guarantees that $\partial D=D\cap(\mathbb R\times\{0,1\})$ and $\partial D'=D'\cap(\mathbb R\times\{0,1\})$ are mutually identical.
This in particular implies that we have
\[
\partial D^{\chi}=\partial {D'}^{\chi'}
\]
as oriented $0$-manifolds provided $\chi$ and $\chi'$ have the same color at the negatively unbounded components.
It follows that $\KhOf*{D}$ and $\KhOf*{D'}$ lie in the same category.

\begin{theorem}[{\cite[Theorem~1]{BarNatan2005}}]\label{thm:Bar-Natan}
The homotopy type of $\KhOf*{D}^i$ is an isotopy invariant of tangles.
\end{theorem}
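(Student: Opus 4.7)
The plan is to reduce the statement to a local verification of the three Reidemeister moves. By the standard fact that two tangles in $\mathbb R^2\times[0,1]$ are ambient isotopic (rel.~boundary) if and only if their generic diagrams are related by a finite sequence of planar isotopies and Reidemeister moves of types \RomNum1, \RomNum2, \RomNum3, it suffices to check that the homotopy type of $\KhOf*{D}$ is preserved under each of these moves. Planar isotopies do not change $D$ as a tangle diagram and only permute the set $c(D)$ of crossings, hence affect the bracket complex only up to the canonical isomorphisms furnished by \cref{prop:TangleAst,prop:TangleOtimes}, so they act trivially on the homotopy type.

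Since each Reidemeister move is supported in a disk, I would first use \cref{prop:TangleAst,prop:TangleOtimes} to factor $\brcOf*{D^\chi}$ as the composition/tensor product of the complex associated to the local tangle diagram inside the disk with the complex of the complementary tangle diagram. The induced functors $(\blank)\ast(\blank)$ and $(\blank)\otimes(\blank)$ on $\mathbf{Ch}^{\mathsf b}$ are compatible with chain homotopy equivalence (they are $k$-bilinear and send chain homotopies to chain homotopies), so it remains to exhibit a chain homotopy equivalence between the local bracket complexes on each side of a Reidemeister move and then track the shift by $n_-$ in \eqref{eq:univKh-def} arising from the changes of crossing signs.

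I would then treat each move in turn. For \RomNum1, the local complex has two terms: the smoothed kink (a circle attached to a strand) in one degree and the other smoothing in the adjacent degree, joined by a saddle; here the \ref{relK:S} relation lets one split off a contractible summand via a \emph{strong deformation retract} consisting of a ``cap'' and ``cup'' with a correction term built from a tube, with the shift $[-n_-]$ exactly absorbing the change in negative crossings. For \RomNum2, the local complex is a four-term square in which two of the four vertices are related by an identity cobordism; a standard Gauss-elimination argument (delta-cancellation) contracts the identity edge and leaves a complex isomorphic to that of the two parallel strands. For \RomNum3, one writes both sides of the move using \cref{prop:TangleAst,prop:TangleOtimes} as iterated extensions built from the \RomNum2 pattern, and the composition of the \RomNum2 equivalences on either side yields mutually inverse homotopy equivalences between the two sides of \RomNum3.

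The principal obstacle is the \RomNum3 case, where naive composition of the \RomNum2 homotopy equivalences does \emph{not} literally produce identical cobordisms on the two sides; the discrepancy is precisely a four-term sum of cobordisms on the shared boundary pattern, which is killed by the \ref{relK:4Tu} relation. Thus the role of \ref{relK:4Tu} (together with \ref{relK:S} and \ref{relK:T} which govern \RomNum1 and \RomNum2) is indispensable, and verifying that all auxiliary homotopies can be chosen coherently with the checkerboard orientations and the sign conventions \eqref{SymbolCheck}--\eqref{SymbolRightWedge} on the modules $E_s$ is the delicate bookkeeping step. Since all of this is carried out in detail in \cite{BarNatan2005}, the present proof can simply invoke that result once the formalism of \cref{sec:univ-Kh:K,sec:univ-Kh:univBracket,sec:univ-Kh:univKh} has been shown to agree with Bar-Natan's up to the orientation conventions recorded in \cref{fig:D-bndry-ori}.
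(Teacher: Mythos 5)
Your proposal is correct and follows the same route as the paper, which does not reprove this statement but simply cites Bar-Natan's Theorem~1; your sketch (locality via \cref{prop:TangleAst,prop:TangleOtimes}, then \RMove1/\RMove2/\RMove3 handled by the \ref{relK:S}, \ref{relK:T}, and \ref{relK:4Tu} relations respectively) is an accurate outline of that cited proof. No gap to report.
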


\subsection{The universal bracket complex as a mapping cone}

\begin{definition}[mapping cone]\label{DefMappingCone}
Let $\mathcal A$ be an additive  category.
If $f:X\to Y$ is a chain map  between chain complexes in $\mathcal A$, then the \emph{mapping cone} $\operatorname{Cone}(f)$ is a chain complex defined as follows:
\begin{itemize}
  \item as an object of $\mathcal A$, we have
\[
\operatorname{Cone}(f)^i = Y^i\oplus X^{i+1}\ ;
\]
  \item the differential $d^i=d^i_{\operatorname{Cone}(f)}:\operatorname{Cone}(f)^i\to\operatorname{Cone}(f)^{i+1}$ is presented by the matrix
\[
d^i_{\operatorname{Cone}(f)}\coloneqq
\begin{pmatrix}
d^i_Y & f \\
0 & -d^{i+1}_X
\end{pmatrix}
:Y^i\oplus X^{i+1}\to Y^{i+1}\oplus X^{i+2}
\quad.
\]
\end{itemize}
\end{definition}

Since $\operatorname{Cone}(f)$ is actually a chain complex, we call it  the \emph{mapping cone} of $f$.

For a tangle diagram $D$, fix a crossing $c \in c(D)$, and set $D^{(0)}$ and $D^{(1)}$ the diagrams obtained from $D$ by applying $0$- and $1$-smoothing to $c$ respectively.
We hence have a canonical identification $c(D^{(0)})=c(D^{(1)})=c(D)\setminus\{c\}$.
Then, the saddle cobordism induces the morphism
\[
\delta_c \coloneqq \BordDeltaN
\;:\;
\brcOf*{{D^{(0)}}^{\chi}} \to \brcOf*{{D^{(1)}}^{\chi}}
\quad.
\]

\begin{proposition}\label{prop:coneVSbracket}
In the situation above, there is an isomorphism
\[
\brcOf*{D}\cong\operatorname{Cone}(-\delta_c)[1]
\quad.
\]
\end{proposition}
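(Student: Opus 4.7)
The plan is to decompose the universal bracket complex of $D$ according to whether a given state $s \subset c(D)$ contains the distinguished crossing $c$, and then read off the mapping cone structure from the resulting block-matrix form of the differential.

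First, I would observe that the set of states on $D$ partitions into two families, those with $c \notin s$ and those with $c \in s$. In the first case, $s$ can be viewed as a state on $D^{(0)}$, and since $0$-smoothings at crossings away from $c$ commute with the $0$-smoothing at $c$, we have $D_s = (D^{(0)})_s$. In the second case, writing $s = s' \cup \{c\}$ with $s' \subset c(D)\setminus\{c\} = c(D^{(1)})$, we similarly get $D_s = (D^{(1)})_{s'}$. Collecting degrees (noting the size shift by one in the second family), we obtain a natural splitting of graded objects
\[
\brcOf*{D^\chi}^i \;\cong\; \brcOf*{{D^{(0)}}^{\chi}}^{i} \;\oplus\; \brcOf*{{D^{(1)}}^{\chi}}^{i-1}.
\]

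Next I would analyze the differential in this decomposition. To simplify the sign bookkeeping I would choose the total order on $\mathcal{S}$ so that $c$ is the least element among $c(D)$; this is harmless because a different choice of ordering yields a canonically isomorphic complex. With this choice, $\mu_c(s) = 0$ for every state $s$, and for $c' \neq c$ one has $\mu_{c'}(s \cup \{c\}) = \mu_{c'}(s) + 1$. Splitting the sum $\sum_{s,c'} S_{s;c'} \otimes (\wedge c')$ from \eqref{eq:brc-diff} according to the four cases $(c \in s$ or not$)\times(c' = c$ or not$)$, one finds:
\begin{itemize}
\item the $(c \notin s,\, c' \neq c)$ terms reassemble into $d_{D^{(0)}}$,
\item the $(c \in s,\, c' \neq c)$ terms reassemble into $-d_{D^{(1)}}$, the extra sign coming from the $+1$ shift in $\mu_{c'}$,
\item the $(c \notin s,\, c' = c)$ terms assemble into $\delta_c$ with coefficient $+1$ since $\mu_c = 0$,
\item the $(c \in s,\, c' = c)$ terms vanish because $c \in s$.
\end{itemize}
Hence, in the block form with $\brcOf*{{D^{(0)}}^\chi}^\bullet$ listed first, the differential of $\brcOf*{D^\chi}$ is
\[
\begin{pmatrix} d_{D^{(0)}} & 0 \\ \delta_c & -d_{D^{(1)}} \end{pmatrix}.
\]

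Finally I would compare this with the mapping cone. Unfolding \cref{DefMappingCone} for $f = -\delta_c$ and then applying the shift of \cref{NotationShift} with $k=1$ (which multiplies the differential by $-1$), one obtains for $\operatorname{Cone}(-\delta_c)[1]^i = \brcOf*{{D^{(1)}}^{\chi}}^{i-1} \oplus \brcOf*{{D^{(0)}}^{\chi}}^{i}$ the matrix
\[
\begin{pmatrix} -d_{D^{(1)}} & \delta_c \\ 0 & d_{D^{(0)}} \end{pmatrix},
\]
which, after swapping the two summands (an evident isomorphism of complexes), is identical to the block-matrix description above. This provides the desired isomorphism $\brcOf*{D} \cong \operatorname{Cone}(-\delta_c)[1]$.

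The main obstacle is the sign accounting in the wedge maps; choosing $c$ to be the minimal element of $\mathcal{S}$ reduces all the signs to the single $(-1)$ that turns $d_{D^{(1)}}$ into $-d_{D^{(1)}}$, so that the block decomposition of the differential matches the cone–shift matrix on the nose.
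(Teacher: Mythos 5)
Your proof is correct and takes essentially the same approach as the paper: decompose states according to whether they contain $c$, identify the two pieces with $\brcOf{(D^{(0)})^\chi}$ and $\brcOf{(D^{(1)})^\chi}[-1]$, and match the block form of the differential against $\Cone(-\delta_c)[1]$. The only cosmetic difference is that you normalize by taking $c$ to be minimal in $\mathcal S$ so that every instance of $(\wedge c)$ carries sign $+1$, whereas the paper keeps the ordering arbitrary and builds the sign into the identification by using $\mathrm{id}\otimes(\wedge c)$ on the $D^{(1)}$-summand; both devices account for the same signs.
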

\begin{proof}
Note that, for each $s\subset c(D)\setminus\{c\}$, there are identifications
\[
(D^{(0)}_s)^\chi = D^\chi_s
\ ,\quad (D^{(1)}_s)^\chi = D^\chi_{s\amalg\{c\}}
\quad.
\]
We hence define a morphism $\brcOf{(D^{(1)})^\chi}\oplus\brcOf{(D^{(0)})^\chi}\to\brcOf{D^\chi}$ consisting of
\[
\mathrm{id}\otimes(\wedge c):(D^{(1)}_s)^\chi\otimes E_s \to D^\chi_{s\amalg\{c\}}\otimes E_{s\amalg\{c\}}
\ ,\quad
\mathrm{id}\otimes\mathrm{id}:(D^{(0)}_s)^\chi\otimes E_s\to D^\chi_s\otimes E_s
\quad.
\]
By comparing the differentials, one can easily verify that this is actually an isomorphism of chain complexes.
\end{proof}

\subsection{Duality with respect to mirroring}
\label{sec:univ-Kh:dual}

To conclude the section, we see the dualities of the universal Khovanov complex in terms of functors \eqref{eq:chain-rho0} and \eqref{eq:chain-rho2}.
In order to establish them, we need some technical materials on the modules of signs.
Let $\mathcal S$ be a finite totally ordered set, say $n=|\mathcal S|$.
For a subset $A\subset\mathcal S$, we write $\varepsilon_A$ the sign of the $(|A|,n-|A|)$-shuffle and think of it as a morphism
\[
\varepsilon_A:E_A\to E_{\mathcal S\setminus A}
\in \Cob(\varnothing,\varnothing)
\quad.
\]
It then turns out that the diagram below commutes:
\begin{equation}
\label{eq:eps-dual}
\begin{tikzcd}[column sep=4em]
E_{A\amalg\{c\}} \ar[r,"\cra{c}"] \ar[d,"\varepsilon_{A\amalg\{c\}}"'] & E_A \ar[d,"\varepsilon_A"] \\
E_{(\mathcal S\setminus A)\setminus\{c\}} \ar[r,"(-1)^{n-1}(\wedge c)"] & E_{\mathcal S\setminus A}
\end{tikzcd}
\quad.
\end{equation}
In terms of the universal bracket complex, the dualities are stated as follows.

\begin{proposition}\label{prop:bracket-dual}
Let $D^{\mathsf{mir}}$ be the mirror image of a tangle diagram $D$ with $n$ crossings.
Then, for every checkerboard  coloring $\chi$, there are isomorphisms
\[
\rho_0 (\brcOf*{D^{\chi}}) \cong \brcOf*{D^{-\chi}}
\ ,\quad
\rho_2 (\brcOf*{D^{\chi}})  \cong \brcOf*{(D^{\mathsf{mir}})^\chi}[-n]
\quad.
\]
\end{proposition}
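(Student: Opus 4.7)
The plan is to construct both isomorphisms term-by-term and verify commutation with differentials. Each term of $\brcOf*{D^\chi}$ factors as $D_s^\chi \otimes E_s$, so both checks split into a geometric piece (matching smoothings and saddles) and a combinatorial piece (matching sign modules).

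For $\rho_0(\brcOf*{D^\chi}) \cong \brcOf*{D^{-\chi}}$, the point is that flipping the checkerboard coloring reverses the canonical orientation on every smoothing, so $D_s^{-\chi} = \overline{D_s^\chi} = \rho_0(D_s^\chi)$; meanwhile the sign modules $E_s$ live in $\Cob(\varnothing,\varnothing)$ and are unchanged. Each saddle with respect to $-\chi$ is, by its very definition via the black region, the orientation reverse of the corresponding saddle for $\chi$, which is precisely $\rho_0$ of the latter, so the termwise identity intertwines the differentials and yields an isomorphism of complexes.

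For the second isomorphism, I would exploit that the mirror reflection preserves the underlying planar diagram but swaps the $0$- and $1$-smoothings at every crossing. This yields a bijection $s \mapsto s' := c(D)\setminus s$ between states of $D$ and of $D^{\mathsf{mir}}$ satisfying $(D^{\mathsf{mir}})_{s'}^\chi = D_s^\chi$ and $|s'| = n - |s|$. The second equality matches degree $i$ of $\rho_2(\brcOf*{D^\chi})$ (states with $|s|=-i$) with degree $i$ of $\brcOf*{(D^{\mathsf{mir}})^\chi}[-n]$ (states with $|s'|=n+i$). As the degree-wise isomorphism I take the identity on the $1$-manifold tensored with the shuffle-sign map $\varepsilon_s : E_s \to E_{c(D)\setminus s}$, possibly twisted by a global degree-dependent sign. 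For the differentials: $\rho_2$ reverses each saddle $S_{t\setminus\{c\};c}$ (yielding $\overline{S_{t\setminus\{c\};c}}$) and sends $(\wedge c)$ to $\cra{c}$, since the empty $2$-bordism is self-dual as a morphism in $\Cob(\varnothing,\varnothing)$; meanwhile, under the state bijection, the mirror saddle $S^{\mathsf{mir}}_{s';c}$ is the same $2$-bordism traversed in the opposite direction, and its canonical orientation induced by the common black region of $\chi$ agrees with $\overline{S_{t\setminus\{c\};c}}$.

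The main obstacle is the sign bookkeeping in the second isomorphism: the per-crossing sign $(-1)^{n-1}$ produced by~\eqref{eq:eps-dual} when $\varepsilon$ converts $\cra{c}$ into $(\wedge c)$ must balance against the shift sign $(-1)^n$ from $[-n]$, and this requires choosing an appropriate degree-dependent global sign in the isomorphism. Once this combinatorial choice is made, the commutativity of each differential square reduces to~\eqref{eq:eps-dual} applied crossing by crossing, and the claim follows.
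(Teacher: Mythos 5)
Your proposal is correct and follows essentially the same route as the paper: identify states $s \leftrightarrow c(D)\setminus s$, match smoothings via $(D^{\mathsf{mir}}_{\overbar s})^\chi = D_s^\chi$, use $\mathrm{id}\otimes\varepsilon_s$ with a degree-dependent sign twist, and reduce commutativity with differentials to~\eqref{eq:eps-dual}. The paper pins the twist down as $(-1)^{|s|}$ (your $\iota_s = (-1)^{|s|}\mathrm{id}\otimes\varepsilon_s$), which is the one piece you leave as "to be determined," but you correctly identify both that it is needed and why.
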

\begin{proof}
Since the first isomorphism is obvious, we prove the second.
We identify the set $c(D^{\mathsf{mir}})$ of crossings in $D^{\mathsf{mir}}$ with $c(D)$.
Hence, for each state $s\subset c(D)$, there is a canonical identification $D_s^\chi=(D^{\mathsf{mir}}_{\overbar s})^\chi$ with $\overbar s\coloneqq c(D)\setminus s$.
We set
\begin{equation}
\label{eq:prf:bracket-dual:iota}
\iota_s\coloneqq (-1)^{|s|}\mathrm{id}\otimes\varepsilon_s: D_s^\chi\otimes E_s\to (D^{\mathsf{mir}}_{\overbar s})^\chi\otimes E_{\overbar s}
\end{equation}
and write $\iota^i:\brcOf{D^\chi}^{-i}\to\brcOf{(D^{\mathsf{mir}})^\chi}^{n+i}$ the induced morphism.
We assert that the family $\iota=\{\iota^i\}_i$ defines a morphism of chain complexes $\rho_2(\brcOf{D^\chi})\to \brcOf{(D^{\mathsf{mir}})^\chi}[-n]$.
Indeed, for each $s\subset c(D)$ and $c\in c(D)\setminus s$, since we have $\rho_2((c\wedge))=\cra{c}:E_{s\amalg c}\to E_s$, the square \eqref{eq:eps-dual} yields a commutative square
\begin{equation}
\label{eq:prf:bracket-dual:diffcomp}
\begin{tikzcd}[column sep=6em]
D_{s\amalg\{c\}}^\chi\otimes E_{s\amalg\{c\}} \ar[r,"(-1)^{|s|}\rho_2(S_{s;c}^D\otimes(\wedge c))"] \ar[d,"\iota_{s\amalg\{c\}}"] & D_s^\chi\otimes E_s \ar[d,"\iota_s"] \\
(D_{\overbar s\setminus\{c\}})^\chi\otimes E_{\overbar s\setminus\{c\}} \ar[r,"(-1)^nS_{\overbar s\setminus\{c\};c}^{D^{\mathsf{mir}}}\otimes(\wedge c)"] & (D_{\overbar s}^{\mathsf{mir}})^\chi\otimes E_{\overbar s}
\end{tikzcd}
\quad,
\end{equation}
here $S^D_\ast$ and $S^{D^{\mathsf{mir}}}_\ast$ are the saddle cobordisms \eqref{eq:saddle-cob} which appear in the differentials.
This implies that $\iota$ is a morphism of chain complexes.
Since it is obviously an isomorphism, this completes the proof.
\end{proof}

\begin{corollary}
\label{cor:UKH-dual}
Let $D^{\mathsf{mir}}$ be the mirror image of a tangle diagram $D$.
Then, there is an isomorphism
\[
\rho_2(\KhOf{D})\cong\KhOf{D^{\mathsf{mir}}}
\quad.
\]
\end{corollary}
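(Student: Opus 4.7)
The plan is to unfold the definition $\KhOf{D} = \brcOf{D^{\chi_w}}[-n_-(D)]$ and reduce everything to \cref{prop:bracket-dual}, which already supplies the duality $\rho_2(\brcOf{D^\chi}) \cong \brcOf{(D^{\mathsf{mir}})^\chi}[-n]$ at the level of bracket complexes. The rest amounts to bookkeeping: tracking how $\rho_2$ interacts with the shift operator $[k]$ of \cref{NotationShift}, and using the basic combinatorial identity relating the numbers of positive and negative crossings of $D$ and $D^{\mathsf{mir}}$.

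First I would verify the shift-compatibility $\rho_2(W[k]) \cong \rho_2(W)[-k]$ for any bounded complex $W$. Directly from \eqref{eq:chain-rho2} and \cref{NotationShift},
\[
\rho_2(W[k])^i = \rho_2\bigl(W[k]^{-i}\bigr) = \rho_2\bigl(W^{-i-k}\bigr),
\qquad
(\rho_2(W)[-k])^i = \rho_2(W)^{i+k} = \rho_2\bigl(W^{-i-k}\bigr),
\]
so the two complexes agree term by term; one checks directly that the differentials match as well, since the sign $(-1)^k$ in the shifted differential is compatible with the shift of indices passing through $\rho_2$.

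Next I would note that mirroring exchanges the two crossing signs, so $n_-(D^{\mathsf{mir}}) = n_+(D)$ and hence $n_-(D) + n_-(D^{\mathsf{mir}}) = n$, the total number of crossings. I would also observe that mirroring does not alter the regions of the plane complement, so the checkerboard coloring $\chi_w$ (the one with negatively unbounded white region) is canonically the same for $D$ and $D^{\mathsf{mir}}$, and in particular $\partial^{\pm}D^{\chi_w} = \partial^{\pm}(D^{\mathsf{mir}})^{\chi_w}$, so both complexes live in the same category.

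Assembling these pieces gives the chain of isomorphisms
\begin{align*}
\rho_2(\KhOf{D})
&= \rho_2\bigl(\brcOf{D^{\chi_w}}[-n_-(D)]\bigr)
\cong \rho_2\bigl(\brcOf{D^{\chi_w}}\bigr)[n_-(D)] \\
&\cong \brcOf{(D^{\mathsf{mir}})^{\chi_w}}[-n]\,[n_-(D)]
= \brcOf{(D^{\mathsf{mir}})^{\chi_w}}[-n_+(D)] \\
&= \brcOf{(D^{\mathsf{mir}})^{\chi_w}}[-n_-(D^{\mathsf{mir}})]
= \KhOf{D^{\mathsf{mir}}},
\end{align*}
which is the desired statement. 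I do not expect any real obstacle here: the duality at the bracket level is already done, and the only delicate point is the sign bookkeeping in the shift formula $\rho_2(W[k]) \cong \rho_2(W)[-k]$, which is entirely formal. The computation above is essentially the whole proof.
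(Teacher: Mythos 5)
Your argument is correct and is, in substance, the proof the paper has in mind: the paper states \cref{cor:UKH-dual} without giving a proof, treating it as an immediate consequence of \cref{prop:bracket-dual} plus the degree-shift bookkeeping that you spell out. The verification that $\rho_2(W[k])\cong\rho_2(W)[-k]$ is indeed the only delicate point, and your sign check is right: writing $d^i_{W[k]}=(-1)^k d^{i-k}_W$ as in \cref{NotationShift} and $d^i_{\rho_2(W)}=\rho_2(d^{-i-1}_W)$ as in \eqref{eq:chain-rho2}, both sides yield $(-1)^k\rho_2(d^{-i-k-1}_W)$. Your observations that mirroring fixes the planar regions (so $\chi_w$ is preserved and the boundary $0$-manifolds agree) and that $n_-(D^{\mathsf{mir}})=n_+(D)=n-n_-(D)$ are exactly what is needed to land on $\KhOf{D^{\mathsf{mir}}}$.
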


\section{Genus-one morphism}\label{sec:genus-1}

We now define a morphism of chain complexes   
\begin{equation}
\label{eq:map-PhiHat}
\widehat\Phi:
 \brcOf*{\diagCrossNegUp^{\chi}}
\to \brcOf*{\diagCrossPosUp^{\chi}}[1]
\quad.
\end{equation}

\begin{lemma}[{\cite[Proposition~3.1.3]{Verdier1996}}]
\label{lem:cone=cofib}
Suppose we have a sequence
\[
X\xrightarrow{f} Y\xrightarrow{g} Z
\]
of chain morphisms in an additive  category $\mathcal A$.
If there is a chain homotopy $H:gf\Rightarrow 0$, that is, $dH + Hd = -gf$, then the morphism $g$ factors through a morphism $\widehat g:\operatorname{Cone}(f)\to Z$ given by
\begin{equation}
\label{eq:cone-nullinduce}
\widehat g^i=
\begin{pmatrix}
g & -H
\end{pmatrix}
:\operatorname{Cone}(f)^i=Y^i\oplus X^{i+1}\to Z^i
\quad
\end{equation}
following the canonical morphism  $Y\to\operatorname{Cone}(f)$.
\end{lemma}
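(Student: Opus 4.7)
The plan is to directly verify that the proposed matrix formula gives a chain map and realizes the required factorization; the content is pure bookkeeping in an additive category, so no construction beyond writing down $\widehat g$ is needed.

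First, I would unpack the data. The chain homotopy consists of morphisms $H^i\colon X^i\to Z^{i-1}$ satisfying $d_Z^{i-1}H^i+H^{i+1}d_X^i=-g^if^i$ for every $i$, so that the candidate
$$\widehat g^i=\begin{pmatrix} g^i & -H^{i+1}\end{pmatrix}\colon Y^i\oplus X^{i+1}\to Z^i$$
is well-defined in each degree.

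Second, I would check that $\widehat g$ commutes with the differentials. This reduces to the matrix identity
$$\begin{pmatrix} g & -H\end{pmatrix}\begin{pmatrix} d_Y & f\\ 0 & -d_X\end{pmatrix}=d_Z\begin{pmatrix} g & -H\end{pmatrix}.$$
Expanding the left-hand side produces the row $(gd_Y,\;gf+Hd_X)$, while the right-hand side is $(d_Zg,\;-d_ZH)$. The first entries agree because $g$ is a chain map, and the second entries agree precisely by the homotopy identity $d_ZH+Hd_X=-gf$. This is the one place where the hypothesis on $H$ is used, and it is what forces the sign $-H$ in the definition of $\widehat g$.

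Third, I would verify the factorization. The canonical map $\iota\colon Y\to\operatorname{Cone}(f)$ is the inclusion of the first summand, so
$$\widehat g^i\circ\iota^i=\begin{pmatrix} g^i & -H^{i+1}\end{pmatrix}\begin{pmatrix} \mathrm{id}\\ 0\end{pmatrix}=g^i,$$
which is the required identity $\widehat g\circ\iota=g$. The only real danger in the argument is mismanagement of signs and index shifts between $X^{i+1}$ and $Z^i$; beyond that, the proof is a formal matrix computation valid in any additive category, with no categorical subtleties specific to $\Cob$ intervening.
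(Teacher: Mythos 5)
Your proof is correct, and since the paper merely cites Verdier for this lemma without reproducing the argument, there is nothing to compare against — the direct matrix verification you give is the standard (and essentially the only) proof. Both checks (chain-map identity via the homotopy relation in the second matrix entry, and factorization through the inclusion $Y\to\operatorname{Cone}(f)$) are carried out correctly with the right signs and index shifts.
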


We define the morphism $\Phi$ on the universal bracket complex $\brcOf{\blank}$  induced by the following cobordism:
\begin{equation}\label{eq:Phi}
\BordPhiFst - \BordPhiSnd
\;:\;
\diagSmoothV\to \diagSmoothV
\quad.
\end{equation}
We also have the following single saddle operations:
\[
\BordDeltaN
\;:\;
\diagSmoothH\to \diagSmoothV
\quad,\qquad
\BordDeltaP
\;:\;
\diagSmoothV
\to \diagSmoothH
\quad.
\]
We denote by $\delta_-$ and $\delta_+$ respectively the morphism induced on complexes $\brcOf{\blank}$.
We obtain the sequence of morphisms of chain complexes below:
\begin{equation}
\label{eq:delta-Phi-delta}
\brcOf*{
{\diagSmoothH}^{\chi} }
\xrightarrow{-\delta_-}
\brcOf*{ {\diagSmoothUp}^{\chi} }
\xrightarrow{\Phi}
\brcOf*{{\diagSmoothUp}^{\chi} }
\xrightarrow{-\delta_+ } \brcOf*{{\diagSmoothH}^{\chi} }\quad.
\end{equation}

\begin{proposition}
\label{prop:delta-Phi}
In the situation above, the compositions $\Phi\delta_-$ and $\delta_+\Phi$ are zero. 
Consequently, the sequence \eqref{eq:delta-Phi-delta} induces a morphism of chain complexes
\[
\widehat\Phi = \BordPhiFst \otimes \cra{c} - \BordPhiSnd \otimes \cra{c}:\;
\brcOf*{{\diagCrossNegUpWith{c}}^{\chi}}
\to
\brcOf*{{\diagCrossPosUpWith{c}}^{\chi}}[1].
\]
\end{proposition}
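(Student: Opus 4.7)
The plan has two steps: first to verify the identities $\Phi\delta_- = 0$ and $\delta_+\Phi = 0$ by a direct pictorial computation in $\Cob$, and second to assemble $\widehat\Phi$ by combining \cref{lem:cone=cofib} with the mapping-cone presentation of the universal bracket complex supplied by \cref{prop:coneVSbracket}.

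For the vanishing of $\Phi\delta_-$, I would compute the two compositions $\BordPhiFst\circ\delta_-$ and $\BordPhiSnd\circ\delta_-$ separately and check that they coincide as diffeomorphism classes of $2$-bordisms rel boundary. Heuristically, the local region in which the two summands of $\Phi$ differ is absorbed into the merge region supplied by the saddle, so the two $2$-bordisms become isotopic after gluing; if direct isotopy is insufficient, the identification should close using the \ref{relK:S}, \ref{relK:T}, or \ref{relK:4Tu} relation. The vanishing of $\delta_+\Phi$ is handled by the mirror argument, gluing the saddle on top rather than on the bottom. This topological identification is the main obstacle: it depends on the precise geometry of $\BordPhiFst$ and $\BordPhiSnd$ near the points where the saddle is attached, and careful attention to orientations (via the checkerboard coloring) is needed.

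With the two vanishings in hand, applying \cref{lem:cone=cofib} with zero null-homotopy to $\Phi\circ(-\delta_-) = 0$ produces a chain map $\operatorname{Cone}(-\delta_-)\to \brcOf*{\diagSmoothV^\chi}$ whose only nonzero block is $\Phi$. The dual formulation of the same lemma (equivalently, an explicit matrix check) then uses $(-\delta_+)\circ\Phi = 0$ to lift this composite to a map into $\operatorname{Cone}(-\delta_+)[1]$. Invoking the isomorphisms $\brcOf*{{\diagCrossNegUpWith{c}}^\chi}\cong \operatorname{Cone}(-\delta_-)[1]$ and $\brcOf*{{\diagCrossPosUpWith{c}}^\chi}\cong \operatorname{Cone}(-\delta_+)[1]$ from \cref{prop:coneVSbracket} and shifting appropriately, one obtains the desired chain map landing in $\brcOf*{{\diagCrossPosUpWith{c}}^\chi}[1]$.

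To match the explicit formula $\widehat\Phi = \Phi\otimes \cra{c}$, note that the cone-to-bracket isomorphism of \cref{prop:coneVSbracket} uses the sign map $(\wedge c)$ on the state module; pulling back through this identification and through the lemma's factorization introduces the dual sign $\cra{c}\colon E_s\to E_{s\setminus\{c\}}$ on the vertical-to-vertical component and zero elsewhere. The remaining check that $\widehat\Phi$ commutes with the differentials then splits, under the cone decomposition, into exactly three facts: the chain-map property of $\Phi$ on $\brcOf*{\diagSmoothV^\chi}$, the vanishing $\Phi\delta_- = 0$, and the vanishing $\delta_+\Phi = 0$, all of which are now available.
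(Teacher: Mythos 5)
Your proposal takes essentially the same route as the paper: verify the two vanishings by a cobordism computation, then assemble $\widehat\Phi$ via \cref{lem:cone=cofib} and the cone identifications of \cref{prop:coneVSbracket}. One small clarification: the paper asserts the key identities $\BordDeltaN\circ\BordPhiFst=\BordDeltaN\circ\BordPhiSnd$ and $\BordPhiFst\circ\BordDeltaP=\BordPhiSnd\circ\BordDeltaP$ already hold at the level of diffeomorphism classes of $2$-bordisms, so no appeal to \ref{relK:S}, \ref{relK:T}, or \ref{relK:4Tu} is needed there, whereas you leave that open; otherwise the argument is as in the paper.
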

\begin{proof}
The first statement follows from the equations: 
\[
\BordPhiFst \BordDeltaP  
=
\BordPhiSnd \BordDeltaP 
\quad, \qquad
\BordDeltaN \BordPhiFst   
=
\BordDeltaN \BordPhiSnd  
\quad. 
\]

We show the latter.
By Proposition~\ref{prop:coneVSbracket}, we have identifications
\[
\operatorname{Cone}(-\delta_-) 
\cong  \brcOf*{ \diagCrossNegUp^{\chi} }[-1]
\ ,\quad
\operatorname{Cone}(-\delta_+) 
\cong  \brcOf*{ \diagCrossPosUp^{\chi} }[-1]
\ .
\]
Hence, in view of Lemma~\ref{lem:cone=cofib}, \cref{prop:delta-Phi} yields a morphism of chain complexes $\widehat\Phi$ as required.
\end{proof}

In what follows, the morphism $\widehat\Phi$ is referred to as the \emph{genus-one morphism}.
The morphism $\widehat\Phi$ induces a morphism
\[
\KhOf*{\diagCrossNegUp}
\to \KhOf*{\diagCrossPosUp}
\quad.
\]
Moreover, it is of degree $0$ with respect to Euler graded TQFT \cite{BarNatan2005,LaudaPfeiffer2009}.

\begin{remark}
\label{rem:Phi-prime}
In the definition of the morphism $\Phi$, for the position of the $1$-handle attaching in the second term, if we switch ``left'' to ``right'' and define $\Phi'$, we have $\Phi'=-\Phi$ thanks to the relation \ref{relK:4Tu} in \cref{sec:univ-Kh:K}.
\end{remark}

\begin{proposition}\label{prop:Mir}
Let $D_-$ and $D_+$ be the same tangle diagram except for a crossing $c$ whose sign is negative and positive, respectively.
Let ${D}^{\mathsf{mir}}_-$ and ${D}^{\mathsf{mir}}_+$ be the mirror images of ${D}_-$ and ${D}_+$, respectively.
Let $\chi$ be a checkerboard coloring.
The crossing in $D^{\mathsf{mir}}_{\pm}$ corresponding to $c$ is denoted by $c^{\mathsf{mir}}$.   Let $\widehat{\Phi}_c :$ $\brcOf*{D_-^{\chi}}$ $\to$ $\brcOf*{D_+^{\chi}}$ be the genus-one morphism that is applied to $c$.
Then the following diagram commutes:
\begin{equation}
\label{diag:Mirror}
\begin{tikzcd}
\rho_2 \left( \brcOf*{D_+^{\chi}} \right) \ar[r,"\rho_2 (\widehat{\Phi}_c)"] \ar[d,"\cong"'] & \rho_2 \left( \brcOf*{D_-^{\chi}} \right) \ar[d,"\cong"]
\\
\brcOf*{{D_+^{\mathsf{mir}}}^{\chi}}[-n] \ar[r,"\widehat{\Phi}_{c^{\mathsf{mir}}}"'] & \brcOf*{{D_-^{\mathsf{mir}}}^{\chi}}[-n] 
\end{tikzcd}
\quad,
\end{equation}
here the vertical isomorphisms are the ones in \cref{prop:bracket-dual}.
\end{proposition}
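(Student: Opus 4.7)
The plan is to verify the commutativity of \eqref{diag:Mirror} by computing both compositions explicitly on each summand of the direct-sum decomposition of the complexes, in the same spirit as the proof of \cref{prop:bracket-dual}. Write $\widehat\Phi_c = \Psi^+_c - \Psi^-_c$ where $\Psi^\pm_c \coloneqq \BordPhiFst \otimes \cra{c}$ and $\BordPhiSnd \otimes \cra{c}$ respectively, per \cref{prop:delta-Phi}. Since both sides of the square are determined by their action on summands of the form $(D_\pm)_s^\chi \otimes E_s$, it suffices to check the identity on a typical state $s$ with $c\in s$ (the map being zero otherwise).

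First I would apply $\rho_2$ to the formula for $\widehat\Phi_c$. The functor $\rho_2$ reverses the direction of morphisms and the orientation of each 2-bordism; in particular it sends the saddle-with-handle bordism $\BordPhiFst$ from $\diagSmoothV \to \diagSmoothV$ to the same underlying surface, but now viewed in the opposite time direction and with reversed orientation. Next, because $D_\pm^{\mathsf{mir}}$ is obtained from $D_\pm$ by a reflection, the $1$-smoothing of $c$ in $D_\pm$ is identified with the $0$-smoothing of $c^{\mathsf{mir}}$ in $D_\pm^{\mathsf{mir}}$. Under this identification, the orientation-reversed genus-one cobordism going $\diagSmoothV \to \diagSmoothV$ is canonically isotopic to the genus-one cobordism defining $\widehat\Phi_{c^{\mathsf{mir}}}$, acting between the corresponding $0$-smoothing summands in $\brcOf{(D_\pm^{\mathsf{mir}})^\chi}$. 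This establishes the geometric match on cobordism factors; the possible swap of the two summands $\BordPhiFst \leftrightarrow \BordPhiSnd$ under this reflection is either trivial or absorbed by a global sign coming from \ref{relK:4Tu} as in \cref{rem:Phi-prime}.

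Next I would track the signs. The vertical isomorphisms are $\iota_s = (-1)^{|s|}\mathrm{id}\otimes \varepsilon_s$ from \eqref{eq:prf:bracket-dual:iota}. The sign contributions to compare are: (i) the $(-1)^{|s|}$ factor in $\iota_s$ before and the $(-1)^{|s|-1}$ factor in $\iota_{s\setminus\{c\}}$ after applying $\widehat\Phi_{c^{\mathsf{mir}}}$; (ii) the interaction of $\rho_2(\cra{c})$ with $\varepsilon_s$ via the commuting square \eqref{eq:eps-dual}, which produces the factor $(-1)^{n-1}(\wedge c)$; (iii) the degree shift $[-n]$ and the sign convention in \cref{NotationShift}. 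Combining these, the required sign identity is exactly the one verified in diagram \eqref{eq:prf:bracket-dual:diffcomp} in the proof of \cref{prop:bracket-dual}, with the saddle cobordism $S_{s;c}$ there replaced by the genus-one cobordism $\BordPhiFst - \BordPhiSnd$; since the sign calculation only depended on the $\otimes \cra{c}$ factor, it transfers verbatim.

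The main obstacle I anticipate is the geometric step of matching $\rho_2$ applied to the genus-one surface with the genus-one surface attached to the mirror crossing, together with pinning down whether the two summands $\BordPhiFst$ and $\BordPhiSnd$ are preserved or exchanged under this identification. If exchanged, one picks up an overall minus sign via \ref{relK:4Tu}, and this must be reconciled with the sign bookkeeping above; otherwise the match is direct. In either case, the argument is reduced to a verification of orientations on the genus-one cobordism that can be made by tracing through the checkerboard coloring $\chi$ in a local neighborhood of $c$, which is routine but notationally delicate.
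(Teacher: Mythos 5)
The paper offers no proof for this proposition; it simply states ``Proposition~\ref{prop:Mir} is verified by the direct computation, so we omit the proof.'' Your proposal is a sketch of exactly that direct computation, and its overall structure is sound: decompose $\widehat\Phi_c$ into its two cobordism terms with the $\cra{c}$ factor, trace $\rho_2$ through them, and compare against $\widehat\Phi_{c^{\mathsf{mir}}}$ through the isomorphisms $\iota_s$ of \cref{prop:bracket-dual}, checking the cobordism match and the sign match separately. That is the right plan and matches the intended (omitted) argument.

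Two places where your write-up is looser than it should be, and which you would need to nail down before this counts as a complete proof. First, the claim that the sign bookkeeping ``transfers verbatim'' from \eqref{eq:prf:bracket-dual:diffcomp} is not quite right: there the relevant edges carry $\rho_2(S_{s;c}\otimes(\wedge c))$ versus $S\otimes(\wedge c)$, so the commuting square \eqref{eq:eps-dual} is applied with $\cra{c}$ on top and $(\wedge c)$ on the bottom. In the present situation $\widehat\Phi_c$ and $\widehat\Phi_{c^{\mathsf{mir}}}$ both carry a $\cra{\cdot}$ factor, so what you actually need is the relation between $\rho_2(\cra{c})$ and $\cra{c^{\mathsf{mir}}}$ through $\varepsilon_s$, which is a variant of \eqref{eq:eps-dual} (obtainable from it, since $\rho_2$ on the scalar modules $E_A$ merely transposes) rather than the identical square; the sign exponent changes because $\cra{c}$ lowers and $(\wedge c)$ raises $|s|$. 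Second, you leave open whether the reflection exchanges $\BordPhiFst$ and $\BordPhiSnd$. Since $D_s^\chi=(D^{\mathsf{mir}}_{\overbar s})^\chi$ literally as neat $1$-submanifolds of $\mathbb R\times[0,1]$ (the mirror is across the plane of the diagram, so only over/under data changes), the two tubes of $\diagSmoothV$ are \emph{not} swapped, hence $\rho_2$ carries $\BordPhiFst$ to $\BordPhiFst$ and $\BordPhiSnd$ to $\BordPhiSnd$; no extra sign from \ref{relK:4Tu} via \cref{rem:Phi-prime} enters. With those two points pinned down, your sketch becomes the direct computation the paper alludes to.
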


\Cref{prop:Mir} is verified by the direct computation, so we omit the proof.

\begin{remark}
We note that, though there are other choices on morphisms of chain complexes of the form \eqref{eq:map-PhiHat}, some popular ones fail to have of bidegree $(0,0)$ in the case of Khovanov homology.
For example, the following link cobordisms realize another crossing change:
\begin{equation}
\label{eq:intro:cob-crossingchange}
\diagCrossNegUp{}=
\begin{tikzpicture}[baseline=-.5ex]
\node[circle,inner sep=2] (L) at (-.2,0) {};
\node[circle,inner sep=2] (R) at (.2,0) {};
\draw[red,very thick,-stealth] (-.6,-.8) -- (L) (L) to[out=56,in=180] (R.north) to[out=0,in=236] (.6,.8);
\draw[red,very thick,-stealth] (.6,-.8) to[out=124,in=0] (R.south) to[out=180,in=-56] (L.center) -- (-.6,.8);
\end{tikzpicture}
\xrightarrow{\text{saddle}}
\begin{tikzpicture}[baseline=-.5ex]
\node[circle,inner sep=2] (L) at (-.2,0) {};
\node[circle,inner sep=1] (R) at (.2,0) {};
\draw[red,very thick,-stealth] (-.6,-.8) -- (L) (L) to[out=56,in=90,looseness=3] (R.west) to[out=-90,in=-56,looseness=3] (L.center) -- (-.6,.8);
\draw[red,very thick,-stealth] (.6,-.8) to[out=124,in=-90] (R.east) to[out=90,in=236] (.6,.8);
\end{tikzpicture}
\xrightarrow{\RMove1^2}
\begin{tikzpicture}[baseline=-.5ex]
\node[circle,inner sep=2] (L) at (-.2,0) {};
\node[circle,inner sep=1] (R) at (.2,0) {};
\draw[red,very thick,-stealth] (-.6,-.8) -- (L.center) to[out=56,in=90,looseness=3] (R.west) to[out=-90,in=-56,looseness=3] (L) (L) -- (-.6,.8);
\draw[red,very thick,-stealth] (.6,-.8) to[out=124,in=-90] (R.east) to[out=90,in=236] (.6,.8);
\end{tikzpicture}
\xrightarrow{\text{saddle}}
\begin{tikzpicture}[baseline=-.5ex]
\node[circle,inner sep=2] (L) at (-.2,0) {};
\node[circle,inner sep=2] (R) at (.2,0) {};
\draw[red,very thick,-stealth] (-.6,-.8) -- (L.center) to[out=56,in=180] (R.north) to[out=0,in=236] (.6,.8);
\draw[red,very thick,-stealth] (.6,-.8) to[out=124,in=0] (R.south) to[out=180,in=-56] (L) (L) -- (-.6,.8);
\end{tikzpicture}
=\diagCrossPosUp
\quad.
\end{equation}
It turns out that the induced morphism on Khovanov complexes coincides with the following composition:
\[
\brcOf*{\diagCrossPos^{\chi}}
\xrightarrow{\beta} \brcOf*{\diagSmoothV^{\chi}}
\xrightarrow\alpha  \brcOf*{\diagCrossNeg^{\chi}}[-1]
\quad.
\]
M.~Hedden and L.~Watson \cite[Section~3.1]{HeddenWatson2018} used this morphism to derive a categorified version of the Jones skein relation.
\end{remark}

\section{Invariance}
\label{sec:invariance}

In this section, we see that the genus-one morphism $\widehat\Phi$ defined in \cref{prop:delta-Phi} is invariant under moves involved with singular links.
Namely, according to \cite{BatainehElhamdadiHajijYoumans2018}, two singular link diagrams represent isotopic singular links if and only if they are connected by the following moves in addition to Reidemeister moves:
\begin{equation}
\label{eq:moveSing}
\diagCrossSingRivOL \leftrightarrow \diagCrossSingRivOR
\ ,\quad
\diagCrossSingRivUL \leftrightarrow \diagCrossSingRivUR
\ ,\quad
\diagRvSingU \leftrightarrow \diagRvSingD
\quad.
\end{equation}
Motivated by this fact, we aim at proving the invariance of $\widehat\Phi$ under these moves in the sense of the following propositions, where the checkerboard colorings are omitted from the notation for simplicity.

\begin{theorem}
\label{theo:genus1-inv}
There are chain-homotopy commutative squares
\begin{gather}
\label{eq:genus1-inv:O4}
\begin{tikzcd}[column sep=1.5em,row sep=3ex,ampersand replacement=\&]
\brcOf*{\tikz[baseline=-.5ex]{\nodeTikzBox{diagCrossNegRivOL}; \node at (.5,0) {$c_-$};}} \ar[r,"\widehat\Phi_c"] \ar[d,"\simeq"'] \& \brcOf*{\tikz[baseline=-.5ex]{\nodeTikzBox{diagCrossPosRivOL}; \node at (.5,0){$c_+$};}}[1] \ar[d,"\simeq"] \ar[dl,Rightarrow] \\
\brcOf*{\tikz[baseline=-.5ex]{\nodeTikzBox{diagCrossNegRivOR}; \node at (-.5,0) {$c'_-$};}} \ar[r,"\widehat\Phi_{c'}"] \& \brcOf*{\tikz[baseline=-.5ex]{\nodeTikzBox{diagCrossPosRivOR}; \node at (-.5,0) {$c'_+$};}}[1]
\end{tikzcd}
,\,
\begin{tikzcd}[column sep=1.5em,row sep=3ex,ampersand replacement=\&]
\brcOf*{\tikz[baseline=-.5ex]{\nodeTikzBox{diagCrossNegRivUL}; \node at (.5,0) {$c_-$};}} \ar[r,"\widehat\Phi_c"] \ar[d,"\simeq"'] \& \brcOf*{\tikz[baseline=-.5ex]{\nodeTikzBox{diagCrossPosRivUL}; \node at (.5,0){$c_+$};}}[1] \ar[d,"\simeq"] \ar[dl,Rightarrow] \\
\brcOf*{\tikz[baseline=-.5ex]{\nodeTikzBox{diagCrossNegRivUR}; \node at (-.5,0) {$c'_-$};}} \ar[r,"\widehat\Phi_{c'}"] \& \brcOf*{\tikz[baseline=-.5ex]{\nodeTikzBox{diagCrossPosRivUR}; \node at (-.5,0) {$c'_+$};}}[1]
\end{tikzcd}
,\displaybreak[2]\\[1ex]
\label{eq:genus1-inv:O5}
\begin{tikzcd}[column sep=1.5em,row sep=3ex,ampersand replacement=\&]
\brcOf*{\diagRiiLeftUpWithSign} \ar[r,"\widehat\Phi_a"] \ar[d,"\simeq"'] \& \brcOf*{\diagRvFTwWithSign}[1] \ar[d,equal] \ar[dl,Rightarrow] \\
\brcOf*{\diagRiiRightUpWithSign} \ar[r,"\widehat\Phi_b"] \& \brcOf*{\diagRvFTwWithSign}[1]
\end{tikzcd}
\quad,
\end{gather}
with vertical edges being chain homotopy equivalences.
\end{theorem}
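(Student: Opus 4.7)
The plan is to treat both squares uniformly: localize via Propositions \ref{prop:TangleAst} and \ref{prop:TangleOtimes}, unfold each horizontal arrow through its mapping-cone presentation, and then construct the required chain homotopy at the level of $2$-bordisms. The localization step means both sides of each square agree away from a fixed neighborhood containing the distinguished crossing ($c$, $c'$, $a$, or $b$) together with the strands involved in the singular move, so the whole verification can be done on the local tangle.

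At the crossing that is being changed, Proposition \ref{prop:coneVSbracket} combined with Lemma \ref{lem:cone=cofib} presents each $\widehat\Phi$ as the cofibre map associated with the genus-one cobordism $\Phi = \BordPhiFst - \BordPhiSnd$ acting between the $1$-smoothings. Under this identification, the vertical chain homotopy equivalences of each square become the Bar-Natan chain homotopy equivalences for the ordinary Reidemeister moves on the remaining strands --- Reidemeister III for \eqref{eq:genus1-inv:O4} and Reidemeister II for \eqref{eq:genus1-inv:O5} --- which are part of the proof of Theorem \ref{thm:Bar-Natan}. The theorem then reduces to showing that these equivalences intertwine the two instances of $\Phi$ up to an explicit chain homotopy, modulo the relations \ref{relK:S}, \ref{relK:T}, \ref{relK:4Tu}.

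The main obstacle is the bookkeeping in this last step: each mapping cone splits into summands indexed by states on the auxiliary crossings, so the homotopy $\eta$ must be specified as a matrix of $2$-bordisms, each of them carrying the distinguished genus-one feature of $\Phi$. The key technical input is the \ref{relK:4Tu} relation, which allows the one-handle of $\Phi$ to be slid across the three strands of an R3 move (or the two strands of an R2 move), producing exactly the cancellations needed to close the chain-homotopy equation $d\eta + \eta d = (\text{RHS equiv}) \circ \widehat\Phi - \widehat\Phi \circ (\text{LHS equiv})$. For \eqref{eq:genus1-inv:O5} the verification is lighter because the right-hand vertical arrow is the identity, and $\eta$ can be built directly from the R2 zigzag homotopy together with a torus absorption via \ref{relK:T}. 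The genuine work is in \eqref{eq:genus1-inv:O4}, where the R3 equivalence comprises several homotopy components and matching them against $\Phi$ requires careful signed accounting; I expect this to close by combining \ref{relK:4Tu}-handle-slides with the sign data coming from the modules $E_A$ of \cref{sec:univ-Kh:univBracket}, the latter ensuring that the cancellations occur with the correct Koszul signs.
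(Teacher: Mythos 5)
Your high-level plan shares the skeleton of the paper's proof --- localize via \cref{prop:TangleAst}--\cref{prop:TangleOtimes}, present $\widehat\Phi$ as a cone-induced morphism, and push the verification onto the relations \ref{relK:4Tu} --- but you leave out the piece of machinery that actually makes the argument close, and one of your specific claims for \eqref{eq:genus1-inv:O5} is wrong.

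The crucial gap is in how you propose to build the homotopy $\eta$. You observe that $\widehat\Phi$ is obtained from a three-term sequence through \cref{prop:coneVSbracket} and \cref{lem:cone=cofib}, and that both source and target of $\widehat\Phi$ are cones; but you then say you will ``specify $\eta$ as a matrix of $2$-bordisms'' and make things cancel by 4Tu. This cannot work as stated: to induce a chain homotopy between cone-induced maps, it is not enough to have chain homotopies filling each individual square of the underlying sequences. One also needs higher coherence data --- in the paper's notation, the secondary homotopies $\Psi_-$, $\Psi_+$ (and, in general, a tertiary $\Gamma$) that witness the compatibility of the squarewise homotopies $F$, $G$, $H$ across the cone decomposition. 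This is exactly what \cref{prop:cone-hfunc} and \cref{cor:cone-cocone-hfunc} package, and \cref{lem:Psis} is where the real 4Tu computation lives: it verifies the coherence equations $d\Psi_\pm \mp \Psi_\pm d = \cdots$. Without identifying that you need such $\Psi_\pm$ and proving they exist, the homotopy equation $d\eta + \eta d = v\widehat\Phi - \widehat\Phi' u$ has no reason to close, and ``matching against $\Phi$ requires careful signed accounting'' is not a proof that it does. You are also implicitly assuming that the vertical maps in the theorem are literally Bar-Natan's R3 equivalences; in the paper they are the maps $F_\ast$ built via \cref{cor:cone-heq} from the explicit $\gamma$ (a planar-isotopy isomorphism on the $0$-smoothing) and $\omega$ (a composite of R2 equivalences on the $1$-smoothing), which is what lets the coherence diagram \eqref{eq:O4diagram} be written down in the first place.

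For \eqref{eq:genus1-inv:O5} your proposed mechanism also does not match what actually happens, and introduces an unnecessary ingredient. You invoke ``a torus absorption via \ref{relK:T}'' alongside the R2 zigzag, but no torus ever appears: the paper's \cref{lem:O5-expanded} shows that the square already commutes \emph{strictly} after one precomposes with the $\RMoveBar2$ and $\RMoveBar2^\circ$ maps, and the homotopy you want is then obtained mechanically by inserting the standard homotopy $\mathrm{id}\Rightarrow\RMove2^\circ\RMoveBar2^\circ$. Relation \ref{relK:T} is simply not used. Your use of it suggests you were reasoning by analogy with handle cancellations rather than from the actual cobordism identities governing $\Phi$ and the R2 maps, which is another sign that the explicit verification you defer (``I expect this to close'') is the part that is genuinely at risk.
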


\begin{remark}\label{rem:invO4e}
We note that the second move in \eqref{eq:moveSing} is the mirror image of the first.
By virtue of the duality of Khovanov homology (cf.~\cref{prop:bracket-dual} and \cref{prop:Mir}), this implies that invariance under one move follows from that under the other.
In the proof of \cref{theo:genus1-inv}, we concentrate on the first move in particular.
\end{remark}

\subsection{Homotopy coherence of mapping cones}
\label{sec:invariance:cone}

As the genus-one morphism $\widehat\Phi$ is obtained from the sequence \eqref{eq:delta-Phi-delta}, the invariance stated in \cref{theo:genus1-inv} will be inherited from that of \eqref{eq:delta-Phi-delta}.
We then begin with a discussion on this kind of inheritance of invariance.

We first see that mapping cones have functoriality with respect not only to commutative squares but also to chain-homotopy commutative ones.
Let $\mathcal A$ be an additive category, and suppose we are given a chain-homotopy commutative diagram
\begin{equation}
\label{eq:hmtp-commsq}
\begin{tikzcd}
X' \ar[r,"f'"] \ar[d,"u"'] & Y' \ar[d,"v"] \ar[dl,Rightarrow,"F"'] \\
X \ar[r,"f"'] & Y
\end{tikzcd}
\quad;
\end{equation}
in other words, $F$ is a chain homotopy with $d_YF+Fd_{X'}=fu-vf'$.
We define a morphism
\begin{equation}
\label{eq:cone-hinduced}
F^i_\ast\coloneqq
\begin{pmatrix}
v^i & -F^i \\ 0 & u^{i+1}
\end{pmatrix}
:Y'^i\oplus X'^{i+1}\to Y^i\oplus X^{i+1}
\end{equation}
for each integer $i\in\mathbb Z$.
It turns out that the family $F_\ast=\{F_\ast^i\}$ forms a morphism of complexes $F_\ast:\operatorname{Cone}(f')\to\operatorname{Cone}(f)$ which makes the following diagram commute (strictly):
\[
\begin{tikzcd}
Y' \ar[r] \ar[d,"v"'] & \operatorname{Cone}(f') \ar[r] \ar[d,"F_\ast"] & X'[-1] \ar[d,"u"] \\
Y \ar[r] & \operatorname{Cone}(f) \ar[r] & X[-1]
\end{tikzcd}
\quad.
\]
Actually, the construction is invariant under chain homotopies in the following sense.

\begin{lemma}
Suppose we are given a chain-homotopy commutative diagram as below:
\[
\begin{tikzcd}[column sep=4em,row sep=5ex]
X' \ar[r,"f'"] \ar[d,bend left,shift left,"u",""'{name=UR}] \ar[d,bend right,shift right,"u'"',""{name=UL}] \ar[Rightarrow,anchor=center,from=UR,to=UL,"U"'] & Y' \ar[d,bend right,shift right,"v"',""{name=VL}] \ar[d,bend left,shift left,"v'",""'{name=VR}] \ar[Rightarrow,from=VR,to=VL,"V"'] \ar[dl,Rightarrow,shorten <= 1em,shorten >= 1em,"F"'] \\
X \ar[r,"f"] & Y
\end{tikzcd}
\quad.
\]
We define a chain homotopy $F':v'f'\Rightarrow fu'$ by $F'\coloneqq fU+F+Vf'$ and write $F_\ast,F'_\ast:\operatorname{Cone}(f')\to\operatorname{Cone}(f)$ the morphisms of complexes induced by $F$ and $F'$ respectively.
Then, there is a chain homotopy $\Psi:F_\ast\Rightarrow F'_\ast$ given by
\[
\Psi^i=
\begin{pmatrix}
V^i & 0 \\ 0 & U^{i+1}
\end{pmatrix}
:Y'^i\oplus X'^{i+1}\to Y^{i-1}\oplus X^i
\quad.
\]
\end{lemma}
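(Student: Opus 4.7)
The plan is to prove the claim by a direct matrix computation: verify that the prescribed $\Psi$ satisfies the chain-homotopy identity $d_{\operatorname{Cone}(f)}\Psi + \Psi d_{\operatorname{Cone}(f')} = F_\ast - F'_\ast$ (with the sign convention used in \cref{lem:cone=cofib}, under which a homotopy $H\colon g \Rightarrow 0$ obeys $dH + Hd = -gf$). The ingredients are just the three homotopy equations that are given or implicit in the setup, namely
\begin{gather*}
d_X^i U^{i+1} + U^{i+2} d_{X'}^{i+1} = \pm(u^{i+1} - u'^{i+1})\quad,\\
d_Y^{i-1} V^i + V^{i+1} d_{Y'}^i = \pm(v^i - v'^i)\quad,\\
d_Y^{i-1} F^i + F^{i+1} d_{X'}^{i+1} = f^i u^{i+1} - v^i f'^{i+1}\quad,
\end{gather*}
together with the explicit formula $F' = fU + F + Vf'$.

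First I would unpack the two sides of the identity. Using the matrix form of the mapping-cone differential from \cref{DefMappingCone} and the explicit $\Psi^i$ given in the statement, the composite $d_{\operatorname{Cone}(f)}^{i-1}\Psi^i + \Psi^{i+1} d_{\operatorname{Cone}(f')}^i$ expands to the $2\times 2$ block matrix
\[
\begin{pmatrix}
d_Y^{i-1}V^i + V^{i+1}d_{Y'}^i & f^i U^{i+1} + V^{i+1} f'^{i+1} \\
0 & -d_X^i U^{i+1} - U^{i+2} d_{X'}^{i+1}
\end{pmatrix}\quad.
\]
On the other hand, from the formula \eqref{eq:cone-hinduced} together with $F' = fU + F + Vf'$, the difference $F_\ast^i - F'^i_\ast$ equals
\[
\begin{pmatrix}
v^i - v'^i & -F^i + F'^i \\
0 & u^{i+1} - u'^{i+1}
\end{pmatrix}
=
\begin{pmatrix}
v^i - v'^i & f^i U^{i+1} + V^{i+1} f'^{i+1} \\
0 & u^{i+1} - u'^{i+1}
\end{pmatrix}\quad.
\]

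Second I would match the two matrices entry by entry. The $(1,2)$ block is already identical by construction of $F'$; this is precisely why the particular linear combination $fU + F + Vf'$ is chosen. The $(1,1)$ and $(2,2)$ blocks then reduce, respectively, to the homotopy equations for $V$ and $U$, and the upper-triangular shape is preserved because $f$ and $f'$ are honest chain maps. The only thing to double-check is an overall sign bookkeeping, and, after applying the conventions fixed in \cref{lem:cone=cofib}, the identity $d\Psi + \Psi d = F_\ast - F'_\ast$ falls out immediately.

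The step I expect to require the most care is the sign audit in the $(2,2)$ slot: the minus sign on $d_X$ in the cone differential interacts with the sign in the homotopy equation for $U$, and one has to make sure that the resulting sign agrees with the one produced by $u - u'$ (equivalently, $u' - u$ after transporting through the conventions). Once this bookkeeping is done consistently with \cref{lem:cone=cofib}, no further ideas are needed; the proof is essentially a one-line matrix identity justified by the three homotopy relations above.
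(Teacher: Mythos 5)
Your computation is correct, and it is precisely the direct verification that the paper leaves unwritten: the $(1,1)$ and $(2,2)$ blocks reduce to the homotopy identities for $V$ and $U$, and the $(1,2)$ block matches because $F'-F=fU+Vf'$, which is exactly why that combination defines $F'$. For the sign audit you flag, note that the identities forced by requiring $F'=fU+F+Vf'$ to be a homotopy $v'f'\Rightarrow fu'$ (given $d_YF+Fd_{X'}=fu-vf'$) are $d_XU+Ud_{X'}=u'-u$ and $d_YV+Vd_{Y'}=v-v'$ --- consistent with the opposite orientations of $U$ and $V$ in the diagram --- and with these your identity $d_{\operatorname{Cone}(f)}\Psi+\Psi d_{\operatorname{Cone}(f')}=F_\ast-F'_\ast$ holds on the nose.
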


\begin{corollary}
\label{cor:cone-heq}
In the chain-homotopy commutative square \eqref{eq:hmtp-commsq}, suppose in addition that $u$ and $v$ are both chain homotopy equivalences.
Then the induced morphism $F_\ast$ is also a chain homotopy equivalence.
\end{corollary}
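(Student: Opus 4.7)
The plan is to build an explicit chain homotopy inverse $G_\ast$ to $F_\ast$ and then verify that both compositions are chain-homotopic to the identities by applying the preceding lemma. Fix chain homotopy inverses $u^{-1}\colon X\to X'$ and $v^{-1}\colon Y\to Y'$ of $u$ and $v$, together with chain homotopies $P\colon u^{-1}u\sim\mathrm{id}_{X'}$, $P'\colon uu^{-1}\sim\mathrm{id}_X$, $Q\colon v^{-1}v\sim\mathrm{id}_{Y'}$, and $Q'\colon vv^{-1}\sim\mathrm{id}_Y$.

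The first step is to produce a degree $-1$ map $G\colon X\to Y'$ filling the reversed square, i.e., satisfying $d_{Y'}G + Gd_X = f'u^{-1} - v^{-1}f$. Starting from the naive candidate $-v^{-1}Fu^{-1}$ and absorbing its defect using the chain-map property of $v^{-1}f$ and $f'u^{-1}$ (which, for instance, turns $v^{-1}f\,dP'$ into the boundary $d(v^{-1}fP')$), one arrives at the explicit formula
\[
G \coloneqq -v^{-1}Fu^{-1} + v^{-1}fP' - Qf'u^{-1}.
\]
Applying \eqref{eq:cone-hinduced} then yields a chain map $G_\ast\colon\operatorname{Cone}(f)\to\operatorname{Cone}(f')$.

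Next, I would verify $F_\ast G_\ast\simeq\mathrm{id}_{\operatorname{Cone}(f)}$; the verification of $G_\ast F_\ast\simeq\mathrm{id}_{\operatorname{Cone}(f')}$ is entirely symmetric. Block matrix multiplication identifies $F_\ast G_\ast$ with the induced morphism of the composite square $(uu^{-1},vv^{-1},H)$ where $H\coloneqq Fu^{-1}+vG$. The preceding lemma, applied with $U=-P'$ and $V=-Q'$ and the formula $F'=fU+F+Vf'$, deforms this up to chain homotopy to the induced morphism of $(\mathrm{id}_X,\mathrm{id}_Y,\tilde H)$ with $\tilde H = H - fP' - Q'f$. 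A direct computation, exploiting the correction terms built into $G$, shows that $\tilde H$ is a boundary of the form $d_Y K - Kd_X$ for an explicit degree $-2$ map $K$; the elementary null-homotopy $\bigl(\begin{smallmatrix}0 & -K\\ 0 & 0\end{smallmatrix}\bigr)$ then completes the reduction to $\mathrm{id}_{\operatorname{Cone}(f)}$. The main obstacle is the bookkeeping of signs and domains through these reductions, and in particular producing the explicit primitive $K$ for $\tilde H$; the chain-map properties of $v^{-1}f$, $f'u^{-1}$, $vf'$, and $fu$ are used repeatedly to convert raw correction terms into coboundaries.
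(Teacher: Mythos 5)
Your overall plan---construct an explicit $G_\ast$ from a filled-in reversed square and then reduce $F_\ast G_\ast$ and $G_\ast F_\ast$ to unipotent matrices via the preceding lemma---is sound and in the spirit of the paper's elementary setup. The formula for $G$ is correct (it does satisfy $d_{Y'}G + G d_X = f'u^{-1}-v^{-1}f$), the block-matrix identification of $F_\ast G_\ast$ with the induced morphism of the composite square $(uu^{-1},vv^{-1},H)$ is correct, and the application of the preceding lemma with $U=-P'$, $V=-Q'$ correctly deforms this to the induced morphism of $(\mathrm{id}_X,\mathrm{id}_Y,\tilde H)$.

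The gap is in the final step. You assert that $\tilde H = H - fP' - Q'f$ ``is a boundary of the form $d_YK - Kd_X$ for an explicit degree $-2$ map $K$.'' This is not automatic, and for generic (incoherent) choices of the homotopy data $P,P',Q,Q'$ it is false. For instance, take $X=Y=X'=Y'$, $f=f'=\mathrm{id}$, $u=v=\mathrm{id}$, $F=0$, and $u^{-1}=v^{-1}=\mathrm{id}$; then $vv^{-1}=v^{-1}v=\mathrm{id}$, so $Q$ and $Q'$ may be chosen to be \emph{any} degree $-1$ chain endomorphisms of $Y$. Here $H = P'-Q$ and $\tilde H = -(Q+Q')$, which is a degree $-1$ chain map but has no reason to be null-homotopic. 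So ``$F_\ast G_\ast\simeq\mathrm{id}$'' can genuinely fail for your $G_\ast$, and the promised primitive $K$ does not exist in general.

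The proof is easy to repair, because you need less than what you claimed. The matrix $\bigl(\begin{smallmatrix}\mathrm{id} & -\tilde H\\ 0 & \mathrm{id}\end{smallmatrix}\bigr)$ is already an \emph{isomorphism} of chain complexes (inverse $\bigl(\begin{smallmatrix}\mathrm{id} & \tilde H\\ 0 & \mathrm{id}\end{smallmatrix}\bigr)$, a chain map since $d_Y\tilde H + \tilde H d_X = 0$), regardless of whether $\tilde H$ is a boundary. Hence $F_\ast G_\ast$ is chain-homotopic to an isomorphism of complexes and is therefore a chain homotopy equivalence; the symmetric argument shows $G_\ast F_\ast$ is as well. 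Since in any category a morphism $a$ such that both $ab$ and $ba$ are isomorphisms is itself an isomorphism, $F_\ast$ is an isomorphism in the homotopy category, i.e.\ a chain homotopy equivalence. Replacing the boundary claim with this observation closes the gap; alternatively one may bypass the explicit $G$ altogether and invoke the five lemma in the triangulated homotopy category applied to the morphism of triangles $(v, F_\ast, u[-1])$.
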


The mapping cones have further homotopy coherence.

\begin{proposition}
\label{prop:cone-hfunc}
Suppose we have a chain-homotopy commutative diagram
\begin{equation}
\label{eq:cone-hfunc:coh}
\begin{tikzcd}
X' \ar[r,"f'"] \ar[d,"u"'] & Y' \ar[d,"v"'] \ar[r,"g'"] \ar[dl,Rightarrow,"F"'] & Z' \ar[d,"w"] \ar[dl,Rightarrow,"G"'] \\
X \ar[r,"f"] & Y \ar[r,"g"] & Z
\end{tikzcd}
\end{equation}
of chain complexes such that $gf=0$ and $g'f'=0$ together with a family of morphisms $\Psi=\{\Psi^i:X'^i\to Z^{i-2}\}$ satisfying the equation
\[
d\Psi -\Psi d = g\circ F + G\circ f'
\quad.
\]
Write $\widehat g:\operatorname{Cone}(f)\to Z$ and $\widehat g':\operatorname{Cone}(f')\to Z'$ the morphisms of complexes induced by $g$ and $g'$ respectively.
Then, there is a chain homotopy depicted as below:
\begin{equation}
\begin{tikzcd}
\operatorname{Cone}(f') \ar[r,"{\widehat g'}"] \ar[d,"{F_\ast}"'] & Z' \ar[d,"w"] \ar[dl,Rightarrow,"\widehat G"'] \\
\operatorname{Cone}(f) \ar[r,"\widehat g"] & Z
\end{tikzcd}
\quad.
\end{equation}
More precisely, $\widehat G^i:\operatorname{Cone}(f')^i\to Z^{i-1}$ is presented by the matrix
\[
\widehat G=
\begin{pmatrix}
G & -\Psi
\end{pmatrix}
:Y'^i\oplus X'^{i+1}\to Z^{i-1}
\quad.
\]
\end{proposition}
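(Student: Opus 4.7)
The plan is to verify by direct block-matrix calculation that the row
\[
\widehat G^i = (G^i,\, -\Psi^{i+1}) : \operatorname{Cone}(f')^i \to Z^{i-1}
\]
defines a chain homotopy from $\widehat g \circ F_\ast$ to $w \circ \widehat{g'}$. A preliminary simplification is that the hypotheses $gf=0$ and $g'f'=0$ let one take the null-homotopies in Lemma~\ref{lem:cone=cofib} to be zero, so the factored maps reduce to the row matrices $\widehat g = (g,\,0)$ and $\widehat{g'} = (g',\,0)$. Expanding then gives
\[
\widehat g\,F_\ast = (gv,\, -gF),\qquad w\,\widehat{g'} = (wg',\, 0),
\]
so the difference $\widehat g F_\ast - w\widehat{g'}$ has block form $(gv-wg',\, -gF)$, and this is what $d\widehat G + \widehat G d$ must reproduce.

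The main step is to evaluate $d\widehat G + \widehat G d$ column by column against the cone differential $\bigl(\begin{smallmatrix} d_{Y'} & f'\\ 0 & -d_{X'}\end{smallmatrix}\bigr)$. The $Y'$ slot collapses to $dG + Gd = gv - wg'$ by the defining equation of the chain homotopy $G$. The $X'$ slot assembles as $-d_Z\Psi + Gf' + \Psi d_{X'}$, and this is precisely the point at which the hypothesis $d\Psi - \Psi d = gF + Gf'$ is consumed: substituting $d_Z\Psi - \Psi d_{X'} = gF + Gf'$ cancels the $Gf'$ term and collapses the slot to $-gF$, matching the target. This finishes the verification, and the naturality of $\widehat G$ in the cone triangle is then immediate from its block form.

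The only real obstacle is sign and degree bookkeeping, since three distinct conventions have to mesh: $dF + Fd = fu - vf'$ for the horizontal homotopies, $dH + Hd = -gf$ for the null-homotopies of Lemma~\ref{lem:cone=cofib} (which forces the minus sign carried by $\Psi$ in the statement), and the bidegree $(-2)$ of $\Psi$, which is what lets $\Psi^{i+1}$ fit naturally into the slot $X'^{\,i+1} \to Z^{i-1}$ paired with $G^i$. Once these signs are set up correctly, the content of the proposition is simply that $\Psi$ is exactly the coherence datum needed to balance the off-diagonal term of $d\widehat G + \widehat G d$; no further conceptual input is required.
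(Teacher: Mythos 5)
Your verification is correct and is essentially the same argument as the paper's: the paper likewise takes $\widehat g=(g,\,0)$, $\widehat g'=(g',\,0)$, and computes $d\widehat G+\widehat Gd-\widehat gF_\ast+w\widehat g'$ as a $1\times 2$ block matrix, with the $Y'$-entry killed by the homotopy identity for $G$ and the $X'$-entry killed by the hypothesis on $\Psi$. Your slot-by-slot bookkeeping matches the paper's single matrix computation exactly.
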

\begin{proof}
Using the explicit formulas \eqref{eq:cone-nullinduce} and \eqref{eq:cone-hinduced}, we have
\[
\begin{split}
&d\widehat G+\widehat Gd-\widehat g F_\ast+w\widehat g' \\
&=
d_Z
\begin{pmatrix}
G & -\Psi
\end{pmatrix}
+
\begin{pmatrix}
G & -\Psi
\end{pmatrix}
\begin{pmatrix}
d_{Y'} & f' \\ 0 & -d_{X'}
\end{pmatrix}
-
\begin{pmatrix}
g & 0
\end{pmatrix}
\begin{pmatrix}
v & -F \\ 0 & u
\end{pmatrix}
+
\begin{pmatrix}
wg' & 0
\end{pmatrix}
\\
&=
\begin{pmatrix}
d_ZG+Gd_{Y'}-gv+wg' & -d_Z\Psi+\Psi d_{X'}+gF+Gf'
\end{pmatrix}
\quad.
\end{split}
\]
The last term vanishes by virtue of the assumption, so we obtain the result.
\end{proof}

\begin{remark}
\label{rem:fib-hfunc}
The dual of \cref{prop:cone-hfunc} also holds.
Namely, if the diagram \eqref{eq:cone-hfunc:coh} and the family $\Psi$ are given as in \cref{prop:cone-hfunc}, they induce a chain-homotopy commutative square
\[
\begin{tikzcd}
X' \ar[r,"{\overbar f'}"] \ar[d,"u"'] & \operatorname{Cone}(g')[1] \ar[d,"G_\ast"] \ar[dl,Rightarrow,"\overbar{F}"'] \\
X \ar[r,"{\overbar f}"'] & \operatorname{Cone}(g)[1]
\quad,
\end{tikzcd}
\]
where $\overbar F$ is given by the matrix
\[
\overbar F^i\coloneqq
\begin{pmatrix}
-\Psi \\ F
\end{pmatrix}
:X'^i \to Z^{i-2}\oplus Y^{i-1}
\quad.
\]
\end{remark}

\begin{corollary}
\label{cor:cone-cocone-hfunc}
Suppose we are given a chain-homotopy commutative diagram
\[
\begin{tikzcd}
X' \ar[r,"f'"] \ar[d] & Y' \ar[r,"g'"] \ar[d] \ar[dl,Rightarrow,"F"'] & Z' \ar[r,"h'"] \ar[d] \ar[dl,Rightarrow,"G"'] & W' \ar[d] \ar[dl,Rightarrow,"H"'] \\
X \ar[r,"f"'] & Y \ar[r,"g"'] & Z \ar[r,"h"'] & W
\end{tikzcd}
\]
such that
\[
gf=0\ ,\quad
hg=0\ ,\quad
g'f'=0\ ,\quad
h'g'=0
\]
together with the following data:
\begin{enumerate}[label=\upshape(\roman*)]
  \item families of morphisms $\Psi=\{\Psi^i:X'^i\to Z^{i-2}\}_i$ and $\Xi=\{\Xi^i:Y'^i\to W^{i-2}\}_i$ satisfying
\[
d\Psi-\Psi d = gF + Gf'
\ ,\quad
d\Xi-\Xi d = hG + Hg'
\quad;
\]
  \item a family of morphisms $\Gamma:\{\Gamma^i:X'^i\to W^{i-3}\}$ satisfying
\[
d\Gamma+\Gamma d = h\Psi - \Xi f'
\quad.
\]
\end{enumerate}
Then, the diagram gives rise to a chain-homotopy commutative square
\[
\begin{tikzcd}
\operatorname{Cone}(f') \ar[r] \ar[d,"F_\ast"'] & \operatorname{Cone}(h')[1] \ar[d,"{H_\ast}"] \ar[dl,Rightarrow,"\Gamma_\ast"'] \\
\operatorname{Cone}(f) \ar[r] & \operatorname{Cone}(h)[1]
\end{tikzcd}
\quad,
\]
where the chain homotopy $\Gamma_\ast$ is given by the following matrix:
\[
\Gamma_\ast^i\coloneqq
\begin{pmatrix}
-\Xi & \Gamma \\
G & -\Psi
\end{pmatrix}
:Y'^i\oplus X'^{i+1}\to W'^{i-2}\oplus Z'^{i-1}
\quad.
\]
\end{corollary}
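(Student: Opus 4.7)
The plan is to prove the corollary by making every arrow in the target square explicit and then verifying the chain-homotopy identity component by component, leveraging the alternating sign conventions for the coherence data.

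First, I make the horizontal arrows of the conclusion concrete. Because $gf=0$ strictly, \cref{lem:cone=cofib} (with the null-homotopy taken to be $0$) supplies the canonical chain map $\operatorname{Cone}(f)\to Z$ with matrix $(g,0)$. Because $hg=0$, the formula $z\mapsto(0,z)$ defines a chain map $Z\to\operatorname{Cone}(h)[1]$; indeed the shift places a minus sign on $d_{\operatorname{Cone}(h)}$ so that the chain-map condition reduces exactly to $hg=0$. Composing gives the bottom horizontal arrow $\operatorname{Cone}(f)\to\operatorname{Cone}(h)[1]$, whose block-matrix form is $(y,x)\mapsto(0,g(y))$; the primed arrow is obtained analogously. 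The vertical arrows $F_\ast$ and $H_\ast[1]$ are defined from the homotopies $F$ and $H$ via the formula \eqref{eq:cone-hinduced}, as in \cref{prop:cone-hfunc}.

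Second, I expand both compositions around the square using those block-matrix formulas at degree $i$. The right-then-down composition sends $(y',x')\in Y'^i\oplus X'^{i+1}$ to $(-H g'(y'),\,w g'(y'))$, while the down-then-right composition sends it to $(0,\,gv(y')-gF(x'))$. Subtracting yields the ``defect'' that the proposed homotopy $\Gamma_\ast$ must correct.

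Third, I substitute the prescribed matrix for $\Gamma_\ast$ into the chain-homotopy expression $d_{\operatorname{Cone}(h)[1]}\circ\Gamma_\ast+\Gamma_\ast\circ d_{\operatorname{Cone}(f')}$ and check component by component that the result matches this defect. In the $W$-component, the terms involving $hG$, $h\Psi$, and $\Xi f'$ cancel pairwise once one applies $d\Xi-\Xi d = hG + Hg'$ together with $d\Gamma+\Gamma d = h\Psi-\Xi f'$, leaving only $H g'(y')$. In the $Z$-component, applying $dG + Gd = gv - wg'$ together with $d\Psi-\Psi d = gF + Gf'$ kills the $Gf'$ contributions, leaving $gv(y')-wg'(y')-gF(x')$. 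These are exactly the two components of the defect, up to the overall sign convention fixed by \eqref{eq:hmtp-commsq}.

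The main obstacle is sign bookkeeping. The three coherence data $F$, $\Psi$, $\Gamma$ are homotopies of degrees $-1$, $-2$, $-3$, obeying the alternating conventions $dF+Fd=\cdots$, $d\Psi-\Psi d=\cdots$, $d\Gamma+\Gamma d=\cdots$; combined with the $-d_{X'}$ in the off-diagonal of $d_{\operatorname{Cone}(f')}$ and the extra minus sign placed on $d_{\operatorname{Cone}(h)}$ by the shift $[1]$, it is easy to slip a sign. However, the alternation is precisely what makes every cross term appear twice with opposite signs and cancel, leaving the residual entries $Hg'$ and $gv-wg'-gF$ that witness the non-commutativity of the original diagram in the $W$- and $Z$-directions. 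Once one verifies these cancellations, no further argument is needed, and \cref{cor:cone-cocone-hfunc} follows directly.
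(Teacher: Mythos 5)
Your proof is correct, and it takes a genuinely different route from the paper. The paper's proof is structural: it applies \cref{prop:cone-hfunc} once (with the auxiliary homotopy $\Psi$) to the left two squares, producing a chain-homotopy commutative rectangle between $\operatorname{Cone}(f')\to Z'\to W'$ and $\operatorname{Cone}(f)\to Z\to W$, verifies that the coherence hypotheses for this new rectangle follow from the assumed relations on $\Xi$ and $\Gamma$, and then applies \cref{prop:cone-hfunc} again in its dual form (\cref{rem:fib-hfunc}). You instead unfold all maps into their block-matrix form and verify the identity $d\Gamma_\ast+\Gamma_\ast d = (\text{down-right})-(\text{right-down})$ entry by entry. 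Both approaches are valid; the paper's iterated application illuminates the inductive structure (it would extend to longer factorizations), while your direct computation is shorter and self-contained. Your bookkeeping is right: the $(W,Y')$-entry collapses to $Hg'$, the $(W,X')$-entry vanishes by the $\Gamma$-equation, the $(Z,Y')$-entry gives $gv-wg'$ by the $G$-equation, and the $(Z,X')$-entry gives $-gF$ by the $\Psi$-equation, which together match the defect of the square.

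One slip worth flagging: the map $Z\to\operatorname{Cone}(h)[1]$, $z\mapsto(0,z)$, is \emph{not} a chain map unless $h=0$. With the paper's conventions $\operatorname{Cone}(h)[1]^i=W^{i-1}\oplus Z^i$ and $d_{\operatorname{Cone}(h)[1]}^i=\begin{pmatrix}-d_W&-h\\0&d_Z\end{pmatrix}$, the chain-map condition requires $h(z)=0$, not $hg=0$. What is true, and what your argument actually needs, is that the composite $\operatorname{Cone}(f)\to Z\to\operatorname{Cone}(h)[1]$, $(y,x)\mapsto(0,g(y))$, \emph{is} a chain map, precisely because both $gf=0$ and $hg=0$ hold: the $W$-component of $d(0,g(y))$ is $-hg(y)=0$ and the extra $f(x)$ produced by $d_{\operatorname{Cone}(f)}$ is killed by $gf=0$. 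Since the rest of your computation only uses the composite, this is a local misstatement rather than a gap, but the intermediate factoring through $Z$ should either be dropped or replaced by a direct verification of the composite.
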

\begin{proof}
Applying \cref{prop:cone-hfunc} with $H=0$ and $H'=0$, we obtain the following chain-homotopy commutative diagram
\[
\begin{tikzcd}
\operatorname{Cone}(f') \ar[r,"\widehat g'"] \ar[d,"F_\ast"] & Z' \ar[r,"h'"] \ar[d] \ar[dl,Rightarrow,"\widehat G"'] & W' \ar[d] \ar[dl,Rightarrow,"H"'] \\
\operatorname{Cone}(f) \ar[r,"\widehat g"'] & Z \ar[r,"h"'] & W
\end{tikzcd}
\quad.
\]
We note that both of the horizontal compositions vanish while the assumption on $\Xi$ and $\Gamma$ implies
\[
d_W
\begin{pmatrix}
\Xi & -\Gamma
\end{pmatrix}
+
\begin{pmatrix}
\Xi & -\Gamma
\end{pmatrix}
\begin{pmatrix}
d_{Y'} & f' \\
0 & -d_{X'}
\end{pmatrix}
= h\widehat G + H\widehat g'
\quad.
\]
Therefore, applying \cref{prop:cone-hfunc} again (or its dual more precisely; see \cref{rem:fib-hfunc}), one obtains the result.
\end{proof}

\subsection{Proof of \cref{theo:genus1-inv}: I}
\label{sec:invariance:invO4}

We now begin the proof of \cref{theo:genus1-inv}.
In this section, we first discuss the squares~\eqref{eq:genus1-inv:O4}; as mentioned in \cref{rem:invO4e}, we especially prove the homotopy commutativity of the left square in \eqref{eq:genus1-inv:O4}.

In the proof, we make use of \cref{cor:cone-cocone-hfunc}.
For this, we first construct a chain-homotopy commutative square in the following form:
\begin{equation}
\label{eq:O4diagram}
\begin{tikzcd}
\brcOf*{\diagCrossHRivOLWithLabel} \ar[r,"-\delta_-^{\mathsf R}"] \ar[d,"\gamma"'] & \brcOf*{\diagCrossVRivOLWithLabel} \ar[r,"\Phi^{\mathsf R}"] \ar[d,"-\omega"] \ar[dl,Rightarrow,"F"'] & \brcOf*{\diagCrossVRivOLWithLabel} \ar[r,"-\delta_+^{\mathsf R}"] \ar[d,"\omega"] \ar[dl,Rightarrow,"G"'] & \brcOf*{\diagCrossHRivOLWithLabel} \ar[d,"\gamma"] \ar[dl,Rightarrow,"H"'] \\
\brcOf*{\diagCrossHRivORWithLabel} \ar[r,"-\delta_-^{\mathsf L}"] & \brcOf*{\diagCrossVRivORWithLabel} \ar[r,"\Phi^{\mathsf L}"] & \brcOf*{\diagCrossVRivORWithLabel} \ar[r,"-\delta_+^{\mathsf L}"] & \brcOf*{\diagCrossHRivORWithLabel}
\end{tikzcd}
\quad,
\end{equation}
here $\delta_\pm^{\mathsf R}$ and $\delta_\pm^{\mathsf L}$ are the saddle operations representing the appropriate smoothing changes on the crossings, say, in the right and the left of the vertical strands; while $\Phi^{\mathsf R}$ and $\Phi^{\mathsf L}$ are the morphisms given in \eqref{eq:Phi} on those crossings.
On the other hand, $\gamma$ and $\omega$ are morphisms given as follows (see \cref{sec:univ-Kh:univBracket} for sign symbols):
\begin{equation}
\label{eq:def-gamma-omega}
\begin{gathered}
\gamma\coloneqq
\BordEquivOO\otimes\mathrm{id}
+ \BordEquivOl\otimes b'_\dagger\cra{b}
+ \BordEquivlO\otimes a'_\dagger\cra{a}
+ \BordEquivll\otimes (a'b')_\dagger\cra{(ab)}
\quad,\\
\omega\coloneqq
\BordTriId\otimes a'_\dagger\cra{b}
+\BordRTwoBarRUnit\otimes b'_\dagger\cra{b}
- \BordRTwoLCounit\otimes a'_\dagger\cra{a}
- \BordRTwoLCounitRUnit\otimes b'_\dagger\cra{a}
\quad.
\end{gathered}
\end{equation}

\begin{lemma}
\label{lem:gamma-omega-heq}
The morphisms $\gamma$ and $\omega$ given in \eqref{eq:def-gamma-omega} are chain homotopy equivalences.
\end{lemma}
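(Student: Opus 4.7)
The plan is to establish that $\gamma$ and $\omega$ are the chain homotopy equivalences realizing the Bar--Natan invariance between the two sides of a local Reidemeister-type move. Both source/target pairs in \eqref{eq:def-gamma-omega} correspond to tangle diagrams that are related by sliding a vertical strand past a configuration of two crossings, which can be decomposed as a finite sequence of Reidemeister moves. Hence, by \cref{thm:Bar-Natan}, there exist chain homotopy equivalences between the complexes; the task is to identify the specific formulas in \eqref{eq:def-gamma-omega} with such equivalences up to chain homotopy.

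For $\gamma$, I would first expand both $\brcOf*{\diagCrossHRivOL}$ and $\brcOf*{\diagCrossHRivOR}$ as sums over the four states of the pair of crossings $\{a,b\}$ (respectively $\{a',b'\}$). The sign-symbol factors $b'_\dagger \cra{b}$, $a'_\dagger \cra{a}$, $(a'b')_\dagger\cra{(ab)}$, and the pure identity on the empty state in \eqref{eq:def-gamma-omega} show that $\gamma$ is diagonal with respect to the natural bijection of state sets induced by the planar slide, with each diagonal component given by a cobordism $\BordEquivOO$, $\BordEquivOl$, $\BordEquivlO$, or $\BordEquivll$. Each such cobordism is isotopic (as a $2$-bordism with boundary) to a cylinder, so the candidate inverse $\gamma^{-1}$ is obtained simply by reversing the slide; verifying $\gamma\gamma^{-1}=\mathrm{id}$ and $\gamma^{-1}\gamma=\mathrm{id}$ reduces to a direct diffeomorphism check of each diagonal block, plus sign bookkeeping coming from the sign-symbol morphisms of \cref{sec:univ-Kh:univBracket}.

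For $\omega$ the argument is genuinely more delicate because the four constituent $2$-bordisms in \eqref{eq:def-gamma-omega} do not align diagonally with state decompositions; they include caps, cups, and the triangle cobordism $\BordTriId$. Here the plan is to mimic Bar--Natan's original proof of Reidemeister~\RomNum2 invariance~\cite[Section~4.3]{BarNatan2005}: construct $\omega^{-1}$ as a similar alternating sum of bordisms (with sign symbols $a_\dagger\cra{b'}$, etc.), and then verify that $\omega\circ\omega^{-1}-\mathrm{id}$ and $\omega^{-1}\circ\omega-\mathrm{id}$ are null-homotopic. The null-homotopies are provided by explicit $2$-bordisms with one handle attachment; that they do the job is where the $4Tu$ relation enters, since the four off-diagonal contributions to each composite combine into a $4Tu$-quadruple that collapses. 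The $S$ and $T$ relations then handle the residual closed components.

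The main obstacle will be the explicit verification for $\omega$: organizing the $4\times 4$ matrix of cobordisms into the correct $4Tu$-patterns and producing an explicit chain homotopy that commutes with the differential. This is essentially a careful but routine case analysis across the four smoothing states of $a,b$; the novelty over the Reidemeister~\RomNum2 case of \cite{BarNatan2005} is that here the local pictures involve an additional extra crossing on the side, which enlarges the bookkeeping but introduces no essentially new difficulty since the $4Tu$ relation is local. Once both $\gamma$ and $\omega$ are shown to admit such homotopy inverses, the lemma follows.
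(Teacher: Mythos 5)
For $\gamma$ your argument is essentially the paper's: $\gamma$ is diagonal with respect to the obvious state bijection, with each diagonal block a cylinder, hence it is actually an isomorphism of chain complexes (not merely a chain homotopy equivalence), so nothing further needs to be said. Your state-by-state unwinding is more verbose than necessary but lands in the same place.

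For $\omega$, however, you are proposing to redo Bar-Natan's Reidemeister~\RomNum2 calculation from scratch --- constructing a candidate $\omega^{-1}$ as an alternating sum of cobordisms and then producing explicit null-homotopies for $\omega\omega^{-1}-\mathrm{id}$ and $\omega^{-1}\omega-\mathrm{id}$ by organizing the terms into $4Tu$-quadruples. While this could in principle be carried out, you have missed the structural observation that makes the paper's proof a two-line argument: the four terms of $\omega$ in \eqref{eq:def-gamma-omega} can be regrouped so that $\omega$ \emph{literally factors} as a composition $\RMoveBar2\circ\RMove2$, where $\RMove2$ is Bar-Natan's Reidemeister~\RomNum2 simplification applied to one pair of parallel strands (removing the crossings $a,b$) and $\RMoveBar2$ is its homotopy inverse applied to a \emph{different} pair of strands (creating the crossings $a',b'$). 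Since each factor is already known to be a chain homotopy equivalence by \eqref{eq:RMove2-chaineq}, and compositions of chain homotopy equivalences are chain homotopy equivalences, there is nothing left to verify --- no $4Tu$-bookkeeping and no explicit null-homotopy is needed. Your plan also remains at the level of intent (``construct $\omega^{-1}$ as a similar alternating sum'', ``the four off-diagonal contributions $\ldots$ collapse'') without exhibiting the formulas, so as written it would not constitute a complete proof even along the harder route; but the more important point is that the harder route is avoidable once you recognize the factorization.
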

\begin{proof}
It is obvious that $\gamma$ is even an isomorphism.
On the other hand, recall that Bar-Natan defined in \cite[pp.1458]{BarNatan2005} chain homotopy equivalences
\begin{equation}
\label{eq:RMove2-chaineq}
\RMove2:\brcOf*{\diagRiiRightWithLabel}
\rightleftarrows\brcOf*{\diagRiiNil}[1]:\RMoveBar2
\end{equation}
that are given as follows:
\[
\begin{gathered}
\RMove2^i\coloneqq \BordRTwoId\otimes\cra{b} - \BordRTwoCounit\otimes\cra a:
\brcOf*{\diagRiiRightWithLabel}^i
\to\brcOf*{\diagRiiNil}^{i-1}
\quad,\\
\RMoveBar2^i\coloneqq \BordRTwoId\otimes b_\dagger + \BordRTwoBarUnit\otimes a_\dagger:
\brcOf*{\diagRiiNil}^{i-1}\to
\brcOf*{\diagRiiRightWithLabel}^i
\quad.
\end{gathered}
\]
It is easily seen that $\omega$ is realized as a composition of $\RMove2$ and $\RMoveBar2$ with respect to different pairs of strands.
Hence it is also a chain homotopy equivalence.
\end{proof}

To complete the diagram \eqref{eq:O4diagram}, we define the following families of morphisms:
\begin{equation}
\label{eq:FGH}
\begin{gathered}
F^i\coloneqq(-1)^i\left(
\BordCompFFst\otimes b'_\dagger\cra{(ab)}
+ \BordCompFSnd\otimes\cra{b}
\right)
:\brcOf*{\diagCrossHRivOLWithLabel}^i\to\brcOf*{\diagCrossVRivORWithLabel}^{i-1}
\quad,\\[2ex]
G^i\coloneqq
\begin{multlined}[t]
(-1)^i\left(
\BordCompGFst\otimes a'_\dagger\cra{(ba)}
+\BordCompGSnd\otimes b'_\dagger\cra{(ba)}
+\BordCompGTrd\otimes\cra{b}
-\BordCompGFth\otimes\cra{a}
\right) \\
:\brcOf*{\diagCrossVRivOLWithLabel}^i
\to \brcOf*{\diagCrossVRivORWithLabel}^{i-1}
\quad,
\end{multlined}
\\[2ex]
H^i\coloneqq(-1)^{i+1}\left(
\BordCompHFst\otimes a'_\dagger\cra{(ba)}
+\BordCompHSnd\otimes\cra{a}
\right)
:\brcOf*{\diagCrossVRivOLWithLabel}^i
\to \brcOf*{\diagCrossHRivORWithLabel}^{i-1}
\quad.
\end{gathered}
\end{equation}
By direct computations, one can prove that the families $F=\{F^i\}_i$, $G=\{G^i\}_i$, and $H=\{H^i\}_i$ given in \eqref{eq:FGH} define chain homotopies
\[
F:\omega\delta_-^{\mathsf R}\Rightarrow -\delta_-^{\mathsf L}\gamma
\ ,\quad
G:\omega\Phi^{\mathsf R}\Rightarrow -\Phi^{\mathsf L}\omega
\ ,\quad
H:-\gamma\delta_+^{\mathsf R}\Rightarrow -\delta_+^{\mathsf L}\omega
\quad.
\]

We next construct families $\Psi_-=\{\Psi_-^i\}_i$ and $\Psi_+=\{\Psi_+^i\}_i$ of morphisms
\[
\Psi^i_+:\brcOf*{\diagCrossHRivOLWithLabel}^i\to\brcOf*{\diagCrossVRivORWithLabel}^{i-2}
\ ,\quad
\Psi^i_-:\brcOf*{\diagCrossVRivOLWithLabel}^i\to\brcOf*{\diagCrossHRivORWithLabel}^{i-2}
\]
which are coherences of the left two squares and the right ones in \eqref{eq:O4diagram} in the sense of \cref{prop:cone-hfunc}, that is, they satisfy
\begin{gather}
\label{eq:O4-coherence:minus}
d\Psi_- -\Psi_-d = \Phi^{\mathsf L}F + G\delta_-^{\mathsf R}
\quad,
\\
\label{eq:O4-coherence:plus}
d\Psi_+ - \Psi_+d = \delta_+^{\mathsf L}G + H\Phi^{\mathsf R}
\quad.
\end{gather}

\begin{lemma}
\label{lem:Psis}
We define
\begin{equation}
\label{eq:Psis:def}
\begin{gathered}
\Psi^i_-\coloneqq\BordPsiMinus\otimes\cra{(ab)}
:\brcOf*{\diagCrossHRivOLWithLabel}^i\to\brcOf*{\diagCrossVRivORWithLabel}^{i-2}
\quad,\\
\Psi^i_+\coloneqq\BordPsiPlus\otimes\cra{(ab)}
:\brcOf*{\diagCrossVRivOLWithLabel}^i\to\brcOf*{\diagCrossHRivORWithLabel}^{i-2}
\quad.
\end{gathered}
\end{equation}
Then, they satisfy the equations \eqref{eq:O4-coherence:minus} and \eqref{eq:O4-coherence:plus} respectively.
\end{lemma}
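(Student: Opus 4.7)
The plan is to verify each of \eqref{eq:O4-coherence:minus} and \eqref{eq:O4-coherence:plus} by direct expansion, treating them in parallel; by \cref{prop:Mir} the second identity in fact also follows from the first via mirror-image duality once one checks that the homotopy constructions of \cref{sec:invariance:cone} transport correctly.

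Write $a, b, c$ for the crossings in the source of $\Psi_-$ and $a', b', c'$ for those in the target. Both sides of \eqref{eq:O4-coherence:minus} are $\mathbb{Z}$-linear combinations of tensor products of $2$-bordisms with sign operators. I would begin by expanding state-wise. On the left, because $\Psi_-$ already absorbs $a$ and $b$ through the sign factor $\cra{(ab)}$, most contributions to $d\Psi_-$ and $\Psi_- d$ cancel in pairs by the standard $d^2=0$ argument for the universal bracket complex; the only terms that survive are those in which the added saddle sits at the remaining crossing $c$ on the source side or at $a', b', c'$ on the target side. On the right, $G \delta_-^{\mathsf R}$ produces four summands (one per term of $G$ in \eqref{eq:FGH}), while $\Phi^{\mathsf L} F$ produces four summands coming from the pairwise products of the two terms of $\Phi^{\mathsf L}$ with the two terms of $F$.

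The verification then splits into a sign computation and a cobordism computation. The sign identities follow from the rules \eqref{SymbolCheck}--\eqref{SymbolRightWedge}: in every surviving term the sign factor collapses to $\cra{(ab)}$ up to a controlled sign, and the comparison reduces to routine commutation identities such as $\cra{(ab)}(\wedge c) = \pm(\wedge c)\cra{(ab)}$. The cobordism identities compare formal sums of $2$-bordisms modulo the relations \ref{relK:S}, \ref{relK:T}, and \ref{relK:4Tu}. Each left-hand summand is obtained from the bordism pictured in \eqref{eq:Psis:def} by gluing one extra saddle, while each right-hand summand is a composition of two of the simpler bordisms featuring in $\delta_\pm$, $\Phi$, $F$, or $G$. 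These are matched by applying \ref{relK:4Tu} once in a neighborhood of the $1$-handle of $\Psi_-$ together with the attached saddle, and then using \ref{relK:S} or \ref{relK:T} to dispose of any isolated sphere or torus that appears.

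The main obstacle is organizational rather than conceptual: even after the $d^2$-style cancellations, the number of surviving terms is sizable, and one must keep careful track of corner orientations (inherited from the checkerboard conventions fixed in \cref{sec:univ-Kh:univBracket}) throughout the gluings. The geometric heart of the argument is localized to a small neighborhood of the attaching region of the handle in $\Psi_\pm$, where a single application of \ref{relK:4Tu} converts the handle-plus-saddle shape into precisely the two shapes it is required to reproduce.
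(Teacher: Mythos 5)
Your overall template --- expand both sides of the coherence equation termwise, match some terms on the nose, and invoke \ref{relK:4Tu} in a small neighborhood of the $1$-handle for the rest --- is the same as the paper's. However, two of the steps as you describe them do not work.

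First, the reduction of \eqref{eq:O4-coherence:plus} to \eqref{eq:O4-coherence:minus} via mirror duality (\cref{prop:Mir}) is not the right symmetry. The functor $\rho_2$ swaps over- and under-crossings, so applied to the diagram \eqref{eq:O4diagram} (which is entirely in the ``over'' family of \eqref{eq:moveSing}) it produces the ``under'' family; that is precisely what \cref{rem:invO4e} uses to pass between the two squares of \eqref{eq:genus1-inv:O4}, not between the two coherence equations. The paper instead deduces \eqref{eq:O4-coherence:plus} from \eqref{eq:O4-coherence:minus} by rotating the entire picture $180$ degrees about the $z$-axis, which preserves the over/under data while exchanging the left two and right two squares of \eqref{eq:O4diagram}; one also needs the sign flip of $\Phi$ under this rotation, which is \cref{rem:Phi-prime}. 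Without that symmetry your argument leaves \eqref{eq:O4-coherence:plus} unproved.

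Second, your description of which terms survive is off. The diagrams $\diagCrossHRivOL$ and $\diagCrossVRivOR$ already have the central crossing $c$ smoothed away: the only crossings remaining are $a,b$ in the source and $a',b'$ in the target, so there is no ``remaining crossing $c$.'' There is also no $d^2=0$-type cancellation here --- $\Psi_-$ is not a differential. The correct mechanism is that the sign factor $\cra{(ab)}$ annihilates any state of the source not containing both $a$ and $b$; hence $\Psi_- d$ has exactly two surviving summands (the source differential adds the one missing generator $a$ or $b$) and $d\Psi_-$ has exactly two (the target differential adds $a'$ or $b'$). On the right-hand side the paper then matches two of these four surviving terms verbatim with terms of $G\delta_-^{\mathsf R}$, and handles the other two by a single application of \ref{relK:4Tu} with tubes attached to the disks in the relevant cobordisms; the relations \ref{relK:S} and \ref{relK:T} are not needed anywhere in this computation.
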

\begin{proof}
We only show the equation \eqref{eq:O4-coherence:minus}; actually \eqref{eq:O4-coherence:plus} is obtained by rotating \eqref{eq:O4-coherence:minus} in $180$ degrees around the $z$-axis with a little care about the change of the sign of the morphism $\Phi$ (see \cref{rem:Phi-prime}).

By direct computations, we obtain
\[
\begin{gathered}
d\Psi^i_-
=(-1)^i\mathbin{}\BordPsiMinus\BordDiffVRightAO\otimes a'_\dagger\cra{(ab)}
+ (-1)^i\mathbin{}\BordPsiMinus\BordDiffVRightOB\otimes b'_\dagger\cra{(ab)}
\quad,
\\
-\Psi_-^{i+1}d
=(-1)^i\mathbin{}\BordDiffHLeftAl\BordPsiMinus\otimes\cra{b}
- (-1)^i\mathbin{}\BordDiffHLeftlB\BordPsiMinus\otimes\cra{a}
\quad.
\end{gathered}
\]
On the other hand, as for the right hand side of \eqref{eq:O4-coherence:minus}, we have
\[
(\Phi^{\mathsf L})^iF^i
= \begin{multlined}[t]
(-1)^i\left(
\BordCompFFst\BordLeftPhiFstOl
-\BordCompFFst\BordLeftPhiSndOl
\right)\otimes b'_\dagger\cra{(ab)}
\\
+(-1)^i\left(
\BordCompFSnd\BordLeftPhiFstOO
-\BordCompFSnd\BordLeftPhiSndOO
\right)\otimes\cra{b}
\end{multlined}
\]
and
\[
G^i(\delta_-^{\mathsf R})^i
=
\begin{multlined}[t]
(-1)^i\mathbin{}\BordRightDeltaNll\BordCompGFst\otimes a'_\dagger\cra{(ab)}
+ (-1)^i\mathbin{}\BordRightDeltaNll\BordCompGSnd\otimes b'_\dagger\cra{(ab)}
\\
+(-1)^i\mathbin{}\BordRightDeltaNOl\BordCompGTrd\otimes\cra{b}
-(-1)^i\mathbin{}\BordRightDeltaNlO\BordCompGFth\otimes\cra{a}
\quad.
\end{multlined}
\]
Comparing the terms, we obtain
\begin{equation}
\label{eq:prf:Psis:eq-direct}
\begin{gathered}
(\text{the first term of $d\Psi^i_-$})
= (\text{the first term of $G^i(\delta^{\mathsf R}_-)^i$})
\quad,\\
(\text{the second term of $-\Psi^{i+1}_-d$})
= (\text{the fourth term of $G^i(\delta^{\mathsf R}_-)^i$})
\quad.
\end{gathered}
\end{equation}
In addition, due to the relation \ref{relK:4Tu} with respect to tubes attached to the disks in the cobordisms
\[
\tikz{%
  \nodeTikzBox{BordCompFFst};
  \draw[blue,densely dotted] (.4,0) circle (.13);
  \draw[blue,densely dotted] (-.1,-.8) circle (.13);
  \draw[blue] (.3,.4) circle (.13);
  \draw[blue] (-.5,.2) circle (.13);
}
\ ,\quad
\tikz{%
  \nodeTikzBox{BordCompFSnd};
  \draw[blue,densely dotted] (-.3,.7) circle (.13);
  \draw[blue,densely dotted] (-.1,-.2) circle (.13);
  \draw[blue] (.3,-.5) circle (.13);
  \draw[blue] (-.5,-.5) circle (.13);
}
\quad,
\]
we also obtain the equations
\begin{equation}
\label{eq:prf:Psis:eq-4Tu}
\begin{gathered}
(\text{the second term of $d\Psi_-^i$})
=
\begin{multlined}[t]
(\text{the first term of $(\Phi^{\mathsf L})^iF^i$}) \\
+ (\text{the second term of $G^i(\delta^{\mathsf R}_-)^i$})
\quad,
\end{multlined}
\\
(\text{the second term of $-\Psi_-^{i+1}d$})
=
\begin{multlined}[t]
(\text{the first term of $(\Phi^{\mathsf L})^iF^i$}) \\
+ (\text{the third term of $G^i(\delta^{\mathsf R}_-)^i$})
\quad.
\end{multlined}
\end{gathered}
\end{equation}
Adding \eqref{eq:prf:Psis:eq-direct} and \eqref{eq:prf:Psis:eq-4Tu}, we obtain the result.
\end{proof}

Finally, we apply \cref{cor:cone-cocone-hfunc} to the diagram \ref{eq:O4diagram}.
In fact, we have
\[
\delta^{\mathsf R}_+\Psi_-
= \BordRightDeltaPll\BordPsiMinus\otimes\cra{ab}
= \BordPsiPlus\BordLeftDeltaNOO\otimes\cra{ab}
= \Psi_+\delta^{\mathsf L}_-
\quad.
\]
Hence, all the assumptions in \cref{cor:cone-cocone-hfunc} are satisfied with respect to $\Psi_-$, $\Psi_+$, and $\Gamma=0$.
Therefore, a chain homotopy in the left square of \eqref{eq:genus1-inv:O4} is induced.
Moreover, by \cref{lem:gamma-omega-heq}, all the vertical arrows in \eqref{eq:O4diagram} are chain homotopy equivalences.
It then follows from \cref{cor:cone-heq} that the vertical arrows in \eqref{eq:genus1-inv:O4} are chain homotopy equivalences.
This proves the homotopy commutativity of the squares in \eqref{eq:genus1-inv:O4}.

\subsection{Proof of \cref{theo:genus1-inv}: II}
\label{sec:invariance:invO5}

In contrast to the arguments in the previous section, the homotopy commutativity of \cref{eq:genus1-inv:O5} is relatively easy.
In fact, it is a consequence of the following lemma, whose proof is left to the reader as it is mostly straightforward.

\begin{lemma}
\label{lem:O5-expanded}
The following diagram commutes:
\begin{equation}
\label{eq:O5-expanded}
\begin{tikzcd}
\brcOf*{\diagRiiNil}[1] \ar[r,"\RMoveBar2^\circ"] \ar[d,"\RMoveBar2"'] & \brcOf*{\diagRiiLeftUpWithSign} \ar[d,"\widehat\Phi_a"] \\
\brcOf*{\diagRiiRightUpWithSign} \ar[r,"\widehat\Phi_b"] & \brcOf*{\diagRvFTwWithSign}[1]
\end{tikzcd}
\quad,
\end{equation}
here the morphisms $\RMoveBar2$ and $\RMoveBar2^\circ$ are the ones given in \eqref{eq:RMove2-chaineq} while the latter is applied to $180$-degree-rotated diagrams.
\end{lemma}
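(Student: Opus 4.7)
The plan is to verify the commutativity of \eqref{eq:O5-expanded} by expanding each composition into a sum of cobordisms and matching the resulting terms pairwise. Using \eqref{eq:RMove2-chaineq} for $\RMoveBar2$ (and its $180$-degree-rotated variant $\RMoveBar2^\circ$) together with \cref{prop:delta-Phi} for $\widehat\Phi_a$ and $\widehat\Phi_b$, each composition around the square is an a priori sum of $2\times 2 = 4$ terms of the form $(\text{cobordism})\otimes(\text{sign symbol})$. Since the source $\brcOf*{\diagRiiNil}[1]$ has trivial sign module $E_\varnothing$, evaluating the sign-module factors reduces each composition to an explicit formal sum of cobordisms from two parallel strands to the singular configuration underlying $\diagRvFTwWithSign$.

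Next I will identify these cobordisms pairwise. Each surviving summand of $\widehat\Phi_a\circ\RMoveBar2^\circ$ is obtained by attaching a cup-with-saddle or a single saddle between the two strands (from the $R_2$-morphism) followed by a $1$-handle attachment in a neighborhood of the crossing $a$ (from the genus-one part); the corresponding summand of $\widehat\Phi_b\circ\RMoveBar2$ on the other path is entirely analogous with $b$ in place of $a$. Because the two diagrams $\diagRiiLeftUpWithSign$ and $\diagRiiRightUpWithSign$ are mirror-symmetric across the vertical axis joining the singular points, and because the $1$-handle of $\widehat\Phi$ is attached between the same pair of strands in either case, a planar isotopy identifies each cobordism on one path with a cobordism on the other. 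In the handful of cases where the $1$-handle runs over one of the disks of the $R_2$-cobordism, the identification will use a single application of the \ref{relK:4Tu} relation, much as in the proof of \cref{lem:Psis}.

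The main obstacle will be the sign bookkeeping rather than the geometric identification. Signs enter through (i) the shift $[1]$ on the source, which by \cref{NotationShift} contributes a global factor $(-1)$; (ii) the Koszul signs in the differential of a tensor product of chain complexes, which enter the cone construction defining $\widehat\Phi$ via \cref{prop:coneVSbracket} and \cref{lem:cone=cofib}; and (iii) the symbols $\cra{}$, $(\wedge)$ and $()_\dagger$ of \cref{SymbolCheck,SymbolLeftWedge,SymbolRightWedge}, whose values depend on the linear order chosen on the crossings of the intermediate diagrams. My strategy is to fix once and for all a consistent linear order on the crossings of $\diagRiiLeftUpWithSign$ and $\diagRiiRightUpWithSign$ that is compatible with the $180$-degree rotation relating them, and then to check directly that the terms on each path come with matching signs, term by term.
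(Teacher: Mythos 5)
Your plan — expand each composite into a formal sum of cobordisms tensored with sign-module maps, use the sign-module evaluations to kill terms, and then identify the surviving cobordisms — is exactly the ``mostly straightforward'' direct computation the paper has in mind when it leaves the proof to the reader, and it would succeed. The main steps you anticipate (vanishing of cross terms of the form $\cra{a}\circ(\wedge b)$, term-by-term matching, careful signs) are the right ones.

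A few imprecisions are worth fixing before you carry out the calculation, though. First, by \cref{NotationShift} the shift $[1]$ only changes the sign of the \emph{differential}; it introduces no factor on the components of a chain map, so item (i) in your sign inventory is spurious (harmlessly so here, since both compositions carry identical shifts). Similarly, Koszul-type signs arise in the differential of a bifunctor applied to two complexes, not in the degreewise composition of two chain maps; the only sign phenomena you need to track are the $(-1)^{\mu}$ and $(-1)^{\nu}$ in \cref{SymbolCheck,SymbolLeftWedge,SymbolRightWedge} and the global sign in the two terms of $\widehat\Phi$. Second, the two tangle pictures $\diagRiiLeftUpWithSign$ and $\diagRiiRightUpWithSign$ are related by the $180$-degree rotation explicitly named in the lemma's hypothesis, not by a mirror reflection. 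This distinction matters: a reflection is orientation-reversing on the plane and, by \cref{rem:Phi-prime}, switching the handedness of the $1$-handle attachment in $\Phi$ introduces a sign (via \ref{relK:4Tu}). The $180$-degree rotation is orientation-preserving, so the identification of the genus-one cobordisms on the two paths really is a planar isotopy, as you want — but you should say rotation, not mirror, and note that this is what keeps the handedness of $\Phi$ unchanged. Finally, the source and target of the square are complexes of ordinary (non-singular) tangle diagrams; referring to ``the singular configuration underlying $\diagRvFTwWithSign$'' or ``the vertical axis joining the singular points'' is a misnomer — there are no double points in any of the four diagrams.

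With these adjustments the argument is a routine verification: exactly one of the two summands of each of $\RMoveBar2$ and $\RMoveBar2^\circ$ survives composition with $\cra{b}$ or $\cra{a}$ respectively, so each side of the square is a signed difference of two cobordisms, and the pairing of the remaining four cobordisms is a direct inspection.
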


We write $\RMove2^\circ$ the chain homotopy inverse to $\RMoveBar2^\circ$ in the diagram \eqref{eq:O5-expanded} described in \eqref{eq:RMove2-chaineq}.
Using the chain homotopy $\mathrm{id}\Rightarrow\RMove2^\circ\RMoveBar2^\circ$, one can define a chain homotopy $H$ as in the diagram below:
\[
\begin{tikzcd}
\brcOf*{\diagRiiLeftUpWithSign} \ar[r,"\widehat\Phi_a"] \ar[d,"\RMoveBar2\RMove2^\circ"'] & \brcOf*{\diagRvFTwWithSign}[1] \ar[d,equal] \ar[dl,Rightarrow,"H"'] \\
\brcOf*{\diagRiiRightUpWithSign} \ar[r,"\widehat\Phi_b"] & \brcOf*{\diagRvFTwWithSign}[1]
\end{tikzcd}
\quad.
\]
Since the morphism $\RMoveBar2$ is also a chain homotopy equivalence, the left vertical arrow above is a chain homotopy equivalence.
This verifies the homotopy commutativity of \cref{eq:genus1-inv:O5}.
Therefore, the proof of \cref{theo:genus1-inv} is finally completed.

\section{Universal Khovanov complex for singular tangles}
\label{sec:UKHsing}

Using the invariance of the genus-one morphism $\widehat\Phi$ proved in \cref{sec:invariance}, we can now extend the universal Khovanov complex to singular tangles so that an analogue of Vassiliev skein relation holds.

\subsection{Definition}
\label{sec:UKHsing:def}

We first extend the universal bracket complex $\brcOf{\blank}$ to singular tangle diagrams.
For a singular tangle diagram $D$, we denote by $c^\sharp(D)$ the set of double points of $D$.
We call each subset $\mathfrak r\subset c^\sharp(D)$ a \emph{resolution scheme} of $D$ and denote by $|\mathfrak r|$ the cardinality of $\mathfrak r$.
For each resolution scheme $\mathfrak r$, we obtain an ordinary tangle diagram (i.e.~without double points) $D_{\mathfrak r}$ which is identical to $D$ except near the double points where we have
\[
\begin{array}{c|c|c}
  D & D_{\mathfrak r}\;\text{($b\notin\mathfrak r$)} & D_{\mathfrak r}\;\text{($b\in\mathfrak r$)} \\\hline\rule{0pt}{5ex}
  \diagSingUpWith{b} & \diagCrossNegUpWith{b^-} & \diagCrossPosUpWith{b^+}
\end{array}
\]
for each $b\in c^\sharp(D)$.
In particular, a checkerboard coloring $\chi$ on $(\mathbb R\times[0,1])\setminus D$ induces that on $D_{\mathfrak r}$ for each resolution scheme $\mathfrak r$.
Hence, if $b\notin\mathfrak r$, the genus-one morphism yields a morphism of chain complexes
\[
\widehat\Phi_{\mathfrak r,b}:\brcOf{D_{\mathfrak r}^\chi}\to \brcOf{D_{\mathfrak r\cup\{b\}}^\chi}[1]
\quad.
\]
We now define $\brcOf{D^\chi}$ as a graded object in $\Cob(\partial^-D^\chi,\partial^+D^\chi)$ by
\[
\brcOf{D^\chi}
\coloneqq\bigoplus_{\mathfrak r\subset c^\sharp(D)}\brcOf{D^\chi_{\mathfrak r}}[2|\mathfrak r|]\otimes E_{\mathfrak r}
\quad.
\]
The differential $d_D=\{d^i_D\}_i$ consists of $d^i_D:\brcOf{D^\chi}^i\to\brcOf{D^\chi}^{i+1}$ which is componentwisely given by
\[
\begin{multlined}[t]
d_{D_{\mathfrak r}}^{i+2|\mathfrak r|}\otimes{\mathrm{id}_{E_{\mathfrak r}}}+\sum_{b\in c^\sharp(D)\setminus\mathfrak r}\widehat\Phi_{\mathfrak r,b}\otimes(\wedge b): \\
\brcOf{D_{\mathfrak r}^\chi}[2|\mathfrak r|]^i\otimes E_{\mathfrak r}
\to (\brcOf{D_{\mathfrak r}^\chi}[2|\mathfrak r|]^{i+1}\otimes E_{\mathfrak r})\oplus\bigoplus_{b\in c^\sharp(D)\setminus\mathfrak r}\brcOf{D_{\mathfrak r\cup\{b\}}^\chi}[2|\mathfrak r|+2]^{i+1}\otimes E_{\mathfrak r\cup\{b\}}
\end{multlined}
\]
for each resolution scheme $\mathfrak r$.

We see that the complex $\brcOf{D}$ is also realized as an iterated mapping cone.
For this, fix a double point $b\in c^\sharp(D)$ and let $D^{(+)}$ and $D^{(-)}$ be diagrams obtained from $D$ by resolving $b$ into positive and negative crossings respectively.
The set $c^\sharp(D_\pm)$ is then identified with $c^\sharp(D)\setminus\{b\}$ so that, for each resolution scheme $\mathfrak r\subset c^\sharp(D)\setminus\{b\}$, we have $D^{(-)}_{\mathfrak r}=D_{\mathfrak r}$ and $D^{(+)}_{\mathfrak r}=D_{\mathfrak r\cup\{b\}}$.
Under these identifications, one can see that the inclusion and the projection yield the following exact sequence of morphisms of chain complexes in $\Cob(\partial^-D^\chi,\partial^+D^\chi)$:
\begin{equation}
\label{eq:singbrc-incproj}
\brcOf{D^{(+),\chi}}[2]\xrightarrow{b_\dagger}\brcOf{D^\chi}\twoheadrightarrow \brcOf{D^{(-),\chi}}
\quad.
\end{equation}
On the other hand, for each $\mathfrak r\subset c^\sharp(D)\setminus\{b\}$, we have genus-one morphism $\brcOf{D^{(-),\chi}_{\mathfrak r}}\to\brcOf{D^{(+),\chi}_{\mathfrak r}}[1]$.
As $\mathfrak r$ varies in resolution schemes on $D^{(-)}$, it turns out that it defines a morphism
\begin{equation}
\label{eq:singular-genus-one}
\widehat\Phi:\brcOf{D^{(-),\chi}}\to\brcOf*{D^{(+),\chi}}[1]
\end{equation}
which we again call \emph{genus-one morphism}.

\begin{proposition}
\label{prop:vassiliev-cone}
In the situation above, there is an isomorphism
\begin{equation}
\label{eq:singbrc-cone}
\brcOf{D^\chi}
\cong \operatorname{Cone}\left(
\brcOf{D^{(-),\chi}}\xrightarrow{\widehat\Phi}\brcOf{D^{(+),\chi}}[1]
\right)[1]
\end{equation}
in the category $\mathbf{Ch}^{\mathsf b}(\Cob(\partial^-D^\chi,\partial^+D^\chi))$ so that the associated exact sequence is exactly \eqref{eq:singbrc-incproj}.
\end{proposition}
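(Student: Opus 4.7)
The plan is to construct the isomorphism directly by splitting the defining direct sum of $\brcOf{D^\chi}$ according to whether the fixed double point $b$ belongs to the resolution scheme $\mathfrak{r}$ or not. Write
\[
\brcOf{D^\chi}
= \bigoplus_{\mathfrak r\subset c^\sharp(D),\,b\notin\mathfrak r}\brcOf{D^\chi_{\mathfrak r}}[2|\mathfrak r|]\otimes E_{\mathfrak r}
\;\oplus\;
\bigoplus_{\mathfrak r\subset c^\sharp(D),\,b\in\mathfrak r}\brcOf{D^\chi_{\mathfrak r}}[2|\mathfrak r|]\otimes E_{\mathfrak r}
\quad.
\]
On the first summand the identifications $c^\sharp(D^{(-)})=c^\sharp(D)\setminus\{b\}$ and $D^{(-)}_{\mathfrak r}=D_{\mathfrak r}$ (valid since $b\notin\mathfrak r$) realize it as $\brcOf{D^{(-),\chi}}$. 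On the second summand, reindex by $\mathfrak r=\mathfrak r'\cup\{b\}$ with $\mathfrak r'\subset c^\sharp(D^{(+)})$; then $D_{\mathfrak r}=D^{(+)}_{\mathfrak r'}$ and $|\mathfrak r|=|\mathfrak r'|+1$, so the summand is identified with $\brcOf{D^{(+),\chi}}[2]$, the copy of $E_{\mathfrak r'\cup\{b\}}$ in each term being absorbed into the sign map $b_\dagger$.

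Next, I would compare the differential of $\brcOf{D^\chi}$, which by definition consists of an internal piece $d_{D_{\mathfrak r}}\otimes\mathrm{id}$ on each summand together with cross terms $\widehat\Phi_{\mathfrak r,c}\otimes(\wedge c)$ for $c\in c^\sharp(D)\setminus\mathfrak r$, against that of the shifted cone. In the decomposition above, the internal differentials restrict to the differentials of $\brcOf{D^{(-),\chi}}$ and of $\brcOf{D^{(+),\chi}}[2]$ respectively. The only cross terms connecting the two summands are those with $c=b$, and collecting these over all $\mathfrak r\not\ni b$ recovers exactly the morphism $\widehat\Phi$ of \eqref{eq:singular-genus-one} (tensored with $b_\dagger$). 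The remaining cross terms $c\neq b$ stay within each of the two summands, because replacing $\mathfrak r\mapsto\mathfrak r\cup\{c\}$ does not change whether $b\in\mathfrak r$; these are precisely the genus-one differentials on $\brcOf{D^{(\pm),\chi}}$. Matching this against Definition~\ref{DefMappingCone} and \cref{NotationShift} shows that the assembled complex is $\operatorname{Cone}(\widehat\Phi)[1]$.

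Finally, the inclusion $b_\dagger:\brcOf{D^{(+),\chi}}[2]\hookrightarrow\brcOf{D^\chi}$ and the projection $\brcOf{D^\chi}\twoheadrightarrow\brcOf{D^{(-),\chi}}$ appearing in \eqref{eq:singbrc-incproj} are, under the identification above, exactly the canonical inclusion of $Y[1]$ into $\operatorname{Cone}(f)[1]$ and the canonical projection of $\operatorname{Cone}(f)[1]$ onto $X$, with $f=\widehat\Phi$, $X=\brcOf{D^{(-),\chi}}$, and $Y=\brcOf{D^{(+),\chi}}[1]$. This completes the identification of the short exact sequence as well.

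The main obstacle I expect is bookkeeping the signs: the shift $[2]$ appearing in the $b\in\mathfrak r$ summand has to be reconciled with the single $[1]$-shift on the cone, and the sign $(-1)^k$ of $d_{W[k]}$ in \cref{NotationShift} interacts with the signs produced by the wedge $(\wedge b)$ (via \eqref{SymbolLeftWedge}, \eqref{SymbolRightWedge}) as $\mathfrak r$ varies. One has to choose the identification on the $b\in\mathfrak r$ piece (i.e.\ the placement of $b$ in the ordered set $\mathfrak r'\cup\{b\}$ and the accompanying Koszul sign) so that the cross-term $\widehat\Phi\otimes(\wedge b)$ is delivered with precisely the sign $+\widehat\Phi$ required by the $(1,2)$-entry of the cone-matrix after the global $[1]$-shift; once this is done, the remaining verifications are straightforward.
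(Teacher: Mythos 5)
Your proposal follows exactly the route the paper has in mind: the paper states that the proof is ``almost identical to \cref{prop:coneVSbracket} and so omitted,'' and the proof of \cref{prop:coneVSbracket} is precisely the argument you give — split the defining direct sum according to whether the distinguished crossing (here, double point $b$) belongs to the indexing set, identify each half with the corresponding smaller complex, and match differentials. Your identification of the two summands with $\brcOf{D^{(-),\chi}}$ and $\brcOf{D^{(+),\chi}}[2]$ and of the maps in \eqref{eq:singbrc-incproj} with the canonical cone inclusion/projection is correct.

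One small caution on the sign bookkeeping you flag at the end: with the conventions of \cref{DefMappingCone} and \cref{NotationShift}, the $(1,2)$-entry of $d_{\operatorname{Cone}(\widehat\Phi)[1]}$ works out to $-\widehat\Phi$, not $+\widehat\Phi$ — after the $[1]$-shift the differential is negated, which flips the off-diagonal entry $f$ to $-f$ while the $Y$-block $-d_{Y[1]} = d_{Y[2]}$ regains the correct sign. (This is also why \cref{prop:coneVSbracket} carries the morphism $-\delta_c$ rather than $\delta_c$.) The needed minus sign, together with the $(-1)^{|\mathfrak r|}$ discrepancy between $(\wedge b)$ and $b_\dagger=(b\wedge)$ that you correctly anticipate, is what the ``accompanying Koszul sign'' must absorb. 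This is indeed only bookkeeping and does not affect the validity of the approach, but the target sign you should be aiming for is $-\widehat\Phi$ rather than $+\widehat\Phi$.
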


The proof is almost identical to \cref{prop:coneVSbracket} and so omitted.

We now extend the universal Khovanov complex to singular tangles diagrams by normalizing the degree of the universal bracket complex.
The argument is almost parallel to the case of ordinary tangles.

\begin{definition}
Let $D$ be a singular tangle diagram with $n_-$ negative crossings and $n_\times$ double points.
We write $\chi_w$ the checkerboard coloring with negatively unbounded white component.
Then, we set
\begin{equation}
\label{eq:singKh-def}
\KhOf*{D}\coloneqq \brcOf*{D^{\chi_w}}[-n_- - 2n_\times]
\in\Cob(\partial^-D^{\chi_w},\partial^+D^{\chi_w})
\end{equation}
and call it the \emph{universal Khovanov complex} of $D$.
\end{definition}

It is obvious that, if $D$ has no double point, the definition agrees with the ordinary universal Khovanov complex.
In \cref{sec:UKHsing:inv}, we see that $\KhOf*{\blank}$ defines an invariant of singular tangles up to chain homotopies.

\begin{corollary}
\label{cor:vassiliev-cone-UKH}
In the same situation as \cref{prop:vassiliev-cone}, there is an isomorphism
\[
\KhOf{D}
\cong \operatorname{Cone}\left(
\KhOf{D^{(-)}} \xrightarrow{\widehat\Phi}\KhOf{D^{(+)}}
\right)
\quad.
\]
\end{corollary}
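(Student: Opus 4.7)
The plan is to deduce \cref{cor:vassiliev-cone-UKH} from \cref{prop:vassiliev-cone} by applying the appropriate degree shifts and carefully tracking how the invariants $n_-$ and $n_\times$ behave under the resolution of the chosen double point $b$. Since by definition $\KhOf{D}=\brcOf{D^{\chi_w}}[-n_--2n_\times]$, the strategy is to shift the isomorphism of \cref{prop:vassiliev-cone} and identify the resulting shifted cone with the cone of the genus-one morphism between the Khovanov complexes.

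First I would count crossings. If $D$ has $n_-$ negative crossings and $n_\times$ double points, then $D^{(-)}$ has $n_-+1$ negative crossings and $n_\times-1$ double points, while $D^{(+)}$ has $n_-$ negative crossings and $n_\times-1$ double points. Hence
\begin{gather*}
\KhOf{D^{(-)}}=\brcOf{{D^{(-),\chi_w}}}[-n_--2n_\times+1],\\
\KhOf{D^{(+)}}=\brcOf{{D^{(+),\chi_w}}}[-n_--2n_\times+2].
\end{gather*}
Next I would invoke \cref{prop:vassiliev-cone} to get $\brcOf{D^{\chi_w}}\cong\operatorname{Cone}(\widehat\Phi)[1]$ where $\widehat\Phi:\brcOf{D^{(-),\chi_w}}\to\brcOf{D^{(+),\chi_w}}[1]$, and apply the global shift $[-n_--2n_\times]$ to both sides, producing
\[
\KhOf{D}\cong\operatorname{Cone}(\widehat\Phi)[-n_--2n_\times+1].
\]

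The remaining step is to commute the shift through the cone. For a chain map $f:X\to Y$ and an integer $k$, direct inspection of \cref{DefMappingCone} shows that $\operatorname{Cone}(f)[k]$ is canonically isomorphic to $\operatorname{Cone}(f[k]:X[k]\to Y[k])$, where $f[k]$ denotes the map obtained by re-indexing $f$ (the sign $(-1)^k$ in \cref{NotationShift} on the differentials of $X[k]$ and $Y[k]$ cancels, while the off-diagonal entry $f$ remains correct). Applying this with $k=-n_--2n_\times+1$ to the chain map $\widehat\Phi:\brcOf{D^{(-),\chi_w}}\to\brcOf{D^{(+),\chi_w}}[1]$ yields
\[
\operatorname{Cone}(\widehat\Phi)[-n_--2n_\times+1]\cong\operatorname{Cone}\!\left(\KhOf{D^{(-)}}\xrightarrow{\widehat\Phi}\KhOf{D^{(+)}}\right),
\]
since the target complex $\brcOf{D^{(+),\chi_w}}[1]$ shifted by $[-n_--2n_\times+1]$ equals $\brcOf{D^{(+),\chi_w}}[-n_--2n_\times+2]=\KhOf{D^{(+)}}$.

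No step is really an obstacle; the only bookkeeping that needs care is the accounting of $n_-$ and $n_\times$ between $D$, $D^{(-)}$ and $D^{(+)}$, and the verification that the map induced on the shifted complexes by $\widehat\Phi$ is (up to the canonical identification of shifts with the cone) the genus-one morphism \eqref{eq:singular-genus-one} on the universal Khovanov complexes. Both are immediate from the definitions, so the proof reduces to combining \cref{prop:vassiliev-cone} with the degree shift, and in practice the argument can be given in just a few lines.
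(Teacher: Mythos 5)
Your proof is correct and is precisely the bookkeeping argument that \cref{cor:vassiliev-cone-UKH} is intended to follow from; the paper gives no separate proof for the corollary, treating it as immediate from \cref{prop:vassiliev-cone} plus the definition \eqref{eq:singKh-def}, which is exactly what you carry out. The crossing/double-point accounting ($D^{(-)}$ gains one negative crossing and loses one double point, $D^{(+)}$ only loses the double point) and the identification of the resulting shifts with $\KhOf{D^{(-)}}$ and $\KhOf{D^{(+)}}$ are right.

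One small imprecision: you claim that when one commutes $[k]$ past $\operatorname{Cone}$, ``the sign $(-1)^k$ cancels, while the off-diagonal entry $f$ remains correct.'' Direct inspection with \cref{NotationShift} and \cref{DefMappingCone} actually gives differential $\begin{psmallmatrix}(-1)^k d_Y & (-1)^k f \\ 0 & -(-1)^k d_X\end{psmallmatrix}$ for $\operatorname{Cone}(f)[k]$ versus $\begin{psmallmatrix}(-1)^k d_Y & f \\ 0 & -(-1)^k d_X\end{psmallmatrix}$ for $\operatorname{Cone}(f[k])$, so the off-diagonal entries disagree by $(-1)^k$; the isomorphism exists but is the diagonal map $\operatorname{diag}(1,(-1)^k)$ (or $\operatorname{diag}((-1)^k,1)$) rather than the identity. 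This does not affect the conclusion — an isomorphism $\operatorname{Cone}(f)[k]\cong\operatorname{Cone}(f[k])$ certainly exists — but ``the off-diagonal entry remains correct on the nose'' is not accurate with the paper's sign conventions, and you should state the isomorphism explicitly rather than implying it is the identity.
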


We may think of \cref{cor:vassiliev-cone-UKH} as a categorified analogue of \emph{Vassiliev skein relation}.
Namely, it gives rise to a distinguished triangle
\begin{equation}
\label{eq:Kh-Vassiliev-triangle}
\cdots
\to \KhOf*{\diagCrossNegUp}
\xrightarrow{\widehat\Phi}\KhOf*{\diagCrossPosUp}
\to \KhOf*{\diagSingUp}
\to \KhOf*{\diagCrossNegUp}[-1]
\xrightarrow{\widehat\Phi} \cdots
\end{equation}
in the homotopy category (with the standard triangulated structure).
In fact, if we write $\lbrack\blank\rbrack$ the image of the universal Khovanov complex $\KhOf{\blank}$ in the $K$-group, \eqref{eq:Kh-Vassiliev-triangle} yields the equation
\[
\left\lbrack\diagSingUp\right\rbrack
= \left\lbrack\diagCrossPosUp\right\rbrack
- \left\lbrack\diagCrossNegUp\right\rbrack
\quad,
\]
which is exactly the Vassiliev skein relation.

\begin{example}
Evaluating the sequence \eqref{eq:Kh-Vassiliev-triangle} with the Euler-graded TQFT associated with the Frobenius algebra $k[x]/(x^2)$, we obtain the following long exact sequence of Khovanov homologies with coefficients in $k$:
\[
\begin{tikzcd}[column sep=.9em]
\cdots \ar[r] & \operatorname{Kh}^{i,j}\left(\diagCrossNegUp\;;k\right) \ar[r,"\widehat\Phi_\ast"] & \operatorname{Kh}^{i,j}\left(\diagCrossPosUp\;;k\right) \ar[r] & \operatorname{Kh}^{i,j}\left(\diagSingUp\;;k\right) \ar[dl,out=-20,in=160] & \\
&& \operatorname{Kh}^{i+1,j}\left(\diagCrossNegUp\;;k\right) \ar[r,"\widehat\Phi_\ast"] & \operatorname{Kh}^{i+1,j}\left(\diagCrossPosUp\;;k\right) \ar[r] &
\cdots\quad.
\end{tikzcd}
\]
In case $k$ is a field, one can recover the Vassiliev skein relation for the Jones polynomial by taking the graded Euler characteristics.
\end{example}

\subsection{Composition formulas}
\label{sec:UKHsing:composition}

As seen in \cref{prop:TangleOtimes} and \cref{prop:TangleAst}, the universal bracket complexes behave well for compositions and tensor products of tangle diagrams.
Actually, there are analogous isomorphisms for singular tangle diagrams.
We define compositions and tensor products of singular tangle diagrams in the same manner as the non-singular case.

\begin{theorem}
\label{theo:singtang-comp}
Let $D$ be the composition of two singular tangle diagrams, say $D'$ and $D''$.
For a checkerboard coloring $\chi$ on $(\mathbb R\times[0,1])\setminus D$, let us write $\chi'$ and $\chi''$ respectively the induced coloring on the complements of $D'$ and $D''$.
Then, there is an isomorphism
\begin{equation}
\label{eq:singtang-comp}
\brcOf{D^\chi}\cong\brcOf{D'^{\chi'}}\ast\brcOf{D''^{\chi''}}
\end{equation}
in the category $\mathbf{Ch}^{\mathsf b}(\Cob(\partial^-D^\chi,\partial^+D^\chi))=\mathbf{Ch}^{\mathsf b}(\Cob(\partial^-{D'}^{\chi'},\partial^+{D''}^{\chi''}))$.
\end{theorem}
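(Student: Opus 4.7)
The plan is to reduce the claim to the non-singular case (\cref{prop:TangleAst}) via induction on the total number of double points $|c^\sharp(D)|=|c^\sharp(D')|+|c^\sharp(D'')|$, using the iterated mapping-cone description of \cref{prop:vassiliev-cone}. The base case, in which both $D'$ and $D''$ are ordinary tangle diagrams, is precisely \cref{prop:TangleAst}. For the inductive step, I would choose a double point $b\in c^\sharp(D)$; by symmetry, assume $b\in c^\sharp(D')$. Let $D'^{(\pm)}$ denote the two tangle diagrams obtained from $D'$ by resolving $b$ positively/negatively, and let $D^{(\pm)}$ be the corresponding resolutions of $D$; note that $D^{(\pm)}=D''\circ D'^{(\pm)}$ by construction.

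By \cref{prop:vassiliev-cone}, there are isomorphisms
\[
\brcOf*{D^\chi}\cong\operatorname{Cone}\bigl(\widehat\Phi:\brcOf*{D^{(-),\chi}}\to\brcOf*{D^{(+),\chi}}[1]\bigr)[1]
\]
and the analogous one for $D'^\chi$ in place of $D^\chi$. By the induction hypothesis applied to each of the two composite diagrams $D^{(\pm)}=D''\circ D'^{(\pm)}$ (which have one fewer double point in $D'$), one obtains isomorphisms
\[
\brcOf*{D^{(\pm),\chi}}\cong\brcOf*{D'^{(\pm),\chi'}}\ast\brcOf*{D''^{\chi''}}\quad.
\]
Now, because the horizontal composition $\ast$ of \cref{eq:Kom-fun:hcomp} is $k$-bilinear and additive on each variable, it commutes with mapping cones in the sense that, for a chain map $f:X\to Y$ and any complex $Z$, there is a canonical identification $\operatorname{Cone}(f\ast\mathrm{id}_Z)\cong\operatorname{Cone}(f)\ast Z$. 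Combining these isomorphisms will yield the required identification of $\brcOf*{D^\chi}$ with $\brcOf*{D'^{\chi'}}\ast\brcOf*{D''^{\chi''}}$, provided one verifies that the chain map inside the cone agrees, under the inductive identification, with $\widehat\Phi\ast\mathrm{id}$.

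The main obstacle is therefore the naturality of the genus-one morphism with respect to composition of tangles: one must prove that the square
\[
\begin{tikzcd}[column sep=4em]
\brcOf*{D^{(-),\chi}} \ar[r,"\widehat\Phi_b"] \ar[d,"\cong"'] & \brcOf*{D^{(+),\chi}}[1] \ar[d,"\cong"] \\
\brcOf*{D'^{(-),\chi'}}\ast\brcOf*{D''^{\chi''}} \ar[r,"\widehat\Phi_b\ast\mathrm{id}"'] & \brcOf*{D'^{(+),\chi'}}[1]\ast\brcOf*{D''^{\chi''}}
\end{tikzcd}
\]
commutes, where the vertical identifications are those of \cref{prop:TangleAst}. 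This is a strictly local statement: the cobordism $\BordPhiFst-\BordPhiSnd$ defining $\widehat\Phi$ (\cref{eq:Phi}), together with the smoothing cobordisms producing the differentials, is supported in a neighborhood of the crossing $b$, while the identification of \cref{prop:TangleAst} is obtained by gluing cobordisms along the horizontal line separating $D'$ from $D''$. Since $b$ lies strictly in the $D'$-half, the saddle at $b$ and the $1$-handle that realize $\widehat\Phi_b$ commute with the gluing, and the sign factors arising from the ordering of crossings split as a product over $c(D')$ and $c(D'')$.

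With this naturality in hand, the inductive step is immediate: apply $\brcOf{D'^{\chi'}}\mapsto \brcOf{D'^{\chi'}}\ast\brcOf*{D''^{\chi''}}$ to the cone presentation of $\brcOf*{D'^\chi}$, use the inductive hypothesis on each vertex, and then invoke \cref{prop:vassiliev-cone} in reverse to recover $\brcOf*{D^\chi}$ on the left. The only genuine checking is the sign bookkeeping for the shift $[2|\mathfrak r|]$ and the wedge operations $(\wedge b)$ when $b$ is moved past the crossings of $D''$; this amounts to an elementary computation on the totally ordered set of all singular and ordinary crossings, and is entirely routine.
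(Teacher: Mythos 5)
Your proof is correct, but it organizes the argument quite differently from the paper. You set up an induction on the number of double points, peeling them off one at a time via the mapping-cone description of \cref{prop:vassiliev-cone}, and reduce each step to a naturality square for $\widehat\Phi$ under horizontal composition plus the (standard) compatibility $\operatorname{Cone}(f\ast\mathrm{id}_Z)\cong\operatorname{Cone}(f)\ast Z$ for a biadditive functor. The paper instead constructs the isomorphism directly on the full graded object: it identifies each summand $\brcOf*{D^\chi_{\mathfrak r}}$ with $\brcOf*{D'^{\chi'}_{\mathfrak r'}}\ast\brcOf*{D''^{\chi''}_{\mathfrak r''}}$ via \cref{prop:TangleAst}, introduces an explicit sign $(-1)^{iq}$ to reconcile the shifts $[2|\mathfrak r|]$, and then checks in one pass that the differential on $\brcOf*{D^\chi}$ corresponds under this identification to the one on $\brcOf*{D'^{\chi'}}\ast\brcOf*{D''^{\chi''}}$. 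The essential content is the same in both approaches — namely, \cref{prop:TangleAst} for the resolved diagrams and the locality formula $\widehat\Phi_{\mathfrak r,b}=\mathrm{id}\ast\widehat\Phi_{\mathfrak r',b}$ or $\widehat\Phi_{\mathfrak r'',b}\ast\mathrm{id}$ depending on which factor $b$ lives in — and the sign bookkeeping is deferred in roughly the same spirit. Your inductive route buys a cleaner conceptual split (one cone at a time, one naturality square to check) at the cost of invoking the cone-versus-$\ast$ compatibility, while the paper's direct construction avoids that and records the shift-sign $(-1)^{iq}$ explicitly, which is where the "routine bookkeeping" you defer actually lives. Two small points worth tightening: "by symmetry" for the case $b\in c^\sharp(D'')$ should be "by a parallel argument," since $\ast$ is composition and not symmetric (you would need $\mathrm{id}\ast\widehat\Phi$ and the cone-compatibility in the other variable); and when you invoke the naturality square, it is worth noting that it holds because $\widehat\Phi$ is given by an \emph{explicit} cobordism supported near $b$ tensored with $\cra{b}$ (\cref{prop:delta-Phi}), not merely because it arises from the factorization lemma — the latter alone would not obviously commute with gluing.
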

\begin{proof}
We may identify the double points in $D'$ and $D''$ with those in $D$; in other words, $c^\sharp(D)=c^\sharp(D')\amalg c^\sharp(D'')$.
For a resolution scheme $\mathfrak r\subset c^\sharp(D)$, we write $\mathfrak r'\coloneqq\mathfrak r\cap c^\sharp(D')$ and $\mathfrak r''\coloneqq\mathfrak r\cap c^\sharp(D'')$.
Then, by \cref{prop:TangleAst} implies that there is an isomorphism
\begin{equation}
\label{eq:prf:singtang-comp:resolcomp}
\brcOf*{D^\chi_{\mathfrak r}}
\cong \brcOf{D''^{\chi''}_{\mathfrak r''}}\ast\brcOf{D'^{\chi'}_{\mathfrak r'}}
\quad.
\end{equation}
In addition, for any pair of integers $(p,q)$, the morphism
\[
(-1)^{iq}\ast\mathrm{id}:
\brcOf{D''^{\chi''}_{\mathfrak r''}}^{i-p}\ast\brcOf{D'^{\chi'}_{\mathfrak r'}}^{j-q}
\to \brcOf{D''^{\chi''}_{\mathfrak r''}}^{i-p}\ast\brcOf{D'^{\chi'}_{\mathfrak r'}}^{j-q}
\]
defines an isomorphism
\begin{equation}
\label{eq:prf:singtang-comp:shift}
\brcOf{D''^{\chi''}_{\mathfrak r''}}[p]\ast\brcOf{D'^{\chi'}_{\mathfrak r'}}[q]
\cong (\brcOf{D''^{\chi''}_{\mathfrak r''}}\ast\brcOf{D'^{\chi'}_{\mathfrak r'}})[p+q]
\quad.
\end{equation}
Thanks to the identifications \eqref{eq:prf:singtang-comp:resolcomp} and \eqref{eq:prf:singtang-comp:shift}, we obtain \eqref{eq:singtang-comp} as an isomorphism of graded objects in $\Cob(\partial^-D^\chi,\partial^+D^\chi)$.
Furthermore, for each $b\in c^\sharp(D)\setminus\mathfrak r$, the genus-one morphism $\widehat\Phi_{\mathfrak r,b}:\brcOf{D^\chi_{\mathfrak r}}\to\brcOf{D^\chi_{\mathfrak r\cup\{b\}}}[1]$ with respect to $b$ is given by
\[
\widehat\Phi_{\mathfrak r,b}=
\begin{cases}
\mathrm{id}\ast\widehat\Phi_{\mathfrak r',b}:\brcOf{D''^{\chi''}_{\mathfrak r''}}\ast\brcOf{D'^{\chi'}_{\mathfrak r'}}\to \brcOf{D''^{\chi''}_{\mathfrak r''}}\ast\brcOf{D'^{\chi'}_{\mathfrak r'\cup\{b\}}}[1] & b\in c^\sharp(D')\quad, \\[1ex]
\widehat\Phi_{\mathfrak r'',b}\ast\mathrm{id}:\brcOf{D''^{\chi''}_{\mathfrak r''}}\ast\brcOf{D'^{\chi'}_{\mathfrak r'}}\to \brcOf{D''^{\chi''}_{\mathfrak r''\cup\{b\}}}[1]\ast\brcOf{D'^{\chi'}_{\mathfrak r'}}& b\in c^\sharp(D'')\quad.
\end{cases}
\]
Using this formula, one can easily verify that the differentials on the complexes in \eqref{eq:singtang-comp} agree with each other.
Hence, the result follows.
\end{proof}

The same argument also shows the following.

\begin{theorem}
\label{theo:singtang-disj}
Let $D$ be the tensor product of two singular tangle diagrams, say $D'$ and $D''$.
For a checkerboard coloring $\chi$ on $(\mathbb R\times[0,1])\setminus D$, we write $\chi'$ and $\chi''$ respectively the induced coloring on the complements of $D'$ and $D''$.
Then, there is an isomorphism
\[
\brcOf{D^\chi}
\cong \brcOf{D'^{\chi'}}\otimes\brcOf{D''^{\chi''}}
\]
in the category $\mathbf{Ch}^{\mathsf b}(\Cob(\partial^-D^\chi,\partial^+D^\chi))=\mathbf{Ch}^{\mathsf b}(\Cob(\partial^-D'^{\chi'}\otimes\partial^-D''^{\chi''},\partial^+D'^{\chi'}\otimes\partial^+D''^{\chi''}))$.
\end{theorem}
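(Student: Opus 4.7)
The plan is to mirror the proof of Theorem \ref{theo:singtang-comp} almost verbatim, replacing every occurrence of the composition functor $\ast$ with the disjoint union functor $\otimes$ and invoking Proposition \ref{prop:TangleOtimes} in place of Proposition \ref{prop:TangleAst}. Because the tensor product functor on chain complexes already enters with Koszul signs, no extra auxiliary sign isomorphism is required beyond what is built into the definition of $\otimes$ on $\mathbf{Ch}^{\mathsf b}(\Cob(-,-))$ from \eqref{eq:Kom-fun:disj}.

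First, since $D$ is the horizontal juxtaposition of $D'$ and $D''$, the sets of double points decompose canonically as $c^\sharp(D) = c^\sharp(D') \amalg c^\sharp(D'')$. Any resolution scheme $\mathfrak r \subset c^\sharp(D)$ accordingly splits as $\mathfrak r = \mathfrak r' \amalg \mathfrak r''$ with $\mathfrak r' \coloneqq \mathfrak r \cap c^\sharp(D')$ and $\mathfrak r'' \coloneqq \mathfrak r \cap c^\sharp(D'')$, and $D_\mathfrak r$ is the tensor product of $D'_{\mathfrak r'}$ and $D''_{\mathfrak r''}$. Applying Proposition \ref{prop:TangleOtimes} to each resolution yields an isomorphism
\[
\brcOf{D^\chi_{\mathfrak r}} \cong \brcOf{{D'}^{\chi'}_{\mathfrak r'}} \otimes \brcOf{{D''}^{\chi''}_{\mathfrak r''}}
\]
in $\mathbf{Ch}^{\mathsf b}(\Cob(\partial^-D^\chi, \partial^+D^\chi))$.

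Second, assemble the pieces. Using $|\mathfrak r| = |\mathfrak r'| + |\mathfrak r''|$, the canonical identification $E_{\mathfrak r} \cong E_{\mathfrak r'} \otimes E_{\mathfrak r''}$, and the standard fact that a shift $[2|\mathfrak r'|+2|\mathfrak r''|]$ on a tensor product of complexes corresponds to the tensor product of the individual shifts $[2|\mathfrak r'|]$ and $[2|\mathfrak r''|]$, one obtains a degree-preserving isomorphism between $\brcOf{D^\chi}$ and $\brcOf{{D'}^{\chi'}} \otimes \brcOf{{D''}^{\chi''}}$ as graded objects of $\Cob(\partial^-D^\chi, \partial^+D^\chi)$.

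Third, check that differentials agree. The differential on $\brcOf{D^\chi}$ is the sum of the internal differentials $d_{D_\mathfrak r}$ on each resolved diagram together with the genus-one contributions $\widehat\Phi_{\mathfrak r, b}$ for $b \in c^\sharp(D)\setminus\mathfrak r$. Because both ingredients are supported in a neighborhood of a single crossing or double point, and the two factors $D'$ and $D''$ occupy disjoint regions in $\mathbb R \times [0,1]$, one has
\[
\widehat\Phi_{\mathfrak r, b} = \begin{cases} \widehat\Phi_{\mathfrak r', b} \otimes \mathrm{id} & b \in c^\sharp(D'), \\ \mathrm{id} \otimes \widehat\Phi_{\mathfrak r'', b} & b \in c^\sharp(D''), \end{cases}
\]
with a Koszul sign $(-1)^p$ in the second case coming from the extension of bilinear functors to chain complexes recalled before \eqref{eq:Kom-fun:hcomp}. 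This exactly matches the differential on the tensor product complex, completing the proof.

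The main obstacle, as in Theorem \ref{theo:singtang-comp}, is bookkeeping: one must check that the Koszul signs produced by the tensor product of shifted complexes are precisely those that appear in the differential on $\brcOf{D^\chi}$, and that the wedge morphisms $(\wedge b)$ acting on $E_{\mathfrak r'} \otimes E_{\mathfrak r''}$ induce the correct morphism on $E_{\mathfrak r}$ under the identification above. This is routine but requires care.
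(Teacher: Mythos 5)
Your proposal is correct and takes exactly the approach the paper intends: the paper's entire ``proof'' of \cref{theo:singtang-disj} is the remark ``The same argument also shows the following,'' referring to \cref{theo:singtang-comp}, and you have carried out precisely that transfer (replacing $\ast$ with $\otimes$ and \cref{prop:TangleAst} with \cref{prop:TangleOtimes}), filling in the details the paper omits. One small point worth flagging: your opening claim that ``no extra auxiliary sign isomorphism is required'' is slightly at odds with your own second step, where you do invoke the shift isomorphism $X[p]\otimes Y[q]\cong(X\otimes Y)[p+q]$; this isomorphism carries signs in general (compare the morphism $(-1)^{iq}\ast\mathrm{id}$ in the paper's proof of the composition case), and the reason it plays no visible role here is not that the Koszul convention absorbs it but that the shifts $2|\mathfrak r'|$ and $2|\mathfrak r''|$ are even, so the sign $(-1)^{iq}$ is identically $1$. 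With that clarification the bookkeeping closes cleanly.
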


\subsection{Invariance}
\label{sec:UKHsing:inv}

Using the results obtained in \cref{sec:UKHsing:composition}, we now prove that the extended universal Khovanov complex yields an invariant for singular tangles.
To be more precise, by an (oriented) \emph{singular tangle}, we mean a compact oriented immersed submanifold $T\subset\mathbb R^2\times[0,1]$ of dimension $1$ such that
\begin{enumerate}[label=\upshape(\alph*)]
  \item it has only finitely many transverse double points as singularities; and
  \item it is a neat submanifold near the boundary $\mathbb R^2\times\{0,1\}$.
\end{enumerate}
We always assume a singular tangle $T$ to be ``generic'' so that the image of $T$ under the projection $\mathbb R^2\times[0,1]\to\mathbb R\times[0,1]$ defines a singular tangle diagram $D$; in this case, we call $D$ \emph{the} diagram of $T$.
We also consider isotopies between them in the same way as the non-singular case (see \cref{sec:univ-Kh:univKh}).

\begin{theorem}
\label{theo:Khsing-inv}
The assignment $D\mapsto\KhOf{D}$ is invariant under the local moves \eqref{eq:moveSing} in addition to the Reidemeister moves up to chain homotopy equivalences.
Consequently, it defines an ambient isotopy invariant for singular tangles.
\end{theorem}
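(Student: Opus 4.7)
The strategy is to verify invariance under each generating move separately, using the composition and tensor formulas (\cref{theo:singtang-comp,theo:singtang-disj}) to localize each move to its supporting region.

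For the Reidemeister moves, which affect only ordinary crossings, the move region contains no double points of $D$. Cutting $D$ along horizontal lines bounding the move region, one obtains a composition $D = E_1 \ast E \ast E_2$ with all double points of $D$ inside $E_1$ or $E_2$, and similarly $D' = E_1 \ast E' \ast E_2$. By \cref{theo:singtang-comp},
\[
\KhOf{D} \cong \KhOf{E_1}\ast\KhOf{E}\ast\KhOf{E_2}
\ ,\quad
\KhOf{D'} \cong \KhOf{E_1}\ast\KhOf{E'}\ast\KhOf{E_2}
\quad.
\]
Since $E$ and $E'$ are ordinary non-singular tangle diagrams, \cref{thm:Bar-Natan} supplies a chain homotopy equivalence $\KhOf{E}\simeq\KhOf{E'}$, and the $k$-bilinearity of the composition functor \eqref{eq:Kom-fun:hcomp} yields $\KhOf{D}\simeq\KhOf{D'}$.

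For each of the three singular moves in \eqref{eq:moveSing}, I again localize via composition to reduce to the tangle pair defining the move. Let $b$ denote the distinguished double point appearing in the move. Applying \cref{cor:vassiliev-cone-UKH} to $b$ yields
\[
\KhOf{D}\cong\operatorname{Cone}\!\bigl(\widehat\Phi:\KhOf{D^{(-)}}\to\KhOf{D^{(+)}}\bigr)
\]
and likewise for $D'$. The resolutions $D^{(\pm)}$ and $D'^{(\pm)}$ are non-singular tangle diagrams related precisely by the crossing-change configurations appearing in \cref{theo:genus1-inv}, which furnishes chain-homotopy commutative squares between the corresponding $\widehat\Phi$ morphisms whose vertical arrows are chain homotopy equivalences. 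Invoking \cref{cor:cone-heq} transfers these equivalences to the mapping cones, producing $\KhOf{D}\simeq\KhOf{D'}$.

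The main obstacle lies in verifying that the localization step behaves compatibly with the genus-one morphism: concretely, that under the isomorphism of \cref{theo:singtang-comp}, the global $\widehat\Phi_b$ on $D = E_1\ast E\ast E_2$ agrees, up to the sign and shift conventions of the composition functor on chain complexes, with the composition of $\widehat\Phi_b$ on the factor containing $b$ with identities on the others. The proof of \cref{theo:singtang-comp} already carries out the analogous compatibility for differentials, but the extra $\widehat\Phi$-structure arising from unresolved double points in $E_1$ and $E_2$ requires careful tracking to ensure that the mapping cone decompositions on both sides of the move align, so that \cref{cor:cone-heq} may be applied as stated. Once this bookkeeping is handled, the two cases above combine to give invariance under every generating move, and hence under ambient isotopy of singular tangles.
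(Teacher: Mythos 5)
Your argument reproduces the paper's own proof in its essentials: reduce to a single elementary move, localize via the composition/tensor formulas for singular tangle diagrams, and then dispatch the Reidemeister moves using Bar-Natan's invariance theorem and the singular moves by passing through the mapping-cone description (\cref{cor:vassiliev-cone-UKH}) together with \cref{theo:genus1-inv} and \cref{cor:cone-heq}.

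Two small remarks. First, to isolate a move supported in a disk you generally need \cref{theo:singtang-disj} (tensor product) in addition to \cref{theo:singtang-comp} (composition), since the move region must be separated both horizontally and vertically from the rest of the diagram; the paper invokes both. Second, the compatibility worry you raise at the end is already discharged: the proof of \cref{theo:singtang-comp} explicitly records the formula $\widehat\Phi_{\mathfrak r,b} = \mathrm{id}\ast\widehat\Phi_{\mathfrak r',b}$ (or $\widehat\Phi_{\mathfrak r'',b}\ast\mathrm{id}$) and checks that the resulting differentials agree, so the genus-one structure on unresolved double points in $E_1,E_2$ is already built into the isomorphism you cite; no further tracking is required. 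Once localized, the local picture contains exactly one double point and its two resolutions are non-singular, so \cref{cor:vassiliev-cone-UKH} and \cref{cor:cone-heq} apply without ambiguity.
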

\begin{proof}
We first show that $\KhOf{\blank}$ defines an isotopy invariant of tangles; in other words, we show that, if $D$ and $D'$ are the diagrams of isotopic singular tangles, then there is a chain homotopy equivalence $\KhOf{D}\simeq\KhOf{D'}$.
Since chain homotopy equivalences compose, we may assume that $D$ and $D'$ are connected by a single elementary move; that is, one of the moves \eqref{eq:moveSing} and Reidemeister moves.
Furthermore, by virtue of \cref{theo:singtang-comp} and \cref{theo:singtang-disj}, we are reduced to the case where $D$ and $D'$ are exactly the local tangles involved with the move.
For Reidemeister moves, the result is nothing but the invariance of the universal Khovanov complexes for ordinary tangles.
On the other hand, for moves \eqref{eq:moveSing}, the result is exactly \cref{theo:genus1-inv}.

Now, the isomorphism in the statement is exactly \cref{cor:vassiliev-cone-UKH}.
To verify the last statement, set $k=\mathbb F_2$ and let $Z:\Cob(\varnothing,\varnothing)\to\mathbf{Mod}_{\mathbb F_2}$ be the $2$-dimensional TQFT associated with the Frobenius algebra $\mathbb F_2[x]/(x^2)$.
One can easily verify that the image $Z(\widehat\Phi)$ of the genus-one morphism coincides with the \emph{genus-one map} introduced in \cite[Section~3.2]{ItoYoshida2019}.
Combining this observation with \cref{cor:vassiliev-cone-UKH}, we can conclude that, for every singular link diagram $D$, the image $Z(\KhOf{D})$ is isomorphic to the complex constructed in \cite[Section~4.2]{ItoYoshida2019}.
This completes the proof.
\end{proof}

\subsection{The FI relation}
\label{sec:UKHsing:FI}

We now prove that our extension of the universal Khovanov complex satisfies a categorified version of the FI relation.
Note that, as seen in \cref{fig:FI-Vassiliev}, the FI relation comes from a comparison between crossing change and two-fold Reidemeister moves of type~\RomNum1.
Thus, one can realize it as a \emph{commutativity} with Reidemeister moves of type~\RomNum1.
We here consider the universal bracket complex for a technical reason.

In order to formulate the commutativity, we recall the morphism on the universal bracket complexes associated with the Reidemeister move of type~\RomNum1.
Namely, according to Bar-Natan~\cite{BarNatan2005}, we have the following chain homotopy equivalences:
\begin{equation}
\label{eq:RI-morcob}
\begin{gathered}
\RMoveBar1^-\coloneqq\BordROneNBar\otimes c^-_\dagger:\brcOf*{\diagFiNil^\chi}\to\brcOf*{\diagFiNegWithLabel^\chi}[-1]
\quad,\\[1ex]
\RMoveBar1^+\coloneqq\BordROnePBarFst\otimes\mathrm{id}-\BordROnePBarSnd\otimes\mathrm{id}:\brcOf*{\diagFiNil^\chi}\to\brcOf*{\diagFiPosWithLabel^\chi}
\quad.
\end{gathered}
\end{equation}
Using these morphisms, we prove the following.

\begin{theorem}
\label{theo:Phi-comm-RI}
Suppose $D$, $D^{(+)}$, and $D^{(-)}$ are singular tangle diagrams which differ only by local pictures as in \cref{fig:RI-pic}.
\begin{figure}[t]
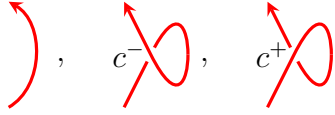

\centering
\[
\diagFiNil
\ ,\quad
\diagFiNegWithLabel
\ ,\quad
\diagFiPosWithLabel
\]
\caption{Local pictures for $D$ (left), $D^{(+)}$ (middle), and $D^{(-)}$ (right)}.
\label{fig:RI-pic}
\end{figure}
Then, for arbitrary checkerboard coloring $\chi$ on the complement of $D$, the following diagram commutes strictly:
\[
\begin{tikzcd}[column sep=.5em]
& \brcOf*{D^\chi} \ar[dl,"\RMoveBar1^-"'] \ar[dr,"\RMoveBar1^+"] & \\
\brcOf*{D^{(-),\chi}}[-1] \ar[rr,"\widehat\Phi"] && \brcOf*{D^{(+),\chi}}
\end{tikzcd}
\quad.
\]
\end{theorem}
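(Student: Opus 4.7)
The plan is a direct check: expand both sides of the desired equation $\RMoveBar1^+ = \widehat\Phi\circ\RMoveBar1^-$ on generators and match them via the explicit cobordism formulas.

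First I would localize. Each of $\RMoveBar1^{\pm}$ and $\widehat\Phi$ is, by definition, a formal sum of cobordisms paired with sign operators that act as the identity outside the small region where $D$, $D^{(-)}$, $D^{(+)}$ differ. Applying \cref{theo:singtang-comp} twice to factor out the common exterior, we may assume without loss of generality that $D=\diagFiNil$, $D^{(-)}=\diagFiNegWithLabel$, $D^{(+)}=\diagFiPosWithLabel$ as literal tangle diagrams. In this toy model $\brcOf*{\diagFiNil^\chi}$ is concentrated in degree $0$ with a single summand, while $\brcOf*{\diagFiNegWithLabel^\chi}$ and $\brcOf*{\diagFiPosWithLabel^\chi}$ each have two summands indexed by the two smoothings of the unique kink crossing; the sign-module bookkeeping collapses because there is only one local crossing and hence $\mu_{c^{\pm}}=0$.

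Next I would evaluate both composites on the unique degree-$0$ generator. By \eqref{eq:RI-morcob}, $\RMoveBar1^-=\BordROneNBar\otimes c^-_\dagger$ sends the generator to the $1$-smoothing summand of $\brcOf*{\diagFiNegWithLabel^\chi}$ labeled by $E_{\{c^-\}}$, with cobordism piece $\BordROneNBar$ and sign $+1$. Then $\widehat\Phi$ (from \cref{prop:delta-Phi}) removes $c^-$ via $\cra{c^-}$ (again sign $+1$) and precomposes $\BordPhiFst-\BordPhiSnd$, landing in the $0$-smoothing summand of $\brcOf*{\diagFiPosWithLabel^\chi}$—which locally coincides with the $1$-smoothing of $c^-$ as the picture $\diagSmoothV$. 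Meanwhile $\RMoveBar1^+$ produces $\BordROnePBarFst-\BordROnePBarSnd$ in the same summand. The theorem therefore reduces to the two cobordism identifications
\[
\BordPhiFst\circ\BordROneNBar = \BordROnePBarFst,
\qquad
\BordPhiSnd\circ\BordROneNBar = \BordROnePBarSnd
\]
in $\mathbf{Cob}_2(\partial^-D^\chi,\partial^+D^\chi)$. Each is a direct visual check: the cup $\BordROneNBar$ supplies the isolated disk component of the $1$-smoothed negative kink, and composing with the $1$-handle of $\Phi$ attached on the appropriate side recovers exactly the two surfaces that Bar-Natan uses to describe the positive R-I chain homotopy equivalence. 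Since the identifications hold at the level of $2$-bordisms up to boundary-preserving diffeomorphism, the relations \ref{relK:S}, \ref{relK:T}, \ref{relK:4Tu} are not required.

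I expect the main obstacle to be purely bookkeeping: aligning the orientations induced by the checkerboard coloring $\chi$ on the three diagrams, confirming that the signs contributed by $c^-_\dagger$ and $\cra{c^-}$ multiply to $+1$, and verifying that the identification of the ``$1$-smoothing of $c^-$'' with the ``$0$-smoothing of $c^+$'' as a common $\diagSmoothV$-type local picture is compatible with the boundary data of the cobordisms $\BordPhiFst$, $\BordPhiSnd$, $\BordROneNBar$, $\BordROnePBarFst$, $\BordROnePBarSnd$. Once these checks are made, the equality of the two pairs of cobordisms, and hence the strict commutativity of the triangle in the statement, is immediate.
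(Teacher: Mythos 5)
Your overall strategy matches the paper's: reduce via \cref{theo:singtang-comp} and \cref{theo:singtang-disj} to the three literal local pictures, observe that the sign modules collapse, and then try to identify the two composite cobordisms with the two terms of $\RMoveBar1^+$. The paper's proof, however, contains a step that you have explicitly dismissed: before composing with $\BordROneNBar$, the paper invokes the \ref{relK:4Tu} relation to rewrite
\[
\widehat\Phi \;=\; \BordFIPhiSndVar\otimes\cra{c^-}\;-\;\BordFIPhiFst\otimes\cra{c^-}
\]
and only then finds honest diffeomorphisms $\BordROneNBar\BordFIPhiSndVar\cong\BordROnePBarFst$ and $\BordROneNBar\BordFIPhiFst\cong\BordROnePBarSnd$.

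This is the gap in your argument. Your claim that the ``naked'' compositions satisfy $\BordPhiFst\circ\BordROneNBar=\BordROnePBarFst$ and $\BordPhiSnd\circ\BordROneNBar=\BordROnePBarSnd$ as $2$-bordisms up to diffeomorphism, and that the relations \ref{relK:S}, \ref{relK:T}, \ref{relK:4Tu} ``are not required,'' is not substantiated and almost certainly false as stated. In the local picture the cup creates a circle that is disjoint from the strand, and the two terms of $\Phi$ differ precisely in which of the available disks carries the extra $1$-handle. After composing with the birth, the terms $\BordROneNBar\BordPhiFst$ and $\BordROneNBar\BordPhiSnd$ and the two terms $\BordROnePBarFst$, $\BordROnePBarSnd$ of Bar-Natan's positive $\RMove1$ inverse carry the handle in different positions, so they do not pair up under diffeomorphism; only after applying \ref{relK:4Tu} (which, in particular, effects the sign swap recorded in \cref{rem:Phi-prime}: the rewritten form has the ``Snd''-type cobordism carrying the $+$ sign and the ``Fst''-type carrying $-$) do the resulting pairs become diffeomorphic. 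Since \ref{relK:4Tu} is a relation in the quotient $k\mathbf{Cob}_2/\mathcal L$ and not a diffeomorphism of surfaces, it cannot be bypassed by a ``direct visual check.'' To fix your proof, you should either carry out the \ref{relK:4Tu} rewrite as the paper does, or else verify that your two proposed cobordism identities actually hold, which would require you to draw out the surfaces and track the handle positions explicitly rather than assert the identification by fiat.
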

\begin{proof}
By virtue of \cref{theo:singtang-comp} and \cref{theo:singtang-disj}, we are reduced to the case where $D$, $D^{(-)}$, and $D^{(+)}$ are exactly the local pictures in \cref{fig:RI-pic}.
Hence, by the relation~\ref{relK:4Tu} in \cref{sec:univ-Kh:K}, it turns out that the genus-one morphism $\widehat\Phi$ is given as follows:
\[
\widehat\Phi=\BordFIPhiSndVar\otimes\cra{c^-}-\BordFIPhiFst\otimes\cra{c^-}
:\brcOf*{\diagFiNegWithLabel^\chi}\to\brcOf*{\diagFiPosWithLabel^\chi}[1]
\quad.
\]
Therefore, we can verify the result by observing the following diffeomorphisms of cobordisms:
\[
\BordROneNBar\BordFIPhiSndVar
\cong \BordROnePBarFst
\ ,\quad
\BordROneNBar\BordFIPhiFst
\cong \BordROnePBarSnd
\quad.
\]
\end{proof}

\begin{corollary}
\label{cor:Ubrc-FI}
Suppose $D$ is a singular tangle diagram one of whose double point is of the form in \cref{fig:dblpt-FI}.
\begin{figure}[t]
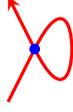

\centering
\diagFiSing
\caption{A double point in FI relation.}
\label{fig:dblpt-FI}
\end{figure}
Then, for every checkerboard coloring $\chi$, the universal bracket complex $\brcOf{D^\chi}$ is contractible, i.e.~the identity is null-homotopic.
Consequently, we also have $\KhOf{D}\simeq 0$.
\end{corollary}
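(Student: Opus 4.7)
The plan is to identify $\KhOf{D}$ with a mapping cone built from the genus-one morphism and then to show that this genus-one morphism is itself a chain homotopy equivalence, so that the cone is contractible.

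First I would apply \cref{cor:vassiliev-cone-UKH} to the distinguished FI double point, obtaining a chain homotopy equivalence
\[
\KhOf{D} \simeq \Cone\Bigl(\KhOf{D^{(-)}} \xrightarrow{\widehat\Phi} \KhOf{D^{(+)}}\Bigr),
\]
where $D^{(-)}$ and $D^{(+)}$ are the tangle diagrams obtained by resolving that double point to a negative (resp.\ positive) \emph{kinked} crossing, i.e.\ into the local pictures displayed in \cref{fig:RI-pic}. Since the cone of a chain homotopy equivalence is contractible, it suffices to show that the genus-one morphism $\widehat\Phi$ above is a chain homotopy equivalence.

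To prove this, I would localize using \cref{theo:singtang-comp} (and, if need be, \cref{theo:singtang-disj}). Write $D = D_1 \ast D_{\mathrm{loc}} \ast D_2$ where $D_{\mathrm{loc}}$ is precisely the local picture \diagFiSing{} at the FI double point. Under this decomposition, the two resolutions $D^{(\pm)}$ factor as $D_1 \ast D_{\mathrm{loc}}^{(\pm)} \ast D_2$ where $D_{\mathrm{loc}}^{(\pm)}$ are the local pictures shown in the middle and right of \cref{fig:RI-pic}. The composition isomorphism of \cref{theo:singtang-comp} transports the global genus-one morphism $\widehat\Phi$ to $\mathrm{id} \ast \widehat\Phi_{\mathrm{loc}} \ast \mathrm{id}$ (cf.\ the formula for $\widehat\Phi_{\mathfrak r,b}$ in the proof of \cref{theo:singtang-comp}). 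Since $(\blank)\ast(\blank)$ preserves chain homotopy equivalences in each variable, it remains to verify that the local $\widehat\Phi_{\mathrm{loc}}$ is a chain homotopy equivalence.

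For the local statement, I would directly invoke \cref{theo:Phi-comm-RI} with $D$, $D^{(-)}$, $D^{(+)}$ as in \cref{fig:RI-pic}: the strict commutativity gives $\widehat\Phi \circ \RMoveBar1^- = \RMoveBar1^+$ (with appropriate shift). Both $\RMoveBar1^-$ and $\RMoveBar1^+$ are the chain homotopy equivalences of \eqref{eq:RI-morcob} coming from the Reidemeister move of type~\RomNum1 (see \cref{thm:Bar-Natan} or~\cite{BarNatan2005}). Therefore $\widehat\Phi_{\mathrm{loc}}$, being the composition $\RMoveBar1^+ \circ (\RMoveBar1^-)^{-1}$ up to chain homotopy, is itself a chain homotopy equivalence. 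Combined with the previous paragraph this shows the global $\widehat\Phi$ is a chain homotopy equivalence, hence $\KhOf{D} \simeq \Cone(\widehat\Phi) \simeq 0$.

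I do not foresee a serious obstacle: the only subtle point is that \cref{theo:Phi-comm-RI} is stated on the universal \emph{bracket} complex rather than the Khovanov complex, so one has to track the degree shifts (by $n_-$ and $2 n_\times$ in \eqref{eq:singKh-def}) when passing between $\brcOf{\blank}$ and $\KhOf{\blank}$; these shifts are the same on both sides of $\widehat\Phi$, so the chain homotopy equivalence (and hence contractibility of the cone) survives intact. The final assertion $\KhOf{D} \simeq 0$ then follows since the identity on a contractible complex is null-homotopic.
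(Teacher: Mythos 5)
Your proposal is correct and follows essentially the same approach as the paper: identify $\brcOf{D^\chi}$ (equivalently, $\KhOf{D}$ up to shift) with a mapping cone of the genus-one morphism $\widehat\Phi$ via \cref{prop:vassiliev-cone}/\cref{cor:vassiliev-cone-UKH}, reduce to the local picture, and then observe from \cref{theo:Phi-comm-RI} that $\widehat\Phi\circ\RMoveBar1^-=\RMoveBar1^+$ forces $\widehat\Phi$ to be a chain homotopy equivalence since both $\RMoveBar1^\pm$ are. The paper's proof is just a terser version of the same argument.
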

\begin{proof}
The assertion is equivalent to saying that the genus-one morphism of the form
\[
\widehat\Phi:\brcOf*{\diagFiNegWithLabel^\chi}\to\brcOf*{\diagFiPosWithLabel^\chi}[1]
\]
is a chain homotopy equivalence, which is exactly \cref{theo:Phi-comm-RI}.
\end{proof}

\subsection{Examples}
\label{sec:UKHsing:examples}

We conclude the paper with some examples of our invariant for singular links.
Recall that Khovanov \cite{Khovanov2006} classified Frobenius algebras which give rise to link invariants.
Namely, for a fixed coefficient ring $k$, and for two elements $h,t\in k$, we define a $k$-algebra $C_{h,t}\coloneqq k[x]/(x^2-hx-t)$ with a Frobenius algebra structure given by
\[
\begin{gathered}
\Delta(1)\coloneqq 1\otimes x + x\otimes 1 - h x\otimes x
\ ,\quad \Delta(x)\coloneqq x\otimes x + t 1\otimes 1
\ ,\\
\varepsilon(1)\coloneqq 0\ ,\quad \varepsilon(x)\coloneqq 1
\quad.
\end{gathered}
\]
If we denote by $Z_{h,t}:\mathbf{Cob}_2(\varnothing,\varnothing)\to\mathbf{Mod}_k$ the associated TQFT, then it turns out that it induces a symmetric monoidal $k$-linear functor $\Cob(\varnothing,\varnothing)\to\mathbf{Mod}_k$, which is again written $Z_{h,t}$ by abuse of notation.
Then, for a singular link diagram $D$, we define
\[
\KhOf{D}_{h,t}
\coloneqq Z_{h,t}(\KhOf{D})
\quad.
\]
As a consequence of \cref{theo:Khsing-inv}, the homology of the complex $\KhOf{D}_{h,t}$ is an invariant of the singular link defined by $D$.

We compute the complex $\KhOf{\blank}_{h,t}$ for the following three diagrams:
\[
D^{(1)}=
\begin{tikzpicture}[baseline=(current bounding box.center)]
\draw[red,very thick,-stealth] (0.29,0) arc(0:180:1);
\draw[red,very thick,-stealth] (-1.71,0) arc(180:360:1);
\fill[white] (0,-.71) circle(.15);
\draw[red,very thick,-stealth] (-.29,0) arc(180:0:1);
\draw[red,very thick,-stealth] (1.71,0) arc(0:-180:1);
\fill[blue] (0,.71) circle(.1);
\end{tikzpicture}
\ ,\quad
D^{(2)}=
\begin{tikzpicture}[baseline=(current bounding box.center)]
\draw[red,very thick,-stealth] (-.29,0) arc(180:0:1);
\draw[red,very thick,-stealth] (1.71,0) arc(0:-180:1);
\fill[white] (0,-.71) circle(.15);
\draw[red,very thick,-stealth] (0.29,0) arc(0:180:1);
\draw[red,very thick,-stealth] (-1.71,0) arc(180:360:1);
\fill[blue] (0,.71) circle(.1);
\end{tikzpicture}
\ ,\quad
D^{(3)}=
\begin{tikzpicture}[baseline=(current bounding box.center)]
\draw[red,very thick,-stealth] (0.29,0) arc(0:180:1);
\draw[red,very thick,-stealth] (-1.71,0) arc(180:360:1);
\draw[red,very thick,-stealth] (-.29,0) arc(180:0:1);
\draw[red,very thick,-stealth] (1.71,0) arc(0:-180:1);
\fill[blue] (0,-.71) circle(.1);
\fill[blue] (0,.71) circle(.1);
\end{tikzpicture}
\]
Unwinding the definition, one sees that the chain complex $\brcOf{D^{(1)}}$ is isomorphic to the cochain complex associated to the following skew-commutative diagram in $\Cob(\varnothing,\varnothing)$:
\begin{equation}
\label{eq:hopf-sing-cplx}
\begin{tikzcd}
\diagHopfOO \ar[r,"\delta"] \ar[d,"\delta"'] & \diagHopflO \ar[r,"\Phi"] \ar[d,"\delta"] & \diagHopflO \ar[r,"\delta"] \ar[d,"\delta"] & \diagHopfOO \ar[d,"\delta"] \\
\diagHopfOl \ar[r,"-\delta"] & \diagHopfll \ar[r,"-\Phi"] & \diagHopfll \ar[r,"-\delta"] & \diagHopfOl
\end{tikzcd}
\quad,
\end{equation}
here the morphisms with label $\delta$ are saddles while the ones with $\Phi$ are the morphisms given in \eqref{eq:Phi}.
Applying the functor $Z_{h,t}$, we obtain a skew-commutative diagram below:
\begin{equation}
\label{eq:hopf-sing-frobcplx}
\begin{tikzcd}
C_{h,t}\otimes C_{h,t} \ar[r,"\mu"] \ar[d,"\mu"'] & C_{h,t} \ar[r,"0"] \ar[d,"\Delta"] & C_{h,t} \ar[r,"\Delta"] \ar[d,"\Delta"] & C_{h,t} \ar[d,"\mu"] \\
C_{h,t} \ar[r,"-\Delta"] & C_{h,t}\otimes C_{h,t} \ar[r,"-\varphi"] & C_{h,t}\otimes C_{h,t} \ar[r,"-\mu"] & C_{h,t}
\end{tikzcd}
\quad,
\end{equation}
here $\mu$ and $\Delta$ are the multiplication and the comultiplication of the Frobenius algebra $C_{h,t}$ respectively and $\varphi:C_{h,t}\otimes C_{h,t}\to C_{h,t}\otimes C_{h,t}$ is given by
\[
\varphi(a\otimes b)
\coloneqq a\otimes xb - ax\otimes b
\quad.
\]
It turns out that the bottom row of \eqref{eq:hopf-sing-frobcplx} is exact so that we obtain an isomorphism
\begin{equation}
\label{eq:UKH-singhopfn}
H^i\left(\KhOf*{D^{(1)}}_{h,t}\right)\cong
\begin{cases}
\operatorname{ker}\mu & i=-3\ ,\\
\operatorname{coker}\Delta & i=0\ ,\\
0 & \text{otherwise}\ .
\end{cases}
\end{equation}
A similar computation also shows that
\begin{equation}
\label{eq:UKH-singhopfp}
H^i\left(\KhOf*{D^{(2)}}_{h,t}\right)\cong
\begin{cases}
\operatorname{ker}\mu & i=-1\ ,\\
\operatorname{coker}\Delta & i=2\ ,\\
0 & \text{otherwise}\ .
\end{cases}
\end{equation}
Finally, by virtue of \cref{cor:vassiliev-cone-UKH}, we have a long exact sequence
\[
\cdots\to H^{i-1}\left(\KhOf{D^{(3)}}_{h,t}\right)
\to H^i\left(\KhOf{D^{(1)}}_{h,t}\right)
\xrightarrow{\widehat\Phi} H^i\left(\KhOf{D^{(2)}}_{h,t}\right)
\to H^i\left(\KhOf{D^{(3)}}_{h,t}\right)
\to \cdots
\]
of $k$-modules.
Note that, as seen in \eqref{eq:UKH-singhopfn} and \eqref{eq:UKH-singhopfp}, the cohomology groups of $\KhOf{D^{(1)}}_{h,t}$ and $\KhOf{D^{(2)}}_{h,t}$ are direct summands of the free $k$-module $C_{h,t}\otimes C_{h,t}$ and hence all projective.
Therefore, we obtain an isomorphism
\[
H^i\left(\KhOf*{D^{(3)}}_{h,t}\right)\cong
\begin{cases}
\operatorname{ker}\mu & i=-4\ ,\\
\operatorname{ker}\mu\oplus\operatorname{coker}\Delta & i=-1\ ,\\
\operatorname{coker}\Delta & i=2\ ,\\
0 & \text{otherwise}.
\end{cases}
\]

\section*{Acknowledgments}
We are deeply grateful to Professor Toshitake Kohno whose guidance and supports were invaluable during our study.
We would like to thank Professor Jun Murakami for his kind encouragement.
We are also indebted to Professor J.~Scott Carter for helping us communicate with expert researchers.
Professor Louis H.~Kauffman gave us constructive feedbacks in MSCS Quantum Topology Seminar.

\bibliographystyle{plain}
\bibliography{../reference}

\end{document}